\DeclareMathOperator{\Var}{\operatorname{Var}}
\newcommand{\norm}[1]{\left\lVert#1\right\rVert}
\newcommand{\stkout}[1]{\ifmmode\text{\sout{\ensuremath{#1}}}\else\sout{#1}\fi}
\newtheorem{thm}{Theorem}
\newtheorem{lem}{Lemma}
\newtheorem{ex}{Example}
\newtheorem{assum}{Assumption}
\newtheorem{prop}{Proposition}
\begin{document}
	
	\title{Minimax Rate Optimal Adaptive Nearest Neighbor Classification and Regression}
	
	\author{Puning Zhao and
		Lifeng Lai\thanks{Puning Zhao and Lifeng Lai are with Department of Electrical and Computer Engineering, University of California, Davis, CA, 95616. Email: \{pnzhao,lflai\}@ucdavis.edu. This work was supported by the National Science Foundation under grants CCF-17-17943, ECCS-17-11468, CNS-18-24553 and CCF-19-08258. This paper was presented in part at IEEE International Symposium on Information Theory, Paris, France, 2019 \cite{knnisit}.}
	}
	\maketitle
	
	\begin{abstract}
$k$ Nearest Neighbor (kNN) method is a simple and popular statistical method for classification and regression. For both classification and regression problems, existing works have shown that, if the distribution of the feature vector has bounded support and the probability density function is bounded away from zero in its support, the convergence rate of the standard kNN method, in which $k$ is the same for all test samples, is minimax optimal. On the contrary, if the distribution has unbounded support, we show that there is a gap between the convergence rate achieved by the standard kNN method and the minimax bound. To close this gap, we propose an adaptive kNN method, in which different $k$ is selected for different samples. Our selection rule does not require precise knowledge of the underlying distribution of features. The new proposed method significantly outperforms the standard one. We characterize the convergence rate of the proposed adaptive method, and show that it matches the minimax lower bound.
	\end{abstract}
\begin{IEEEkeywords}
	KNN, classification, regression
\end{IEEEkeywords}

	\IEEEpeerreviewmaketitle
	
	\section{Introduction}
In both classification and regression problems, our goal is to infer a function that predicts the target $Y$ from a feature vector $\mathbf{X}\in \mathbb{R}^d$. For classification problems, $Y$ takes values from a discrete set, while for regression problems, $Y$ takes values from a continuous set. Associated with both classification and regression problems, there exists a loss function $L(\hat{Y},Y)$, which measures the accuracy of the prediction $\hat{Y}$, and a risk function $R$, which is defined as the expectation of the loss function. If the joint distribution of $\mathbf{X}$ and $Y$ is known, we can decide an optimal prediction rule $\hat{Y}=g^*(\mathbf{X})$, which achieves the minimum risk named Bayes risk. However, in practice, such a joint distribution is unknown and the determination of the prediction rule is instead based on a finite number of identical and independently distributed (i.i.d) training samples $(\mathbf{X}_1,Y_1),\ldots, (\mathbf{X}_N,Y_N)$. As a result, it is inevitable for a classification or regression method to have some excess risk, which is defined as the gap between its risk and the Bayes risk. The problem of checking whether the excess risk of a classification or regression method converges to zero as the number of training samples increases, and characterizing the corresponding convergence rate, has attracted significant research interests \cite{biau2012analysis,blanchard2003rate,marron1983optimal,yang1999minimax,zhang2018rates}.

Among all classification and regression methods, $k$ Nearest Neighbor (kNN) method is a simple and popular one. It is easy to implement, and does not rely on any specific models, i.e., it can automatically adapt to supervised learning problems with arbitrary Bayes decision boundaries. For classification problems, given any test point $\mathbf{X}$, the kNN classifier assigns it with label $\hat{Y}$ determined by the majority vote from the labels of $k$ nearest neighbors of $\mathbf{X}$ among the training set. For regression problem, the mean of the observed labels of $k$ nearest neighbors of $\mathbf{X}$ is assigned to be the predicted label. The performances of kNN classification and regression have been extensively investigated. The kNN method with a fixed $k$, which depends neither on the sample size nor the position of each sample, was analyzed in \cite{cover1967nearest}. It was shown that under some weak assumptions, the risk will converge to a limit value that is higher than the Bayes risk. For larger fixed $k$, such a limit value is lower. However, as long as $k$ is fixed, the limit value can not reach the Bayes risk in general \cite{biau2015lectures}. An intuitive explanation is that, if $k$ is fixed, then the variance of predicted value does not converge to zero. Therefore, it is necessary to let $k$ grows with $N$. At the same time, the ratio $k/N$ needs to converge to zero, so that the bias can also be controlled simultaneously \cite{devroye1994strong,stone1977consistent}. Between these two extremes, there exists an optimal growth rate of $k$ over $N$, under which the best convergence rate is attained. 

Important progresses towards identifying the best growth rate of $k$ and finding the corresponding optimal convergence rate have been made in \cite{audibert2007fast,doring2017rate,chaudhuri2014rates,gyorfi1981rate,kpotufe2011k}. In particular, \cite{gyorfi1981rate} analyzes the convergence rate of local risk at a specific query point $\mathbf{X}=\mathbf{x}$, i.e., $\mathbb{E}[L(\hat{Y},Y)|\mathbf{X}=\mathbf{x}]$. Note that the total risk $R=\mathbb{E}[L(\hat{Y},Y)]$ is the expectation of the local risk, hence it can be shown that if the support of the distribution of $\mathbf{X}$ is bounded, and the underlying probability density function (pdf) $f(\mathbf{x})$ is bounded away from zero in its support, then the convergence rate of the local risk is of the same order as the convergence rate of the total risk \cite{biau2010rates,doring2017rate,biau2015lectures}. In this case, for both classification and regression problems, kNN method can achieve the best convergence rate in the minimax sense \cite{audibert2007fast,chaudhuri2014rates,tsybakov2009introduction,knnisit}.

However, for many common cases, the support of distribution of $\mathbf{X}$ is not bounded, or the pdf is not bounded away from zero. In these cases, we can no longer ensure that the convergence rate of the total risk is still of the same order as that of the local risk. For example, \cite{gadat2016classification} derives a bound of the convergence rate of the standard kNN classifier, which is slower than the minimax lower bound. This phenomenon can be explained by the fact that kNN distances tend to be large in the regions where the pdf of the features is low. Hence, the conditional distribution of the target at the test point can be quite different from those at its nearest neighbors. As a result, the inference using kNN method becomes less accurate. To improve the accuracy of kNN classification and regression, it is necessary to use adaptive $k$, which means that different values of $k$ are used for different samples. In particular, a ``sliced nearest neighbor'' method was proposed in \cite{gadat2016classification}. This method divides the support into several regions depending on the pdf of $\mathbf{X}$, and uses different $k$ in different regions. It was proved in \cite{gadat2016classification} that this new method attains the minimax convergence rate. However, this method requires us to know the underlying pdf. Although the pdf can be estimated from the training samples, the theoretical guarantee of the adaptive classifier is not established if we take the estimation error into consideration. If apart from a set of labeled training samples, we also have abundant unlabeled samples, which are many more than labeled samples, then it is possible to estimate the pdf with sufficient accuracy. We can then use the estimated density values to determine the optimal $k$. In this case, the problem is actually a semi-supervised learning problem, which was discussed in \cite{cannings2017local}. However, in supervised learning problems, those unlabeled data is not available. 

In this paper, we focus on both classification and regression problems with neither precise knowledge of the feature distribution nor any unlabeled data. We propose an adaptive kNN method that works for both classification and regression problems. We prove that the proposed adaptive kNN method is minimax rate optimal for a wide range of distributions for both classification and regression. Furthermore, unlike in~\cite{knnisit}, we show that the optimal choice of a key parameter depends only on the dimension of the feature. Hence, the proposed adaptive kNN method does not involve too much parameter tuning. 
In particular, we make the following contributions. 

Firstly, we derive a bound for the convergence rate of the standard kNN classification and regression, which uses the same $k$ for all test samples, under assumptions that are more general than those discussed in existing studies such as \cite{chaudhuri2014rates} and \cite{gadat2016classification}. In particular, we introduce parameters to describe the properties of the distribution of the feature vector, such as tail parameters, as well as parameters to describe the distribution of labels, such as margin parameters. Our bound depends on these parameters and is applicable to a broader class of distributions than those derived in the existing literatures. The derived bound recovers the bounds in the existing studies \cite{audibert2004classification,chaudhuri2014rates,doring2017rate,kohler2007rate}, although some assumptions are slightly different. Furthermore, we provide a lower bound for the excess risk of the standard kNN method over a set of distributions. We show that the lower bound and the upper bound almost match, therefore our bounds are tight and can not be further improved.

Secondly, we derive a minimax lower bound over all classification and regression methods that do not have the information of the underlying regression function, under the same assumptions as mentioned above. The result indicates that, if the distribution has tails, then there exists a gap between the convergence rate of the excess risk of the standard kNN method and the minimax convergence rate. Hence, the standard kNN classification and regression are not optimal under these scenarios.

Thirdly, to close the gap identified above, we propose and analyze a new adaptive kNN method, in which we use different value of $k$ for different test points. Our approach is based solely on labeled training samples, without requiring the precise knowledge of pdf $f(\mathbf{x})$. In particular, for a given test sample, we select $k$ as an increasing function of the number of training samples that fall in a fixed radius ball centered at this test sample. The purpose of our choice of $k$ is to achieve a desirable bias and variance tradeoff. This is motivated by the observation that if $k$ increases, the kNN distances will increase and hence the bias will also increase. On the other hand, the variance decreases with $k$. As a result, if there are many training samples around the test point $\mathbf{x}$, then we can safely use a larger $k$ to reduce the variance, without worrying too much about the bias, since the kNN distances are still not large when the training samples are dense around the test point. On the contrary, if there are fewer training samples around the test point, then we need to use a smaller $k$ to control the bias. Building on this intuition, we carefully design a selection rule of $k$, which is an increasing function of the number of samples in the fixed radius neighborhood of each testing point. Intuitively, the number of samples in the fixed radius neighborhood can be viewed as an estimation of density around the test point. However, here we do not expect to have a consistent density estimation, since the bias of this density estimate does not decay with the sample size $N$. Nevertheless, our method can still bridge the gap mentioned above, and achieve the minimax optimal convergence rate, despite that the density estimation using a fixed radius nearest neighbor search is not consistent. Furthermore, as will be clear in the sequel, our proposed method does not need too much parameter tuning. To the best of our knowledge, our method is the first nonparametric classification and regression method that is proven to be rate optimal for feature distributions with both bounded and unbounded support. 

The remainder of this paper is organized as follows. In Section~\ref{sec:statement}, we present the precise statement of the classification and regression problem and our proposed adaptive kNN method. The theoretical analyses for classification and regression problems are presented in Section~\ref{sec:cla} and \ref{sec:reg}, respectively. In Section~\ref{sec:num}, we conduct numerical experiments to compare the performance of our new proposed adaptive kNN with that of the standard one, for both classification and regression problems. Finally, we offer concluding remarks in Section~\ref{sec:conc}. 

\section{Problem Formulation and Proposed Method}\label{sec:statement}

For classification problems, we let the feature vector $\mathbf{X}$ and target $Y$ take values in $\mathbb{R}^d$ and $\{-1,1\}$, respectively. $(\mathbf{X},Y)$ follows an unknown joint distribution. Denote $f(\mathbf{x})$ as the pdf of $\mathbf{X}$. We use 0-1 loss function
\begin{eqnarray}
L(\hat{Y},Y)=\left\{
\begin{array}{ccc}
0 &\text{if} & \hat{Y}=Y\\
1 &\text{if} &\hat{Y}\neq Y
\end{array}
\right..
\end{eqnarray}
With this loss function, the risk of a classifier $\hat{Y}=g(\mathbf{X})$ is 
\begin{eqnarray}
R(g)=\mathbb{E}[L(Y,\hat{Y})]=\text{P}(g(\mathbf{X})\neq Y).
\label{eq:risk}
\end{eqnarray}
Define function $\eta$ as
\begin{eqnarray}
\eta(\mathbf{x}):=\mathbb{E}[Y|\mathbf{X}=\mathbf{x}]=\text{P}(Y=1|\mathbf{X}=\mathbf{x})-\text{P}(Y=-1|\mathbf{X}=\mathbf{x}).
\end{eqnarray}
It can be shown that the Bayes optimal classification rule is given by \cite{doring2017rate}:
\begin{eqnarray}
g^*(\mathbf{x})=\text{sign}(\eta(\mathbf{x})),
\end{eqnarray}
and the corresponding risk, called Bayes risk, is
\begin{eqnarray}
R^*=\text{P}(g^*(\mathbf{X})\neq Y)=\mathbb{E}\left[\frac{1-|\eta(\mathbf{X})|}{2}\right].
\label{eq:bayesrisk}
\end{eqnarray}
From \eqref{eq:risk} and \eqref{eq:bayesrisk}, it can be shown that the excess risk $R-R^*$ takes the following form:
\begin{eqnarray}
R-R^*=\mathbb{E}[\mathbf{1}(g(\mathbf{X})\neq g^*(\mathbf{X}))|\eta(\mathbf{X})|],
\end{eqnarray}
in which $\mathbf{1}(\cdot)$ is the indicator function.

For regression problems, the target $Y$ can take value in $\mathbb{R}$. In this paper, we assume that $Y$ has the following relationship with $\mathbf{X}$:
\begin{eqnarray}
Y=\eta(\mathbf{X})+\epsilon,
\end{eqnarray}
in which $\eta$ is the true underlying regression function and $\epsilon$ denotes the noise that satisfies $\mathbb{E}[\epsilon|\mathbf{X}=\mathbf{x}]=0$ for all $\mathbf{x}$. We use $\ell_2$ loss to evaluate the regression accuracy: $L(\hat{Y},Y)=(\hat{Y}-Y)^2$. With this loss function, the risk of regression function $\hat{Y}=g(\mathbf{X})$ is
\begin{eqnarray}
R=\mathbb{E}[(g(\mathbf{X})-Y)^2].
\label{eq:risk-reg}
\end{eqnarray}
Under $\ell_2$ loss, the Bayes optimal regression rule is given by $g^*(\mathbf{x})=\eta(\mathbf{x})$, and the corresponding Bayes risk is $R^*=\mathbb{E}[(\eta(\mathbf{X})-Y)^2]=\mathbb{E}[\epsilon^2]$. Then the excess risk can be expressed as
\begin{eqnarray}
R-R^*=\mathbb{E}[(g(\mathbf{X})-\eta(\mathbf{X}))^2].
\end{eqnarray}

In practice, for both classification and regression problems, $f(\mathbf{x})$ and $\eta(\mathbf{x})$ are unknown. Instead, the prediction rule is based on $N$ i.i.d samples $(\mathbf{X}_i,Y_i)$, $i=1,\ldots,N$, which are all drawn from the joint distribution of $\mathbf{X}$ and $Y$. Since for any classification or regression method, $R\geq R^*$ always holds, we evaluate their performance using the excess risk $R-R^*$. In particular, we characterize the convergence rate, i.e. the rate at which the excess risk goes to zero. 

\subsection{The standard kNN rules}

The standard kNN classification rule has the following form:
\begin{eqnarray}
g(\mathbf{x})=\text{sign}\left(\hat{\eta}(\mathbf{x})\right),
\label{eq:stdcls}
\end{eqnarray}
in which 
\begin{eqnarray}
	\hat{\eta}(\mathbf{x})=\frac{1}{k}\sum_{i=1}^k Y^{(i)}\label{eq:etahat}
\end{eqnarray}
with $Y^{(i)}$ being the target value corresponding to $\mathbf{X}^{(i)}$ and $\mathbf{X}^{(i)}$ being the $i$-th nearest neighbor of $\mathbf{x}$. The distance of $\mathbf{X}_i$ and $\mathbf{X}_j$ is $\norm{\mathbf{X}_i-\mathbf{X}_j}$, in which $\norm{\cdot}$ can be any norm. In the standard kNN classification, $k$ is the same for all samples. 

The standard kNN regression rule is
\begin{eqnarray}
g(\mathbf{x})=\frac{1}{k}\sum_{i=1}^k Y^{(i)},
\label{eq:stdreg}
\end{eqnarray}
with $Y^{(i)}$ defined similarly as the target value of the $i$-th nearest neighbor of $\mathbf{x}$. Again, here $k$ is the same for all samples. 

\subsection{Proposed adaptive kNN method}\label{sec:proposed}

Our proposed adaptive kNN classification and regression methods has the same form as \eqref{eq:stdcls} and \eqref{eq:stdreg}. However, instead of using the same $k$ for all testing samples, we use a sample dependent $k$. In particular, for a given query point $\mathbf{x}$, let $B(\mathbf{x},A)$ be a ball centered at $\mathbf{x}$ with a fixed radius $A$, in which the norm used for this radius is the same as the norm for kNN distances. We select $k$ as:
\begin{eqnarray}
k=\left\lfloor Kn^q\right\rfloor+1,
\label{eq:kdef}
\end{eqnarray}
in which $0<q<1$, and
\begin{eqnarray}
n=\sum_{i=1}^N \mathbf{1}(\mathbf{X}_i\in B(\mathbf{x},A))
\label{eq:ndef}
\end{eqnarray}
is the number of training samples falling in $B(\mathbf{x},A)$. $K,A,q$ are three design parameters. In Sections \ref{sec:cla} and \ref{sec:reg}, we will show that for both classification and regression problems, the parameters $K$ and $A$ do not impact the convergence rates of the excess risk, as long as $K$ and $A$ are fixed with respect to sample size $N$. $q$ will impact the convergence rate. We will show that the optimal $q$, under which the best convergence rate is achieved, is $4/(d+4)$.  This optimal choice of $q$ depends only on the dimension $d$ of the random variable $\mathbf{X}$. Hence, the proposed method does not involve too much parameter tuning.

Our design is motivated by the observation that in the regions where $f(\mathbf{x})$ is small, the kNN distances are large, thus the values of the underlying regression function $\eta(\mathbf{x})$ can be quite different at these $k$ points. As a result, the inference of $\eta(\mathbf{x})$ from these $k$ neighbors may not be accurate. To solve this problem, we use smaller $k$ at the tail of distribution. \cite{gadat2016classification} uses similar ideas, but the method to choose $k$ in \cite{gadat2016classification} needs the exact value of $f(\mathbf{x})$. In particular, the scheme in \cite{gadat2016classification} divides the support of distribution into several regions based on the value of pdf. Each region corresponds to a different choice of $k$, which is then used to predict the target value of a test point, if it falls on this region. Nevertheless, in practice, $f(\mathbf{x})$ is unknown. In our algorithm, we use \eqref{eq:ndef} as a proxy to measure $f(\mathbf{x})$, and use \eqref{eq:kdef} to adaptively set the value of $k$. It is easy to see from \eqref{eq:kdef} and \eqref{eq:ndef} that $n$ (and hence $k$) tends to be smaller in regions with smaller density, and vise versa. The purpose of adding $1$ to $\lfloor Kn^q\rfloor$ in \eqref{eq:kdef} is to ensure that $k$ is at least $1$. Our method shares some similarity with \cite{cannings2017local}, which uses the result of kernel density estimate to determine $k$. However, \cite{cannings2017local} requires a sufficiently large number of unlabeled data to ensure that the estimated density function is sufficiently close to the real density function, so that the adaptive kNN algorithm converges as fast as the case in which $f(\mathbf{x})$ is known and the selection of $k$ is based on the real $f(\mathbf{x})$. On the contrary, our method does not require unlabeled data, and we do not hope to have an accurate estimation of the density. In fact, since the radius is fixed, the bias of density estimation using \eqref{eq:ndef} will not converge to zero as sample size $N$ increases. Nevertheless, despite that the density estimation is not consistent, we can still show that our method is minimax rate optimal. 
\section{Classification}\label{sec:cla}
In this section, we focus on classification problems. We begin with the analysis of the convergence rate of the excess risk of the standard kNN classification. We then derive a minimax lower bound. Finally, we characterize the convergence rate of our adaptive method and show that our new method is minimax optimal. 

The analysis of the classification risk is based on the following assumptions:
\begin{assum}\label{ass:basic}
	There exist finite constants $C_a, C_b, C_c$ and $\alpha>0$, $\beta>0$, such that:
	
	(a) For all $t>0$,
	\begin{eqnarray}
	\text{P}(0<|\eta(\mathbf{X})|\leq t)\leq C_at^\alpha;
	\end{eqnarray}
	
	(b) For all $t>0$,
	\begin{eqnarray}
	\text{P}(f(\mathbf{X})\leq t)\leq C_b t^\beta;
	\end{eqnarray}
	
	(c) For an arbitrary $r>0$ and any $\mathbf{x}$ in the support of $f(\mathbf{x})$,
	\begin{eqnarray}
	|\eta(B(\mathbf{x},r))-\eta(\mathbf{x})|\leq C_c r^2,
	\label{eq:smooth}
	\end{eqnarray}
	in which $\eta(B(\mathbf{x},r)):=\mathbb{E}[Y|\mathbf{X}\in B(\mathbf{x},r)]$;
	
	(d) $\exists D>0$ such that
	\begin{eqnarray}
	\text{P}(B(\mathbf{x},r))\geq C_df(\mathbf{x}) V(B(\mathbf{x},r))
	\end{eqnarray}
	for all $\mathbf{x}$ and $0<r<D$, in which $B(\mathbf{x},r)$ is a ball centered at $\mathbf{x}$ with radius $r$, $V(B(\mathbf{x},r))$ is the volume of $B(\mathbf{x},r)$, and $\text{P}(B(\mathbf{x},r))$ is the probability mass of $B(\mathbf{x},r)$.
	
\end{assum}

The assumptions here share some similarities with previous work \cite{chaudhuri2014rates,gadat2016classification}. In particular, Assumption 1 (a) is called margin assumption, which controls the size of the region near the Bayes decision boundary. This assumption is reasonable because misclassification is easier to occur at the position where $\text{P}(Y=1|\mathbf{x})$ and $\text{P}(Y=-1|\mathbf{x})$ are close. The same assumption was used in \cite{chaudhuri2014rates,doring2017rate,gadat2016classification}. Assumption 1 (b) controls the tail of the distribution. If the distribution of feature vector $\mathbf{X}$ has unbounded support, then the maximum $\beta$ such that Assumption 1 (b) holds for constant $C_b$ is at most 1. On the contrary, if the support is bounded, then the maximum $\beta$ is at least 1. Furthermore, if the density is bounded away from zero, then Assumption 1 (b) holds for arbitrarily large $\beta$. Assumption 1 (c) describes the smoothness of the regression function $\eta(\mathbf{x})$. A traditional quantity that evaluates the smoothness of functions is the H{\"o}lder parameter. As discussed in \cite{gadat2016classification} (Remark 2.1), for the standard kNN algorithm, it is not suitable to assume that the smoothness index is greater than $1$. Here, we use \eqref{eq:smooth} to replace the H{\"o}lder condition, so that it is possible to impose an assumption that is approximately the same as requiring the second-order smoothness of $\eta$. 
Assumption 1 (d) is the minimum probability mass assumption, which was already used in existing works \cite{gadat2016classification,cannings2017local}.

The following proposition provides sufficient conditions for Assumption 1 (b) and (c).
\begin{prop}\label{prop:smoothness}
	(A) If the $\tau$-th moment of $\mathbf{X}$ is bounded, i.e., $\mathbb{E}[||\mathbf{X}||^\tau]<\infty$, then for any $\beta<\tau/(d+\tau)$, there exists a constant $C_b$ such that Assumption \ref{ass:basic} (b) holds.
	
	(B) If Assumption \ref{ass:basic} (d) holds, $\eta(\mathbf{x})$ has bounded Hessian, i.e., there exists a constant $C_H$, such that $||\nabla^2 \eta(\mathbf{x})||_{op}\leq C_H$, in which $||\cdot||_{op}$ denotes the operator norm, and there exists a constant $D'$, such that
	\begin{eqnarray}
	\underset{\mathbf{u}\in B(\mathbf{x},D')}{\sup} \frac{\norm{\nabla \eta(\mathbf{x})}_2\norm{\nabla f(\mathbf{u})}_2}{f(\mathbf{x})}\leq C_0,
	\label{eq:condition2}
	\end{eqnarray} 
	in which $C_0$ is a constant, then Assumption \ref{ass:basic} (c) holds.
\end{prop}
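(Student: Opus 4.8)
The two parts are independent, so I would treat them in turn.

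\textbf{Part (A).} The plan is to bound the low-density mass through a volume-versus-tail tradeoff. Writing $\text{P}(f(\mathbf{X})\leq t)=\int_{\{f(\mathbf{u})\leq t\}}f(\mathbf{u})d\mathbf{u}$, I would split the integration domain according to whether $\norm{\mathbf{u}}\leq r$ or $\norm{\mathbf{u}}>r$ for a threshold $r$ to be chosen later. On the bounded part, since $f(\mathbf{u})\leq t$ there, the contribution is at most $t\cdot V(B(\mathbf{0},r))=c_d t r^d$, where $c_d$ is the volume of the unit ball. On the unbounded part, Markov's inequality applied to the $\tau$-th moment gives $\text{P}(\norm{\mathbf{X}}>r)\leq \mathbb{E}[\norm{\mathbf{X}}^\tau]/r^\tau$. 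Combining the two pieces, $\text{P}(f(\mathbf{X})\leq t)\leq c_d t r^d+\mathbb{E}[\norm{\mathbf{X}}^\tau]r^{-\tau}$, and minimizing the right-hand side over $r$ (the two terms balance at $r\propto t^{-1/(d+\tau)}$) yields $\text{P}(f(\mathbf{X})\leq t)\leq C t^{\tau/(d+\tau)}$ for a constant $C$ depending only on $d$, $\tau$ and the moment bound. Since this holds with exponent exactly $\tau/(d+\tau)$, it holds for every $\beta\leq\tau/(d+\tau)$ after adjusting the constant: for $t\leq 1$ we use $t^\beta\geq t^{\tau/(d+\tau)}$, and for $t>1$ we use $\text{P}(f(\mathbf{X})\leq t)\leq 1$. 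This in particular covers every $\beta<\tau/(d+\tau)$.

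\textbf{Part (B).} Using $\mathbb{E}[Y\mid\mathbf{X}]=\eta(\mathbf{X})$, I would first rewrite
\begin{eqnarray}
\eta(B(\mathbf{x},r))-\eta(\mathbf{x})=\frac{1}{\text{P}(B(\mathbf{x},r))}\int_{B(\mathbf{x},r)}(\eta(\mathbf{u})-\eta(\mathbf{x}))f(\mathbf{u})d\mathbf{u}.\nonumber
\end{eqnarray}
A second-order Taylor expansion of $\eta$ about $\mathbf{x}$ gives $\eta(\mathbf{u})-\eta(\mathbf{x})=\nabla\eta(\mathbf{x})^T(\mathbf{u}-\mathbf{x})+\frac{1}{2}(\mathbf{u}-\mathbf{x})^T\nabla^2\eta(\mathbf{z})(\mathbf{u}-\mathbf{x})$ for some $\mathbf{z}$ on the segment from $\mathbf{x}$ to $\mathbf{u}$, which lies in $B(\mathbf{x},r)$ by convexity of norm balls. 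The quadratic remainder is controlled directly by the bounded-Hessian hypothesis: $|(\mathbf{u}-\mathbf{x})^T\nabla^2\eta(\mathbf{z})(\mathbf{u}-\mathbf{x})|\leq C_H r^2$, and since $\int_{B(\mathbf{x},r)}f(\mathbf{u})d\mathbf{u}=\text{P}(B(\mathbf{x},r))$, this term contributes at most $\frac{1}{2}C_H r^2$ after dividing by the mass.

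The linear term is the crux, and the main obstacle. The idea is that the portion carried by the constant density value cancels by symmetry: $f(\mathbf{x})\nabla\eta(\mathbf{x})^T\int_{B(\mathbf{x},r)}(\mathbf{u}-\mathbf{x})d\mathbf{u}=0$, because the ball $B(\mathbf{0},r)$ is symmetric about the origin while $\mathbf{u}-\mathbf{x}$ is odd. Hence only the density variation survives, so I would write $f(\mathbf{u})=f(\mathbf{x})+(f(\mathbf{u})-f(\mathbf{x}))$ and bound $|f(\mathbf{u})-f(\mathbf{x})|\leq(\sup_{\mathbf{v}\in B(\mathbf{x},r)}\norm{\nabla f(\mathbf{v})}_2)r$ by the mean value theorem for $r<D'$. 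This makes the numerator of the linear term at most $\norm{\nabla\eta(\mathbf{x})}_2\,(\sup_{\mathbf{v}\in B(\mathbf{x},r)}\norm{\nabla f(\mathbf{v})}_2)\,r^2\,V(B(\mathbf{x},r))$, and dividing by $\text{P}(B(\mathbf{x},r))\geq C_d f(\mathbf{x})V(B(\mathbf{x},r))$ from Assumption \ref{ass:basic} (d) cancels the volume and produces exactly the ratio appearing in \eqref{eq:condition2}, which then bounds the term by $(C_0/C_d)r^2$. Adding the two pieces gives $|\eta(B(\mathbf{x},r))-\eta(\mathbf{x})|\leq(C_0/C_d+C_H/2)r^2$ for all $r<\min(D,D')$. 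Finally, for $r\geq\min(D,D')$ I would invoke $|\eta(\cdot)|\leq 1$, so that the left side is at most $2\leq (2/\min(D,D')^2)\,r^2$, absorbing the large-$r$ regime into a constant multiple of $r^2$ and yielding a single $C_c$ valid for all $r>0$. Everything hinges on the symmetry cancellation of the leading density value and on matching the residual density gradient against the minimum-mass bound so that the factor $f(\mathbf{x})$ in the denominator is precisely what \eqref{eq:condition2} is designed to tame; the remaining norm-equivalence factors (from balls in an arbitrary norm against the $\ell_2$ gradient bounds) only affect constants.
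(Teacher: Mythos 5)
Your proof of part (B) is correct and follows essentially the same route as the paper's: second-order Taylor expansion of $\eta$ about $\mathbf{x}$, cancellation of the $f(\mathbf{x})\nabla\eta(\mathbf{x})^T(\mathbf{u}-\mathbf{x})$ contribution by symmetry of the ball, a mean-value bound on $f(\mathbf{u})-f(\mathbf{x})$ matched against condition \eqref{eq:condition2}, and Assumption \ref{ass:basic} (d) to cancel the factor $f(\mathbf{x})V(B(\mathbf{x},r))$ after dividing by $\text{P}(B(\mathbf{x},r))$. You go slightly beyond the paper in two respects, both to your credit. First, the paper's Appendix A only establishes the bound for $r<\min(D,D')$ and is silent on larger radii even though Assumption \ref{ass:basic} (c) is stated for arbitrary $r>0$; your observation that $|\eta|\leq 1$ lets you absorb the regime $r\geq\min(D,D')$ into the constant closes that gap cleanly. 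Second, for part (A) the paper gives no proof at all, deferring to Appendix F of an external reference; your volume-versus-tail splitting combined with Markov's inequality is a correct, self-contained argument, and after balancing the two terms it actually delivers the endpoint exponent $\tau/(d+\tau)$, which is strictly stronger than the claimed range $\beta<\tau/(d+\tau)$ and covers it after the constant adjustment you describe.
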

For the proof of Proposition \ref{prop:smoothness} (A), please refer to Appendix F in \cite{zhao2018analysis}. For (B), the proof is shown in Appendix \ref{sec:smoothness}.

The condition in Proposition \ref{prop:smoothness} (A) shows that our tail assumption is weaker than assuming the boundedness of moments of feature vector $\mathbf{X}$. Roughly speaking, the condition in Proposition \ref{prop:smoothness} (B) requires that the gradient of density function decays with the same rate as the density itself. Similar assumption was already used in \cite{berrett2019efficient} and \cite{cannings2017local}. For example, if $\mathbf{X}$ follows Laplace distribution and $\eta$ is sinusoidal, then Assumption \ref{ass:basic} (c) is satisfied. 


\subsection{Convergence rate of the standard kNN classifier}
Now under Assumption \ref{ass:basic}, we provide a bound of the convergence rate of the standard kNN classifiers, which select the same value of $k$ for every $\mathbf{x}$. In the following analysis, the kNN distance is based on metric $d(\mathbf{x}_1,\mathbf{x}_2)=||\mathbf{x}_2-\mathbf{x}_1||$, in which $||\cdot||$ is an arbitrary norm. The convergence rate depends on the growth rate of $k$ over sample size $N$. In the following theorem, we show the best convergence rate when such a growth rate is optimally selected.

\begin{thm}\label{thm:standard}
	Under Assumption \ref{ass:basic} (a)-(d), if $k$ is optimally selected, then the convergence rate of excess risk is
	\begin{eqnarray}
	R-R^*=\left\{
	\begin{array}{ccc}
	\mathcal{O}\left(N^{-\min\left\{ \frac{\beta(\alpha+1)}{2\beta+\alpha+1},\frac{2\beta(\alpha+1)}{\beta d+2(\alpha+2\beta)} \right\}}\right) &\text{if} & \beta\neq \frac{2}{d};\\
	\mathcal{O}\left(N^{-\frac{\beta(\alpha+1)}{2\beta+\alpha+1}}\ln N\right) &\text{if} &\beta=\frac{2}{d}.
	\end{array}
	\right.
	\label{eq:std-ub}
	\end{eqnarray}
	The above rate is attained if
	\begin{eqnarray}
	k\sim \left\{
	\begin{array}{ccc}
	N^{\frac{2\beta}{2\beta+\alpha+1}} &\text{if} &\beta\leq \frac{2}{d};\\
	N^\frac{4\beta}{2\alpha+\beta(d+4)} &\text{if} &\beta>\frac{2}{d}.
	\end{array}
	\right.
	\label{eq:kstd}
	\end{eqnarray}
	Moreover, this bound is almost tight. In particular, denote $\mathcal{S}$ as the set of all pairs $(f,\eta)$ such that Assumption \ref{ass:basic} (a)-(d) hold with sufficiently large $C_a$, $C_b$, $C_c$, then for the standard kNN classification, 
	\begin{eqnarray}
\underset{k}{\inf}	\underset{(f,\eta)\in \mathcal{S}}{\sup} (R-R^*)=\Omega\left(N^{-\min\left\{ \frac{\beta(\alpha+1)}{2\beta+\alpha+1},\frac{2\beta(\alpha+1)}{\beta d+2(\alpha+2\beta)} \right\}}\right).
\label{eq:std-lb}
	\end{eqnarray}
\end{thm}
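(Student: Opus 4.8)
The strategy is to decompose the excess risk using a pointwise analysis and then integrate over the feature space. For a fixed query point $\mathbf{x}$, the excess risk contribution comes from the event that $\text{sign}(\hat\eta(\mathbf{x}))\neq\text{sign}(\eta(\mathbf{x}))$, which occurs only if $\hat\eta(\mathbf{x})$ deviates from $\eta(\mathbf{x})$ by at least $|\eta(\mathbf{x})|$. I would write $\hat\eta(\mathbf{x})-\eta(\mathbf{x})$ as the sum of a bias term and a variance term. The bias is controlled by Assumption~\ref{ass:basic}~(c): conditioned on the $k$NN radius $R_k(\mathbf{x})$, the expected value of $\hat\eta$ differs from $\eta(\mathbf{x})$ by at most $C_c R_k(\mathbf{x})^2$. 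The variance term, after conditioning on the neighbor locations, is a centered average of $k$ bounded random variables, so it is $\mathcal{O}(1/k)$ and is sharply controlled by a Hoeffding/Bernstein bound. The key geometric input is that the $k$NN radius satisfies $R_k(\mathbf{x})\approx (k/(N f(\mathbf{x})))^{1/d}$, which follows from Assumption~\ref{ass:basic}~(d) and a concentration argument for the number of samples in a ball. Thus the pointwise bias scales like $(k/(Nf(\mathbf{x})))^{2/d}$.

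Having a pointwise bound of the form $\text{P}(\text{misclassify}\mid\mathbf{x})\lesssim \exp(-c k|\eta(\mathbf{x})|^2)+\mathbf{1}(\text{bias}>|\eta(\mathbf{x})|/2)$, I would multiply by $|\eta(\mathbf{x})|$ (the excess risk weight) and integrate. Here the two assumptions on the distribution enter: the margin assumption~(a) with exponent $\alpha$ governs how the integral behaves near the decision boundary $\eta=0$, and the tail assumption~(b) with exponent $\beta$ governs the contribution from low-density regions, where $R_k(\mathbf{x})$ is large and the bias dominates. Splitting the integral according to whether the variance term or the bias term dominates, and optimizing the free parameter $k$ over $N$, produces the two competing exponents $\beta(\alpha+1)/(2\beta+\alpha+1)$ and $2\beta(\alpha+1)/(\beta d+2(\alpha+2\beta))$ inside the minimum. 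The crossover $\beta=2/d$ is exactly where the two regimes balance, which is why the choice of $k$ in \eqref{eq:kstd} bifurcates there and a logarithmic factor appears in the boundary case.

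\textbf{The plan for the lower bound.}

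For \eqref{eq:std-lb} I would construct an explicit worst-case family $(f,\eta)\in\mathcal{S}$. The idea is to place many small "bumps" of the regression function, of carefully chosen height $\sim\delta$ and width, located in a region where the density $f$ is set to a small value so as to saturate the tail assumption~(b), while distributing the bumps so that their total measure saturates the margin assumption~(a). On each bump, the fixed-$k$ classifier cannot simultaneously control bias and variance: if $k$ is large the bias from the smoothing radius washes out the bump, and if $k$ is small the variance forces a constant misclassification probability. By a standard two-point (or Assouad-type) argument on each bump, one shows that whatever single global $k$ the classifier picks, a positive fraction of the bumps are misclassified with constant probability, yielding an excess-risk contribution matching the claimed rate. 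The main obstacle, and the step requiring the most care, is verifying that the constructed family genuinely lies in $\mathcal{S}$ — that is, that the bump heights, widths, and the assigned low-density profile simultaneously satisfy all of Assumptions~\ref{ass:basic}~(a)--(d) with the prescribed constants while still forcing the lower-bound rate; balancing these constraints against each other is what pins down the exponents and demonstrates that the upper bound cannot be improved.
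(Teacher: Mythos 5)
Your plan follows essentially the same route as the paper: the upper bound via a bias--variance decomposition with the kNN radius concentration $\rho \lesssim (k/(Nf(\mathbf{x})))^{1/d}$, a Hoeffding bound for the sign-flip probability, and a partition of the feature space into a high-density/large-margin region, a near-boundary region handled by the margin assumption, and low-density tail regions handled by the tail assumption; and the lower bound via explicit low-density cube/bump constructions that saturate Assumptions~\ref{ass:basic}~(a) and (b) and force the fixed-$k$ classifier to fail either through bias or variance. The only cosmetic difference is that the paper evaluates the kNN classifier directly on its constructions (no Assouad reduction is needed since $k$ is the only free parameter of the estimator) and uses three separate constructions whose maxima combine, but your single parametrized construction would yield the same family of bounds.
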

\color{black}
\begin{proof}
	(Outline) For the proof of our upper bound, let $\delta$ and $\Delta$ be two parameters to be determined, we divide the support into four regions and analyze each region separately.
	
	\begin{itemize}
		\item $S_1=\{\mathbf{x}|f(\mathbf{x})\geq N^{-\delta}, |\eta(\mathbf{x})|>2\Delta \}$. In this region, the pdf is larger than threshold $N^{-\delta}$, and the underlying regression function at $\mathbf{x}$ is at least $(2\Delta)$-far away from zero. For any test point $\mathbf{x}$ in this region, the label prediction of the standard kNN classifier is different from the prediction of Bayes classifier only if the estimated regression function $\hat{\eta}(\mathbf{x})$ has different sign with the real regression function $\eta(\mathbf{x})$, which happens with a decreasing probability as the sample size $N$ increases. The excess risk can then be bounded by giving a bound of this probability. 
		\item $S_2=\{\mathbf{x}|f(\mathbf{x})\geq N^{-\delta}, |\eta(\mathbf{x})|\leq 2\Delta \}$. In this region, the pdf is larger than $N^{-\delta}$, but the underlying regression function is close to zero. Therefore, $\text{P}(Y=1|\mathbf{x})$ is close to $\text{P}(Y=-1|\mathbf{x})$, which indicates that the inherent randomness is large. Therefore, the risk of both the kNN classifier and the Bayes optimal classifier are large in this region. The conditional excess risk in $S_2$ can be bounded by $2\Delta$.  
		\item $S_3=\left\{ \mathbf{x}|C_0 k/N<f(\mathbf{x})<N^{-\delta} \right\}$ for some constant $C_0$. In this region, the pdf is relatively small, and the probability that $\hat{\eta}(\mathbf{x})$ and $\eta(\mathbf{x})$ have opposite sign becomes larger, thus the technique for the analysis of $S_1$ is no longer effective. However, $f(\mathbf{x})>C_0k/N$ ensures that with high probability, all of the $k$ nearest neighbors are not too far away from test point $\mathbf{x}$. We can then use the estimation error to bound the excess risk of classification in this region.
		\item $S_4=\left\{\mathbf{x}|f(\mathbf{x})\leq C_0k/N\right\}$. In this region, the pdf is too small and classification can be pretty inaccurate. Hence, we bound the excess risk simply with the probability of a sample falling in this region.
	\end{itemize}

For the proof of our lower bound, we construct three types of distributions. For each type, we can find a lower bound of $R-R^*$, in terms of $k$ and $N$. The first type of distribution is just a uniform distribution, and the first lower bound indicates the impact of variance. The second type of distribution involves $n$ cubes with relatively low density and one cube with relatively high density. We adjust $n$ and the density in these cubes, so that the estimation of $\eta(\mathbf{x})$ in the first $n$ cubes with low density is sufficiently inaccurate, therefore we can get another lower bound proportional to the total probability mass of these $n$ cubes. This bound indicates the effect of tail. The third type of distribution also uses $(n+1)$ cubes, similar to the second type. However, the cube size becomes adaptive, and thus can generate a new bound. These three bounds are then combined together. It turns out that if $k$ is larger, than the first bound becomes lower, but the second and third bound becomes higher, and vice versa. We then find the infimum of the maximum of these three bounds by adjusting $k$. 

Detailed proofs of the upper and lower bounds are shown in Appendix \ref{sec:original-ub} and Appendix \ref{sec:original-lb}, respectively.
\end{proof}
\color{black}
Now we compare our result with that of the existing works. If the distribution has a density that is bounded below by a positive constant, then for arbitrarily large $\beta$, there exists a constant $C_b$ so that Assumption \ref{ass:basic} (b) holds. This assumption corresponds to the strong density assumption in \cite{audibert2007fast}. In this case, we have
\begin{eqnarray}
R-R^*=\mathcal{O}\left(N^{-\frac{2(\alpha+1)}{d+4}+\epsilon}\right),
\label{eq:SDAbound}
\end{eqnarray}
for arbitrarily small $\epsilon>0$. \eqref{eq:SDAbound} agrees with the result of \cite{chaudhuri2014rates,doring2017rate,kohler2007rate}. For distributions with tails, our convergence rate is faster than the result in Theorem 4.3 in \cite{gadat2016classification}, because our Assumption \ref{ass:basic} (c) requires two order smoothness.

From this theorem, we observe that, to achieve the best convergence rate for the standard kNN, the selection of $k$ depends on parameters $\alpha$ and $\beta$, which may not be available in practice. On the contrary, the proposed adaptive kNN method presented in Section~\ref{sec:proposed} does not need this information. Furthermore, we will show in Section~\ref{sec:rate} that the proposed method achieves a better convergence rate.
\subsection{Minimax convergence rate}
We now derive the minimax convergence rate of all classifiers (including those classifiers that do not use kNN distances) under Assumption \ref{ass:basic}. Denote $\mathcal{S}$ as the collection of $(f,\eta)$ that satisfy Assumption \ref{ass:basic}, $g$ as the possible classifier. We have the following minimax convergence rate that holds for all classifiers that do not have the knowledge of the underlying regression function $\eta(\mathbf{x})$.
\begin{thm}\label{thm:minimax}
	If
	\begin{eqnarray}
	\beta(2\alpha-d)\leq 2\alpha,
	\label{eq:cond}
	\end{eqnarray}
	then
	\begin{eqnarray}
	\underset{g}{\inf}\underset{(f,\eta)\in \mathcal{S}}{\sup} (R-R^*) =\Omega\left(N^{-\min\left\{\beta,\frac{2\beta (\alpha+1)}{\beta d+2(\alpha+2\beta)} \right\}}\right).
	\label{eq:minimax}
	\end{eqnarray}
\end{thm}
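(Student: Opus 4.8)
The plan is to establish \eqref{eq:minimax} by the standard reduction from estimation to multiple hypothesis testing, using Assouad's lemma in the form of Tsybakov's framework \cite{tsybakov2009introduction}. Since the target rate is the minimum of two exponents, I would build two separate families of hard instances, each tuned so that one of the two exponents becomes binding, derive a lower bound from each, and then report the larger of the two. Because the larger lower bound is $N^{-\min\{\beta,\,2\beta(\alpha+1)/(\beta d+2(\alpha+2\beta))\}}$, this yields exactly \eqref{eq:minimax}. Both families are constructed to lie inside $\mathcal{S}$, so once Assumption \ref{ass:basic} (a)--(d) is verified for each, the bound $\inf_g\sup_{(f,\eta)\in\mathcal S}(R-R^*)\gtrsim$ (each rate) follows, and taking the maximum concludes.

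The common construction is a perturbation hypercube. I would fix $m$ disjoint cubes of side $h$, put on cube $j$ a bump $\eta(\mathbf{x})=\sigma_j\Delta\,\phi((\mathbf{x}-\mathbf{c}_j)/h)$ with $\sigma_j\in\{-1,+1\}$ and $\phi$ a fixed smooth bump, and assign each cube feature density $w$; the remaining mass is carried by a fixed ``bulk'' on which $\eta$ is held at a constant value bounded away from $0$. The sign vector $\sigma\in\{-1,+1\}^m$ indexes the family. Four requirements then pin down $(h,w,m,\Delta)$: (i) the smoothness bound \eqref{eq:smooth} forces $\Delta\lesssim h^2$, since a scaled bump has $\|\nabla^2\eta\|\asymp\Delta/h^2$; (ii) the margin condition forces the total bump mass $m w h^d\lesssim\Delta^\alpha$; (iii) the tail condition forces $m w h^d\lesssim w^\beta$, because the cubes realize the event $\{f\le w\}$; and (iv) to keep the hypotheses indistinguishable, Assouad's lemma requires the per-coordinate KL divergence of the $N$-sample laws of neighboring configurations to stay bounded, i.e. $N w h^d\Delta^2\lesssim 1$. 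As the loss decomposes over cubes and the per-cube excess risk of a wrong sign is $\asymp\Delta\,w h^d$, Assouad's lemma gives $\inf_g\sup_\sigma(R-R^*)\gtrsim m\,\Delta\,w h^d$.

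The two exponents arise from two parameter choices. For the smoothness--margin--tail exponent I would take smooth bumps with $\Delta\asymp h^2$; then (ii) and (iii) together give $h^{2\alpha}\asymp w^\beta$, and substituting into the indistinguishability budget (iv) gives $h\asymp N^{-\beta/(\beta d+2(\alpha+2\beta))}$, whence $m\Delta w h^d\asymp h^{2\alpha+2}=N^{-2\beta(\alpha+1)/(\beta d+2(\alpha+2\beta))}$. Here the hypothesis \eqref{eq:cond}, $\beta(2\alpha-d)\le 2\alpha$, is precisely the condition under which the induced number of cubes satisfies $m\ge 1$, so the construction is nonempty. For the exponent $\beta$ I would instead use constant-scale cubes, $h\asymp 1$ and $\Delta\asymp 1$ (allowed by (i) since $\Delta\asymp h^2$), placed far out in the tail; now (iv) forces $w\asymp 1/N$ and (iii) permits $m\asymp w^{\beta-1}\asymp N^{1-\beta}$ cubes, while the margin (ii) holds with slack. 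This yields $m\Delta w h^d\asymp mw\asymp N^{-\beta}$. Taking the maximum of the two bounds produces \eqref{eq:minimax}.

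The main obstacle is not the optimization but the simultaneous verification of all of Assumption \ref{ass:basic} for the constructed pairs $(f,\eta)$, and in particular the minimum-mass condition (d), which is a lower bound on ball masses and is delicate near cube boundaries and at the interface between the low-density tail cubes and the bulk. I would address this by taking $f$ itself to be smooth, built from the same mollified bumps, so that it is locally doubling, and by separating the cubes by gaps on the scale of their own size, which also prevents the $\eta$-bumps from interacting and makes the per-coordinate KL computation in (iv) exact. A secondary point is converting the per-coordinate KL bound into the total-variation control that Assouad's lemma needs, via Pinsker's inequality, and checking that the bulk can be chosen to absorb the residual mass while keeping $\eta$ bounded away from zero there, so that it contributes nothing to the margin count and satisfies \eqref{eq:smooth} trivially.
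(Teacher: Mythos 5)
Your proposal is correct and follows essentially the same route as the paper: an Assouad hypercube of low-density cells carrying sign-perturbed bumps of height $\Delta\asymp h^2$, with the margin, tail, and indistinguishability constraints combined in exactly the same way to produce the two regimes ($\Delta\asymp h^2$ with matched margin/tail budgets for the exponent $2\beta(\alpha+1)/(\beta d+2(\alpha+2\beta))$, and constant-scale cells with $w\asymp 1/N$ for the exponent $\beta$), and with condition \eqref{eq:cond} correctly identified as what makes the number of cells at least one. The only differences are cosmetic: the paper uses balls with piecewise-constant $\eta=\pm L^2$ (which verifies the averaged smoothness condition \eqref{eq:smooth} directly, avoiding your mollification concerns) and bounds the testing error via Hellinger/total-variation distance between Binomials rather than KL and Pinsker.
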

\begin{proof}
	(Outline) A common approach to obtain the minimax bound is to find a subset of $\mathcal{S}$, and then convert the problem into a hypothesis testing problem using Assouad lemma \cite{audibert2004classification}. We refer to \cite{audibert2004classification} and \cite{tsybakov2009introduction} for a detailed introduction of this type of method.
	
	In our proof, we carefully select a subset $\mathcal{S}^*\subset \mathcal{S}$. In particular, $\mathcal{S}^*$ contains a number of pairs $(f,\eta_\mathbf{v})$, in which $\mathbf{v}$ is a vector with each component taking binary values, such that the marginal distributions of features are the same among $\mathcal{S}^*$, but the underlying regression functions are different depending on $\mathbf{v}$. Then the problem of finding the minimax lower bound of classification can be converted into the problem of finding the minimum error probability of hypothesis testing. Since $\mathcal{S}^*\subset \mathcal{S}$, the minimax convergence rate among $\mathcal{S}^*$ can also be used as a lower bound of the minimax rate among $\mathcal{S}$. The detailed proof can be found in Appendix \ref{sec:minimax}.
\end{proof}

We now discuss the additional condition \eqref{eq:cond} and the result \eqref{eq:minimax}. Note that if the distribution has unbounded support, then as discussed above, the maximum $\beta$ such that there exists a constant $C_b$ so that Assumption \ref{ass:basic} (b) holds is no more than $1$. As a result, regardless of the dimension $d$ and the margin parameter $\alpha$, \eqref{eq:cond} always holds. Our result generalizes and improves some previous results \cite{audibert2007fast,gadat2016classification}. Under the strong density assumption, i.e., the support is bounded and the density is bounded away from zero, our result on the minimax convergence rate is also consistent with Theorem 3.5 in \cite{audibert2007fast}. If $\beta=1$, then our minimax convergence rate is consistent with Theorem 4.1 in \cite{audibert2007fast}, and faster than the result in Theorem 4.2 in \cite{gadat2016classification}, since Assumption \ref{ass:basic} (c) requires two-order smoothness of function $\eta$.

\subsection{Convergence rate of the proposed adaptive kNN classification}\label{sec:rate}
As we can observe from Theorem \ref{thm:standard} and Theorem \ref{thm:minimax}, there exists a gap between the convergence rates of the standard kNN classifier and the minimax lower bound. In this section, we show that this gap can be closed using the new adaptive kNN method presented in Section~\ref{sec:proposed}. To obtain the convergence rate of this adaptive kNN classifier, we need the following additional assumption.
\begin{assum}\label{ass:additional}
	For any $t>0$,
	\begin{eqnarray}
	\text{P}\left(\frac{f(\mathbf{X})}{\text{P}^q(B(\mathbf{X},A))}<t^{1-q}\right)\leq C_b' t^\beta,
	\label{eq:additional}
	\end{eqnarray}
	for some constant $C_b'$.
\end{assum}

The following theorem provides a bound of the convergence rate of the excess risk of the proposed adaptive kNN classifier.
\begin{thm}\label{thm:adaptive}
	Let
	\begin{eqnarray}
	\lambda=\min\left\{\frac{1}{2}q, \frac{2}{d}(1-q) \right\}.
	\label{eq:lamdef}
	\end{eqnarray}
	Under Assumption \ref{ass:basic} and Assumption \ref{ass:additional}, the convergence rate of the excess risk of the adaptive kNN classifier is bounded by
	\begin{eqnarray}
	R-R^*=\left\{
	\begin{array}{ccc}
	\mathcal{O}\left(N^{-\min\left\{\frac{\lambda\beta(\alpha+1)}{\lambda \alpha+\beta}, \beta \right\}}\right) &\text{if} &\beta\neq\lambda\\
	\mathcal{O}(N^{-\beta}\ln N) &\text{if} & \beta=\lambda
	\end{array}	
	\right..
	\label{eq:adaptive}
	\end{eqnarray}
	As a result, the optimal $q$ is $q^*=4/(d+4)$, and the corresponding optimal convergence rate is
	\begin{eqnarray}
	R-R^*=\left\{
	\begin{array}{ccc}
	\mathcal{O}\left(N^{-\min\left\{ \frac{2\beta(\alpha+1)}{\beta d+2(\alpha+2\beta)},\beta \right\}}\right) &\text{if} & \beta\neq \frac{2}{d+4}\\
	\mathcal{O}(N^{-\beta}\ln N)&\text{if}&\beta=\frac{2}{d+4}
	\end{array}
	\right..
	\label{eq:ada-opt-cl}
	\end{eqnarray}
\end{thm}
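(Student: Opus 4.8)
The plan is to bound the pointwise misclassification probability and then integrate it against $|\eta(\mathbf{x})|$ over the feature distribution, following the region-decomposition strategy used for Theorem~\ref{thm:standard} but with thresholds adapted to the sample-dependent $k$. First I would condition on the count $n=\sum_i \mathbf{1}(\mathbf{X}_i\in B(\mathbf{x},A))$, which is $\mathrm{Binomial}(N,p(\mathbf{x}))$ with $p(\mathbf{x}):=\text{P}(B(\mathbf{x},A))$. A Chernoff bound shows $n$ concentrates around $Np(\mathbf{x})$, so that $k=\lfloor Kn^q\rfloor+1$ concentrates around $K(Np(\mathbf{x}))^q$; the low-probability event where this fails contributes only a negligible term and can be absorbed.

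Given the realized $k$, I would control the two usual error sources. For the bias, Assumption~\ref{ass:basic}(d) gives $\text{P}(B(\mathbf{x},r))\gtrsim f(\mathbf{x})r^d$, so the $k$-th nearest-neighbor distance obeys $R_k\lesssim (k/(Nf(\mathbf{x})))^{1/d}$ with high probability; conditioning on $R_k$ and applying the ball-smoothness in Assumption~\ref{ass:basic}(c) bounds $|\mathbb{E}[\hat\eta(\mathbf{x})\mid\text{neighbors}]-\eta(\mathbf{x})|\lesssim C_c R_k^2\lesssim (k/(Nf(\mathbf{x})))^{2/d}$. Substituting the concentrated value of $k$ and writing $u(\mathbf{x}):=f(\mathbf{x})/p(\mathbf{x})^q$, the bias becomes $b(\mathbf{x})\lesssim (N^{1-q}u(\mathbf{x}))^{-2/d}$, which is exactly the quantity whose tail is controlled by Assumption~\ref{ass:additional}. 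For the variance, since $Y^{(i)}\in\{-1,1\}$, Hoeffding's inequality gives $\text{P}(|\hat\eta(\mathbf{x})-\mathbb{E}[\hat\eta(\mathbf{x})\mid\text{neighbors}]|>t)\le 2e^{-kt^2/2}$, and with $k\approx (Np(\mathbf{x}))^q$ the deviation scale is $(Np(\mathbf{x}))^{-q/2}$. The two exponents $(2/d)(1-q)$ and $q/2$ appearing here are precisely the terms whose minimum defines $\lambda$ in~\eqref{eq:lamdef}.

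I would then split the support using a density threshold. Where $b(\mathbf{x})\le|\eta(\mathbf{x})|/2$ (bias controlled), misclassification forces a variance deviation exceeding $|\eta(\mathbf{x})|/2$, so the pointwise excess risk is $\lesssim |\eta(\mathbf{x})|\exp(-ck\eta(\mathbf{x})^2)$; integrating this against the margin assumption~\ref{ass:basic}(a) and the tail control of Assumption~\ref{ass:additional} yields the interior rate $N^{-\lambda\beta(\alpha+1)/(\lambda\alpha+\beta)}$. In the complementary tail region, where $u(\mathbf{x})$ is too small to control the bias, I would bound the conditional excess risk by its probability mass and invoke Assumptions~\ref{ass:basic}(b) and~\ref{ass:additional} to obtain the $N^{-\beta}$ term; the boundary case $\beta=\lambda$ produces the extra logarithmic factor through the usual balancing of these two contributions. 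Finally, since the rate is increasing in $\lambda$, maximizing $\lambda=\min\{q/2,(2/d)(1-q)\}$ by equating its two arguments gives $q^*=4/(d+4)$ and $\lambda=2/(d+4)$, which I substitute to obtain~\eqref{eq:ada-opt-cl}.

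The main obstacle I anticipate is the coupling between the random $k$ and the density-dependent error scales: because $k$ is itself a function of the random count $n$, the bias and variance at $\mathbf{x}$ depend on $\mathbf{x}$ through both $f(\mathbf{x})$ and $p(\mathbf{x})$, and these must be reconciled in the final integration. This is exactly where Assumption~\ref{ass:additional} is essential, since it packages the combined quantity $f(\mathbf{x})/p(\mathbf{x})^q$ into a single tail condition of order $\beta$, so that the interior integral closes with the clean exponent $\lambda\beta(\alpha+1)/(\lambda\alpha+\beta)$ rather than depending separately on $f$ and $p$. Carefully choosing the threshold between the interior and tail regions, and verifying that the concentration-failure events for $n$, $k$, and $R_k$ are all negligible, will be the most delicate parts of the argument.
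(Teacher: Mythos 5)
Your overall architecture matches the paper's: concentration of $n$ (hence of $k$) via Chernoff, the bias scale $(k/(Nf))^{2/d}$ from Assumptions \ref{ass:basic}(c)--(d), the Hoeffding variance scale $k^{-1/2}$, the combined quantity $f/\text{P}^q(B(\cdot,A))$ controlled by Assumption \ref{ass:additional}, and the optimization of $q$ at the end are all exactly the ingredients the paper uses. However, there is a genuine gap in your treatment of the tail, and it is not merely a matter of bookkeeping: your two-region split cannot produce the claimed exponent. You bound the complement of the ``bias-controlled'' region by its probability mass and claim this gives $N^{-\beta}$, but that complement contains the entire moderate-tail regime where $h(\mathbf{x}):=f^{1/(1-q)}(\mathbf{x})/\text{P}^{q/(1-q)}(B(\mathbf{x},A))$ lies between $N^{-1}$ and a threshold $N^{-\delta}$ with $\delta<1$; its probability mass is only $\mathcal{O}(N^{-\delta\beta})$, not $\mathcal{O}(N^{-\beta})$. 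If you balance the interior contribution $N^{-(\alpha+1)\lambda(1-\delta)}$ against a tail contribution of $N^{-\delta\beta}$, the optimal exponent is $\frac{\lambda\beta(\alpha+1)}{\beta+\lambda(\alpha+1)}$, which is strictly worse than the target $\frac{\lambda\beta(\alpha+1)}{\lambda\alpha+\beta}$.

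The missing idea is the paper's region $S_3=\{C_0N^{-1}<h(\mathbf{x})<N^{-\delta}\}$, where one abandons both the exponential concentration argument (because $k$ is too small) and the trivial probability bound (because it is too lossy), and instead uses the pointwise inequality $\mathbf{1}(g(\mathbf{x})\neq g^*(\mathbf{x}))|\eta(\mathbf{x})|\leq|\hat\eta(\mathbf{x})-\eta(\mathbf{x})|$ together with the estimation-error bound $\mathbb{E}\left[|\hat\eta(\mathbf{x})-\eta(\mathbf{x})|\,\big|\,n\right]\lesssim h^{-\lambda}(\mathbf{x})N^{-\lambda}$ (Lemma \ref{lem:mseadaptive}). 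Integrating $h^{-\lambda}$ over $\{h<N^{-\delta}\}$ via the tail lemma yields $\mathcal{O}(N^{-\lambda(1-\delta)-\delta\beta})$, i.e., an extra decaying factor $N^{-\lambda(1-\delta)}$ multiplying the probability mass; only with this improvement does the balance $\delta^*=\lambda\alpha/(\lambda\alpha+\beta)$ against the interior term produce $\frac{\lambda\beta(\alpha+1)}{\lambda\alpha+\beta}$. The pure probability-mass bound is reserved for the deep tail $S_4=\{h\leq C_0N^{-1}\}$, where $k$ may exceed $n$ and the neighbor distances can exceed $A$, and only there does one get the clean $\mathcal{O}(N^{-\beta})$. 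You should restructure your decomposition into these three (or, with the margin split, four) regions and supply the $S_3$ estimation-error argument before the claimed rate follows.
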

\begin{proof}
	(Outline) Similar to the proof of Theorem \ref{thm:standard}, we divide the support set into four regions and derive bound for each region separately. In particular, we divide the support into four regions $S_1, \ldots, S_4$. $S_1$ and $S_2$ are defined similar to $S_1$ and $S_2$ for the standard kNN classifier. In $S_1$, we obtain lower and upper bounds of $k$, which hold with high probability. Then we bound the probability that $\hat{\eta}(\mathbf{x})$ has different sign with the real regression function $\eta(\mathbf{x})$ using the derived upper and lowers bound of $k$. For $S_2$, we use similar bounds derived in the analysis of the standard kNN classifier.
	
	We further divide the tail region into $S_3$ and $S_4$. Here, $S_3$ is selected to ensure that $k\leq n$ with a high probability, hence the kNN distance will not exceed $A$. As discussed in Section \ref{sec:statement}, the main reason for the performance improvement of our new adaptive classifier, as compared to the standard one, is that we use adaptive $k$, so that the estimation of the underlying regression function at the tail of the feature distribution becomes more accurate. More specifically, we can obtain a better convergence rate in $S_3$. It was shown in \cite{mammen1999smooth} that the excess classification error probability can be bounded by the estimation error, hence we can find the bound of the estimation error first, and then use this bound to obtain a bound for the excess classification error probability. In $S_4$, which denotes the region on which the pdf of feature is very small, we can no longer ensure that $k\leq n$ with a high probability, hence the kNN distance can be larger than $A$. As a result, the estimation of the regression function in this region can be quite inaccurate. In this case, we bound the excess risk simply with the probability of a test sample falling in this region. The detailed proof is shown in the Appendix \ref{sec:adaptive}.
\end{proof}

Theorem \ref{thm:adaptive} shows that the convergence rate of the proposed adaptive classifier depends on the choice of $q$. However, the optimal $q$ is $q^*=4/(d+4)$, which depends only on dimension $d$. It does not depend on the sample size $N$ or any unknown parameters of the underlying distribution. $K$ and $A$ do not affect the convergence rate. Therefore our new adaptive method requires less parameter tuning than the standard one. We also observe that \eqref{eq:ada-opt-cl} matches the minimax lower bound provided in Theorem \ref{thm:minimax}, except the special case $\beta=2/(d+4)$, in which an additional $\ln N$ factor is needed.

Now we comment on Assumption \ref{ass:additional}. Given Assumption \ref{ass:basic}, a sufficient condition for Assumption \ref{ass:additional} to hold is that there exists a constant $C_d'$ such that for any $\mathbf{x}$ in the support of the distribution of the feature vector, we have
\begin{eqnarray}
\text{P}(B(\mathbf{x},A))\leq C_c'f(\mathbf{x})V(B(\mathbf{x},A)).
\label{eq:dcomplement}
\end{eqnarray}

This condition is a complement of Assumption \ref{ass:basic} (d). With \eqref{eq:dcomplement} and Assumption \ref{ass:basic}, we know that the average density $\text{P}(B(\mathbf{x},A))/V(B(\mathbf{x},A))$ is upper and lower bounded by $f(\mathbf{x})$ times constant factors $C_d'$ and $C_d$, respectively. Therefore, it can be proved that Assumption \ref{ass:additional} holds. For many common distributions, such as uniform, exponential and Cauchy distributions, \eqref{eq:dcomplement} holds. Therefore, with \eqref{eq:dcomplement} and Assumption \ref{ass:basic}, Assumption \ref{ass:additional} is also satisfied, hence the result in Theorem \ref{thm:adaptive} holds. For some other distributions, \eqref{eq:dcomplement} is not satisfied. However, the convergence rate can still be approximately minimax. Here we show such an example.

\begin{ex}
	Gaussian distributions do not satisfy \eqref{eq:dcomplement}. However, Gaussian distributions satisfy \eqref{eq:additional} for any $0<\beta<1$. Based on this fact, we can bound the convergence rate of the adaptive kNN classifier with $k$ selected by \eqref{eq:kdef}. Assume $\alpha=1$. Furthermore, if Assumption \ref{ass:basic} is satisfied, then Assumption \ref{ass:additional} holds. In this case, according to \eqref{eq:adaptive}, 
	\begin{eqnarray}
	R-R^*=\mathcal{O}\left(N^{-\frac{4\beta}{2+\beta(d+4)}+\delta}\right), \forall 0<\beta<1, \delta>0,
	\end{eqnarray}
	which is equivalent to the following result:
	\begin{eqnarray}
	R-R^*=\mathcal{O}\left(N^{-\frac{4}{d+6}+\delta}\right),
	\end{eqnarray}
	for arbitrarily small $\delta>0$.
\end{ex}
\section{Regression}\label{sec:reg}
In this section, we extend the study to kNN regression. For kNN regression, we replace Assumption \ref{ass:basic} (a) with the conditional variance assumption.
\begin{assum}\label{ass:basicreg}
	Assume that Assumptions \ref{ass:basic} (b), (c), (d) hold, and (a) is replaced by
	\begin{eqnarray}
	\Var[Y|\mathbf{X}=\mathbf{x}]\leq C_a, \forall \mathbf{x}.
	\label{eq:condvar}
	\end{eqnarray}
	
\end{assum}
\eqref{eq:condvar} means that the noise variance is bounded. We will analyze the convergence rate of kNN nonparametric regression for the case where $\eta(\mathbf{x})$ is bounded and unbounded separately.

\subsection{Bounded $\eta(\mathbf{x})$}
We first analyze the convergence rate for the case where $\eta(\mathbf{x})$ is bounded. We specify this additional assumption as following:
\begin{assum}\label{ass:bounded}
	There exists a constant $M$, such that for all $\mathbf{x}$, $|\eta(\mathbf{x})|\leq M$.
\end{assum}
Under this assumption, the following theorem gives a bound of the convergence rate of the standard kNN regression when $k$ is optimally selected.
\begin{thm}\label{thm:standardreg}
	Under Assumptions \ref{ass:basicreg} and \ref{ass:bounded}, the optimal growth rate of $k$ is
	\begin{eqnarray}
	k\sim \left\{
	\begin{array}{ccc}
	N^{\frac{4}{d+4}} &\text{if} & \beta>\frac{4}{d}\\
	N^{\frac{\beta}{\beta+1}} &\text{if} & \beta\leq \frac{4}{d}
	\end{array}
	\right..
	\label{eq:kdef-reg}
	\end{eqnarray}
	If $k$ is selected according to \eqref{eq:kdef-reg}, then the convergence rate of the standard kNN regression method is
	\begin{eqnarray}
	R-R^*=\left\{
	\begin{array}{ccc}
	\mathcal{O}\left(N^{-\frac{4}{d+4}}\right) &\text{if} & \beta>\frac{4}{d}\\
	\mathcal{O}\left(N^{-\frac{\beta}{\beta+1}}\ln N\right) &\text{if} & \beta= \frac{4}{d}\\
	\mathcal{O}\left(N^{-\frac{\beta}{\beta+1}}\right) &\text{if} & \beta< \frac{4}{d}\\
	\end{array}
	\right..
	\label{eq:standardreg}
	\end{eqnarray}
	
	Moreover, the above convergence rate is almost tight. In particular, denote $\mathcal{S}$ as the set of all pairs $(f,\eta)$ such that Assumptions \ref{ass:basicreg} and \ref{ass:bounded} hold with sufficiently large $C_a, C_b, C_c, M$, then for the standard kNN regression method,
	\begin{eqnarray}
	\underset{k}{\inf}\underset{(f,\eta)\in \mathcal{S}}{\sup} (R-R^*)=\left\{
	\begin{array}{ccc}
	\Omega\left(N^{-\frac{4}{d+4}}\right) &\text{if} & \beta>\frac{4}{d}\\
	\Omega\left(N^{-\frac{\beta}{\beta+1}}\right) &\text{if} & \beta\leq \frac{4}{d}.
	\end{array}
	\right.
	\end{eqnarray}
	
\end{thm}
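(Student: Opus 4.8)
The plan is to prove the upper bound via a bias--variance decomposition of the mean squared error and then establish the matching lower bound by exhibiting bad distributions, exactly paralleling the four-region strategy used for the classifier in Theorem~\ref{thm:standard} but now with the squared loss replacing the $0$-$1$ loss and with no margin parameter. Write $R-R^* = \mathbb{E}[(g(\mathbf{X})-\eta(\mathbf{X}))^2]$. Conditioning on the positions $\mathbf{X}_1,\dots,\mathbf{X}_N$, the $k$ neighbor labels $Y^{(i)}$ are independent with mean $\eta(\mathbf{X}^{(i)})$, so the conditional risk at $\mathbf{x}$ splits as $\frac{1}{k^2}\sum_i \Var[Y^{(i)}\mid \mathbf{X}^{(i)}] + (\frac{1}{k}\sum_i\eta(\mathbf{X}^{(i)})-\eta(\mathbf{x}))^2$. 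The first (variance) term is at most $C_a/k$ by Assumption~\ref{ass:basicreg}, uniformly in $\mathbf{x}$, and this produces the $1/k$ contribution to the rate.

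For the bias term I would condition further on the $k$-th nearest neighbor distance $R_k(\mathbf{x})$; given $R_k=r$ the neighbors are i.i.d.\ from $f$ restricted to $B(\mathbf{x},r)$, so the conditional mean of $\frac{1}{k}\sum\eta(\mathbf{X}^{(i)})$ equals $\eta(B(\mathbf{x},r))$, and Assumption~\ref{ass:basic}(c) gives $|\eta(B(\mathbf{x},r))-\eta(\mathbf{x})|\le C_c r^2$; the residual fluctuation of the average contributes only $O(M^2/k)$ by Assumption~\ref{ass:bounded}, which folds into the variance term. Hence the pointwise bias$^2$ is $\lesssim \mathbb{E}[R_k^4\mid \mathbf{X}=\mathbf{x}]$. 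The crux is to control $R_k$: using Assumption~\ref{ass:basic}(d) together with a Chernoff bound on the Binomial count of samples in a ball, whenever $f(\mathbf{x}) > C_0 k/N$ one has $R_k \lesssim (k/(Nf(\mathbf{x})))^{1/d}$ with all but exponentially small probability, giving bias$^2 \lesssim (k/(Nf(\mathbf{x})))^{4/d}$. I would therefore split the support into a ``good'' region $\{f> C_0 k/N\}$ and a ``bad'' tail $\{f\le C_0 k/N\}$. On the bad region I use only $|\frac{1}{k}\sum\eta(\mathbf{X}^{(i)})-\eta(\mathbf{x})|\le 2M$ from Assumption~\ref{ass:bounded}, so its contribution is $\le 4M^2\,\text{P}(f(\mathbf{X})\le C_0 k/N)\lesssim (k/N)^\beta$ by Assumption~\ref{ass:basic}(b); the rare concentration-failure events inside the good region are harmless because the conditional risk is bounded by $C_a/k+4M^2$ there.

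Integrating the good-region bound against $f$ reduces the whole problem to the truncated moment $(k/N)^{4/d}\,\mathbb{E}[f(\mathbf{X})^{-4/d}\mathbf{1}(f>C_0k/N)]$, which I would evaluate with the layer-cake formula and Assumption~\ref{ass:basic}(b): it is $O(1)$ when $\beta>4/d$, it is $O(\ln N)$ when $\beta=4/d$, and it is $O((k/N)^{\beta-4/d})$ when $\beta<4/d$. Adding the $1/k$ variance, the total bound is $1/k + (k/N)^{4/d}$ (up to an $O(1)$ or $O(\ln N)$ factor) in the first two cases and $1/k + (k/N)^\beta$ in the last; minimizing over $k$ by balancing the two terms yields $k\sim N^{4/(d+4)}$ with rate $N^{-4/(d+4)}$ for $\beta>4/d$ (and the extra $\ln N$ at $\beta=4/d$), and $k\sim N^{\beta/(\beta+1)}$ with rate $N^{-\beta/(\beta+1)}$ for $\beta<4/d$, matching \eqref{eq:kdef-reg}--\eqref{eq:standardreg}.

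For the ``almost tight'' lower bound I would build, as in Theorem~\ref{thm:standard}, a small family of distributions in $\mathcal{S}$ and lower bound the kNN risk for every fixed $k$. A bounded-support density bounded away from zero, paired with an $\eta$ of nonzero curvature and bounded noise of variance $\asymp C_a$, forces excess risk $\gtrsim 1/k$ (irreducible averaging variance) and $\gtrsim (k/N)^{4/d}$ (curvature bias over balls of radius $\asymp (k/N)^{1/d}$); such a density satisfies Assumption~\ref{ass:basic}(b) for every $\beta$ and so lies in $\mathcal{S}$. A second, tail construction places most mass on a high-density cell and a controlled fraction $\asymp (k/N)^\beta$ on far-away low-density cells (calibrated so Assumption~\ref{ass:basic}(b) holds with the prescribed $\beta$), on which the neighbors are drawn from the distant bulk and the bias is $\Theta(1)$, forcing excess risk $\gtrsim (k/N)^\beta$. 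Taking the supremum over these constructions and then $\inf_k$ of $\max\{c_1/k,\,c_2(k/N)^{4/d},\,c_3(k/N)^\beta\}$ gives $N^{-4/(d+4)}$ when $\beta>4/d$ and $N^{-\beta/(\beta+1)}$ when $\beta\le 4/d$, since $(k/N)^{4/d}$ dominates $(k/N)^\beta$ exactly when $\beta>4/d$. The main obstacle throughout is the rigorous high-probability control of $R_k$ near the density threshold $f\asymp k/N$ and the clean evaluation of the truncated moment (in particular producing the logarithmic factor at $\beta=4/d$); once these are in place the optimization over $k$ is routine.
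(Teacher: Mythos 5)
Your proposal follows essentially the same route as the paper's proof: a bias--variance decomposition with the variance bounded by $\mathcal{O}(1/k)$, the squared bias controlled via Assumption \ref{ass:basic}(c)--(d) as $(k/(Nf(\mathbf{x})))^{4/d}$ on the region $\{f > C_0 k/N\}$ and trivially by $M^2$ on the tail, the truncated moment $\mathbb{E}[f^{-4/d}\mathbf{1}(f>C_0k/N)]$ evaluated through the tail assumption to produce the three regimes, and a matching lower bound from the same three constructions (pure noise for $1/k$, a curved $\eta$ on a uniform density for $(k/N)^{4/d}$, and a low-density multi-cell construction for $(k/N)^\beta$). The only cosmetic difference is that you control the $k$NN radius by a Chernoff high-probability bound where the paper computes the exact Beta-distribution moment of $\text{P}(B(\mathbf{x},\rho))$ (its Lemma \ref{lem:order}); both are valid and yield the same rates.
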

Now we compare our result with that of existing results. If the feature distribution has bounded support and the density is bounded away from zero, then our result is consistent with previous results, including \cite{gyorfi1981rate,biau2010rates} and Section 3 of \cite{doring2017rate}. If the density is not bounded away from zero, then the convergence rate depends on the tail parameter $\beta$. For many common distributions, we have $\beta\leq 4/d$, hence the convergence rate of the mean square error of the standard kNN regression is slow. For example, if the feature follows exponential distribution and the regression function $\eta$ has bounded Hessian, then we have $\beta=1$, and Assumptions \ref{ass:basic} (b), (c) and (d) hold. In this case, the convergence rate if $\mathcal{O}(N^{-1/2})$. Similar to the kNN classification, an intuitive explanation of the reason why the standard kNN regression converges slowly is that the kNN distances are large at the tail region.

The following theorem shows a minimax lower bound of nonparametric regression.
\begin{thm}\label{thm:minimaxreg}
	Denote $\mathcal{S}$ as the set of all $(f,\eta)$ such that Assumption \ref{ass:basicreg} is satisfied with $f$ and $\eta$, then
	\begin{eqnarray}
	\underset{g}{\inf} \underset{(f,\eta)\in \mathcal{S}}{\sup} (R-R^*)=\Omega\left(N^{-\min\left\{\frac{4}{d+4} ,\beta\right\}}\right).
	\label{eq:minimaxreg}
	\end{eqnarray}
\end{thm}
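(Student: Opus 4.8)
The plan is to establish the minimax lower bound in \eqref{eq:minimaxreg} by constructing a hard subfamily $\mathcal{S}^*\subset\mathcal{S}$ and reducing the estimation problem to a multiple hypothesis testing problem via Assouad's lemma, which is the same machinery invoked in the classification case (Theorem~\ref{thm:minimax}). Since $\ell_2$ regression risk is cleaner than the classification excess risk (no indicator function, no margin coupling), I expect the construction to be a direct analogue but simpler. Concretely, I would fix a single marginal density $f$ satisfying Assumption~\ref{ass:basic} (b) with the target parameter $\beta$, and build a family of regression functions $\eta_{\mathbf{v}}$ indexed by a binary vector $\mathbf{v}\in\{0,1\}^m$ by superimposing small localized bumps on a fixed baseline.

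The two terms inside the $\min$ in \eqref{eq:minimaxreg} correspond to two different regimes, and I would prove each with its own construction. For the $N^{-4/(d+4)}$ term I would use bumps placed in a region where $f$ is bounded below (the ``bulk'' of the distribution): place $m$ disjoint bumps of spatial width $h$ and height $\sim h^2$ (the height forced by the second-order smoothness in Assumption~\ref{ass:basic} (c)), on a grid so that $m\sim h^{-d}$ bumps fit. Each perturbation of $\eta$ by one bump changes the $\ell_2$ risk by $\sim (h^2)^2\cdot(\text{mass of bump})\sim h^{4}\cdot h^d$, and the per-bump Kullback–Leibler / testing-affinity cost is controlled by the number of training points landing in a bump, namely $\sim N h^d$. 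Balancing via Assouad by choosing $h\sim N^{-1/(d+4)}$ yields total excess risk $\sim m\cdot h^{4+d}=h^4\sim N^{-4/(d+4)}$. For the $N^{-\beta}$ term I would instead exploit the low-density tail: put the bumps in a region where $f(\mathbf{x})\sim t$ is small, so that Assumption~\ref{ass:basic} (b) caps the total probability mass available for perturbation at $\sim t^{\beta}$. Here the tail, rather than the smoothness budget, is the binding constraint, and optimizing the density level $t$ against the number of points falling in the low-density bumps produces the rate $N^{-\beta}$. The final bound is the worse (slower) of the two constructions, giving the stated $\min$.

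The technical core in each case is the standard Assouad reduction: I would verify (i) each $\eta_{\mathbf{v}}$ satisfies Assumption~\ref{ass:basicreg}, in particular the second-order smoothness (c) and the conditional-variance bound (a), the latter handled by choosing, say, Gaussian noise $\epsilon$ with fixed variance bounded by $C_a$; (ii) the separation in risk between hypotheses differing in one coordinate of $\mathbf{v}$ is $\Omega$ of the per-bump contribution; and (iii) the $\chi^2$ or Hellinger affinity between the joint laws of the $N$ samples under adjacent hypotheses is bounded away from one, which reduces to the per-bump point count $Nh^d$ (bulk) or $N t^{\beta}$-type quantity (tail) being $O(1)$. Steps (ii) and (iii) are where the exponents are pinned down.

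The main obstacle, as in most minimax tail constructions, is handling the low-density regime so that Assumption~\ref{ass:basic} (b) is active as the genuine binding constraint while simultaneously respecting the minimum-mass condition Assumption~\ref{ass:basic} (d) and the smoothness condition (c). The tension is that driving $f$ small to exploit the tail shrinks the number of training points in each bump, which helps the testing lower bound, but one must ensure that the perturbations $\eta_{\mathbf{v}}$ remain second-order smooth relative to the prescribed constant $C_c$ over that whole low-density region, and that the family still lies in $\mathcal{S}$ for the sufficiently-large constants allowed in the theorem. I expect the cleanest route is to borrow the tail construction essentially verbatim from the classification lower bound (Theorem~\ref{thm:minimax}, detailed in Appendix~\ref{sec:minimax}), replacing the margin-weighted $0$-$1$ excess risk by the squared-error risk and re-tuning the bump heights, so that much of the verification of Assumption~\ref{ass:basic} can be cited rather than redone.
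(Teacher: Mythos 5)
Your proposal matches the paper's proof: the paper reuses the Assouad ball construction from the classification lower bound (a fixed marginal $f$ supported on $n_0$ low-density balls plus one bulk ball, $\eta_{\mathbf{v}}=\pm L^2$ on each ball, Gaussian noise of variance $C_a$), bounds the per-ball testing error via Pinsker's inequality, and obtains the two rates from exactly your two parameter regimes ($L\sim N^{-1/(d+4)}$, $m\sim 1$ for the $N^{-4/(d+4)}$ term; $L\sim 1$, $m\sim N^{-1}$ for the $N^{-\beta}$ term). One small correction to your step (iii): the binding affinity constraint in the bulk regime is $N m L^{d+4}=\mathcal{O}(1)$ (expected points per bump times the per-point KL, which is of order the squared bump height $L^4$), not $Nh^{d}=\mathcal{O}(1)$ as written --- your stated choice $h\sim N^{-1/(d+4)}$ is precisely the one that saturates the former.
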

Similar to the standard kNN classification, for regression problems, from \eqref{eq:standardreg} and \eqref{eq:minimaxreg}, we observe that for many common feature distributions with tails, the standard kNN converges slower than the minimax lower bound. 

We now show the convergence rate of our new adaptive kNN regression. Similar to the kNN classification problem, our analysis for regression also requires Assumption \ref{ass:additional}.

\begin{thm}\label{thm:adaptivereg}
	Define $\lambda$ as \eqref{eq:lamdef}, then with fixed $K,A$ and $q$, under Assumptions \ref{ass:additional}, \ref{ass:basicreg}, \ref{ass:bounded}, the convergence rate of the adaptive kNN regression is bounded by
	\begin{eqnarray}
	R-R^*=\left\{
	\begin{array}{ccc}
	\mathcal{O}\left(N^{-\min\{\beta,2\lambda\}}\right)&\text{if} & \beta\neq 2\lambda\\
	\mathcal{O}\left(N^{-\beta}\ln N\right) &\text{if} & \beta=2\lambda
	\end{array}
	\right..
	\label{eq:adaptivereg}
	\end{eqnarray}
	
	As a result, the optimal $q$ is $q^*=4/(d+4)$, the corresponding $\lambda$ is $2/(d+4)$. Thus except the special case $\beta=4/(d+4)$, the optimal convergence rate is
	\begin{eqnarray}
	R-R^*=\mathcal{O}\left(N^{-\min\left\{\beta,\frac{4}{d+4} \right\}}\right).
	\label{eq:ada-opt}
	\end{eqnarray}
\end{thm}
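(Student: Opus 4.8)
The plan is to start from the exact regression excess risk $R-R^*=\mathbb{E}[(\hat\eta(\mathbf{X})-\eta(\mathbf{X}))^2]$ and control it through a pointwise bias--variance decomposition conditioned on $\mathbf{X}=\mathbf{x}$. Writing $Y^{(i)}=\eta(\mathbf{X}^{(i)})+\epsilon^{(i)}$ and using $\mathbb{E}[\epsilon|\mathbf{X}]=0$ together with $\mathrm{Var}[Y|\mathbf{X}]\le C_a$ from Assumption \ref{ass:basicreg}, the conditional risk splits into a variance term bounded by $C_a/k$ and a squared-bias term bounded, via the smoothness Assumption \ref{ass:basic}(c), by $C_c^2 R_k(\mathbf{x})^4$, where $R_k(\mathbf{x})$ is the distance from $\mathbf{x}$ to its $k$-th nearest neighbor. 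So the first job is to control, with high probability, both the adaptive $k$ and the kNN radius $R_k(\mathbf{x})$.

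Next I would establish two-sided high-probability bounds on $n=\sum_i \mathbf{1}(\mathbf{X}_i\in B(\mathbf{x},A))$, which is Binomial with mean $N\,\mathrm{P}(B(\mathbf{x},A))$, and hence on $k=\lfloor K n^q\rfloor+1\asymp(N\,\mathrm{P}(B(\mathbf{x},A)))^q$. Combined with the minimum-mass Assumption \ref{ass:basic}(d), which gives $\mathrm{P}(B(\mathbf{x},r))\gtrsim f(\mathbf{x})r^d$, a standard binomial argument yields $R_k(\mathbf{x})\lesssim(k/(N f(\mathbf{x})))^{1/d}$ on a good event, provided the $k$-th neighbor still lies within radius $A$, i.e. provided $k\le n$. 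Substituting the bulk value of $k$, the squared bias becomes of order $N^{-4(1-q)/d}\,\xi(\mathbf{x})^{-4/d}$ with $\xi(\mathbf{x}):=f(\mathbf{x})/\mathrm{P}^q(B(\mathbf{x},A))$, while the variance is of order $N^{-q}\,\mathrm{P}(B(\mathbf{x},A))^{-q}$. The pointwise risk is therefore dominated by $N^{-2\lambda}$ times a density-dependent factor, where $2\lambda=\min\{q,4(1-q)/d\}$ is exactly the exponent coming from \eqref{eq:lamdef}.

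The decisive step is the integration over $\mathbf{X}$, which converts the pointwise bounds into the stated global rate and is where the tail assumptions enter. Mirroring the adaptive classification analysis, I would split the support into four regions which, for bounded $\eta$, reduce to a bulk region where the good event $k\le n$ holds and a low-density tail $S_4$ where the kNN radius may exceed $A$. For the squared-bias contribution in the bulk I would integrate $N^{-4(1-q)/d}\,\mathbb{E}[\xi(\mathbf{X})^{-4/d}]$; rewriting Assumption \ref{ass:additional} as $\mathrm{P}(\xi(\mathbf{X})<u)\le C_b' u^{\beta/(1-q)}$ shows that $\mathbb{E}[\xi(\mathbf{X})^{-4/d}]$ is finite precisely when $\beta>4(1-q)/d$, so the bias integral contributes $\mathcal{O}(N^{-4(1-q)/d})$ for large $\beta$ and is otherwise throttled by the tail; the analogous computation for the variance, after using Assumption \ref{ass:basic}(d) to replace $\mathrm{P}(B(\mathbf{x},A))$ by $f(\mathbf{x})$ and then applying Assumption \ref{ass:basic}(b), handles the variance term. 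On $S_4$ I would bound the pointwise risk by the constant $C_a+4M^2$ using the boundedness of $\eta$ from Assumption \ref{ass:bounded}, so that $S_4$ contributes only $\mathcal{O}(\mathrm{P}(\mathbf{X}\in S_4))=\mathcal{O}(N^{-\beta})$ once the defining threshold is taken at scale $N^{-1}$ and Assumption \ref{ass:basic}(b) is applied. Combining the bulk rate $N^{-2\lambda}$ with the tail rate $N^{-\beta}$ gives $\mathcal{O}(N^{-\min\{\beta,2\lambda\}})$, with the borderline $\beta=2\lambda$ producing the extra $\ln N$ from a logarithmically divergent boundary integral; optimizing $2\lambda=\min\{q,4(1-q)/d\}$ over $q$ then yields the crossover $q^*=4/(d+4)$, hence $2\lambda=4/(d+4)$ and the optimal rate \eqref{eq:ada-opt}.

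I expect the main obstacle to be the simultaneous control of the random $k$ and the random radius $R_k(\mathbf{x})$ in the low-density tail, where $k$ defined through $n$ can exceed the number of samples actually inside $B(\mathbf{x},A)$; there the clean density-to-radius conversion from Assumption \ref{ass:basic}(d) breaks down, and one must argue that this bad event is confined to a set of probability $\mathcal{O}(N^{-\beta})$ and that the boundedness of $\eta$ keeps the contribution there harmless. A secondary technical point is making the bulk binomial concentration uniform enough that the pointwise bounds survive integration against a possibly heavy-tailed density, which is exactly the purpose of phrasing the tail condition through $\xi(\mathbf{x})=f(\mathbf{x})/\mathrm{P}^q(B(\mathbf{x},A))$ in Assumption \ref{ass:additional}.
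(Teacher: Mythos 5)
Your proposal is correct and follows essentially the same route as the paper's proof: a conditional bias--variance decomposition, binomial concentration for $n$ giving $k\asymp(N\,\mathrm{P}(B(\mathbf{x},A)))^q$, the radius bound from Assumption \ref{ass:basic}(d) yielding a pointwise risk of order $(Nh(\mathbf{x}))^{-2\lambda}$ (your $\xi(\mathbf{x})=f(\mathbf{x})/\mathrm{P}^q(B(\mathbf{x},A))$ is exactly $h^{1-q}(\mathbf{x})$ in the paper's notation), integration via Assumption \ref{ass:additional} with the logarithm at $\beta=2\lambda$, and a constant bound times $\mathcal{O}(N^{-\beta})$ on the bad event where $k$ may exceed $n$. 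The only cosmetic difference is that the paper computes $\mathbb{E}[\rho^4\,|\,n]$ exactly via the Beta law of $\mathrm{P}(B(\mathbf{x},\rho))$ (Lemma \ref{lem:order}) rather than through a high-probability radius bound, and its variance constant is $(C_a+M^2)/k$ rather than $C_a/k$; neither affects the rate.
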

The above result shows that the convergence rate of our new method in \eqref{eq:ada-opt} is an improvement over the standard kNN regression method, for distributions with $\beta<4/d$. This bound also matches the lower bound provided in Theorem \ref{thm:minimaxreg}, showing that our new method is minimax rate optimal. For example, for one dimensional exponential feature distribution and regression function $\eta$ with bounded Hessian, the convergence rate is $\mathcal{O}\left(N^{-0.8}\right)$, which is an improvement over $\mathcal{O}\left(N^{-1/2}\right)$ achieved by the standard kNN. This implies that the accuracy of our adaptive method can significantly outperform that of the standard kNN regression, especially for distributions with heavier tails. In addition, similar to the classification problem, the selection of parameters for the adaptive classifier also becomes more convenient than that of the standard one, because $q^*$ only depends on $d$, while the optimal $k$ for the standard kNN regression depends on unknown $\beta$. 

\subsection{Unbounded $\eta(\mathbf{x})$}

Now we generalize the above analysis to the case where $\eta(\mathbf{x})$ is not necessarily bounded. In this case, for the test samples whose kNN distances among the training samples are large, the accuracy of estimation of $\eta(\mathbf{x})$ deteriorates more seriously, as large kNN distances occuring at the tail of the distribution here can cause a more serious effect. In this case, we need to change some assumptions. For example, the second order moment of the feature distribution must be bounded, otherwise there is no universally consistent regression method. Then under the new assumption, we derive bounds of the convergence rates of the standard kNN and our adaptive kNN method. The analysis shows that our proposed adaptive method can still outperform the standard one.

We formulate all the assumptions required for the analysis of the cases with unbounded $\eta$ as follows.
\begin{assum}\label{ass:unbounded}
	Suppose that \eqref{eq:condvar} and Assumptions \ref{ass:basic} (c), (d) hold. In addition,
	
	(b') $\mathbb{E}[\norm{\mathbf{X}}^2]\leq M_X$ for some constant $M_X$ and  $\int (1+ \norm{\mathbf{x}}^2) e^{-bf(\mathbf{x})}f(\mathbf{x})d\mathbf{x}\leq C_b' b^{-\beta'} $ for all $b\geq 0$;
	
	(e) For any $\mathbf{x}_1$ and $\mathbf{x}_2$ with $\norm{\mathbf{x}_2-\mathbf{x}_1}\geq D$, in which $D$ is the constant in Assumption \ref{ass:basic} (d), there exists a constant $L$ such that $|\eta(\mathbf{x}_2)-\eta(\mathbf{x}_1)|\leq L\norm{\mathbf{x}_2-\mathbf{x}_1}$.
\end{assum}
Assumption \ref{ass:unbounded} (b') is a modification of Assumption \ref{ass:basic} (b). We now compare these two assumptions. It can be proved that if Assumption \ref{ass:unbounded} (b') holds for some $\beta'$, then Assumption \ref{ass:basic} (b) holds for $\beta=\beta'$, but the converse is not true. For many distributions with heavy tails, the maximum $\beta'$ such that Assumption \ref{ass:unbounded} (b') holds is smaller than the maximum $\beta$ that Assumption \ref{ass:basic} (b) holds.

The following theorem shows that without the new tail Assumption \ref{ass:unbounded} (b'), we can not find a regressor that is uniformly consistent, which implies that the new tail Assumption \ref{ass:unbounded} (b') is necessary.
\begin{thm}\label{thm:nounif}
	Under \eqref{eq:condvar}, Assumptions \ref{ass:basic} (c), (d), and Assumption \ref{ass:unbounded} (e), if Assumption \ref{ass:unbounded} (b') does not hold, then no regressor is uniformly consistent, i.e. there exists a $\delta>0$, such that
	\begin{eqnarray}
	\underset{N\rightarrow \infty}{\lim\sup} \underset{(f,\eta)\in \mathcal{S}}{\sup} (R-R^*)\geq \delta,
	\end{eqnarray}
	in which $\mathcal{S}$ denotes the set of all $(f,\eta)$ that satisfy the assumptions mentioned above.
\end{thm}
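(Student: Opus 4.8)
The plan is to prove this impossibility by a two-point (Le Cam) argument: for each sample size $N$ I will exhibit two regression problems lying in $\mathcal{S}$ that are statistically indistinguishable from $N$ samples, yet force any regressor to incur an $\Omega(1)$ excess risk on at least one of them. Both problems will share the same feature density $f_N$ and the same noise (say Gaussian with variance $\le C_a$, so that \eqref{eq:condvar} holds with a fixed constant), and will differ only through the regression function on a thin spherical shell placed far out in the tail. The guiding intuition is the one already flagged in the excerpt: at the tail the kNN distances are huge and the neighbourhood of a test point is empty, so the sign of a large $\eta$ there cannot be recovered from the data.

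Concretely, I would take $f_N$ to be a fixed well-behaved bulk (compactly supported or Gaussian) carrying mass $1-p_N$, plus a smooth bump of mass $p_N$ supported on a shell $\{\mathbf{x}:\|\mathbf{x}\|\asymp T_N\}$. On the bulk I set $\eta\equiv 0$; on the shell I set $\eta^+=+h_N$ in the first problem and $\eta^-=-h_N$ in the second, joined to the bulk by a smooth ramp of slope at most $L$ and vanishing curvature. Choosing $h_N=LT_N$ keeps $\eta$ globally $L$-Lipschitz, so Assumption \ref{ass:unbounded} (e) holds; the ramp's Hessian is $O(L/T_N)\to 0$, so that $|\eta(B(\mathbf{x},r))-\eta(\mathbf{x})|\le \tfrac12(\sup_N\|\nabla^2\eta\|)\,r^2$ yields Assumption \ref{ass:basic} (c) with a fixed $C_c$; and a slowly varying density gives Assumption \ref{ass:basic} (d) with a fixed $C_d$. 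Calibrating $p_N=1/(2N)$ and $T_N\asymp\sqrt{N}$ makes $T_N^2 p_N$ a constant, so that $\mathbb{E}[\|\mathbf{X}\|^2]$ (the bulk contribution plus $p_N T_N^2$) and $\mathbb{E}[\eta(\mathbf{X})^2]\asymp p_N h_N^2$ both stay bounded; hence all members genuinely lie in $\mathcal{S}$ with $N$-independent constants.

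For indistinguishability I would observe that, conditioned on the event that no training feature lands in the shell, the joint laws of the data coincide under the two problems (equal $f_N$, equal noise, and $\eta^+=\eta^-$ off the shell), so the total variation between the two $N$-fold data laws is at most the probability of hitting the shell, namely $\le N p_N=1/2$. For the separation, any fixed candidate function $g$ satisfies $\ell_0(g)+\ell_1(g)=\int(g-\eta^+)^2 f_N+\int(g-\eta^-)^2 f_N\ge\int_{\mathrm{shell}}\big[(g-h_N)^2+(g+h_N)^2\big]f_N\ge 2h_N^2 p_N$, where $\ell_b(g)=R-R^*$ is the excess $L^2$ risk. Le Cam's two-point inequality then gives $\inf_{\hat g}\max_{b\in\{+,-\}}\mathbb{E}_{P_b}[R-R^*]\ge h_N^2 p_N(1-\mathrm{TV})\ge\tfrac12 h_N^2 p_N=\tfrac12 L^2 T_N^2 p_N=:\delta>0$, a constant under the chosen scaling. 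Since both problems lie in $\mathcal{S}$, this forces $\sup_{(f,\eta)\in\mathcal{S}}(R-R^*)\ge\delta$ for every $N$, hence $\limsup_N\ge\delta$, which is exactly the claim. I would close by noting that this sequence keeps $\int(1+\|\mathbf{x}\|^2)e^{-b_N f_N}f_N\,d\mathbf{x}$ bounded below along a sequence $b_N\to\infty$, i.e. it is precisely a witness that Assumption \ref{ass:unbounded} (b') is being violated, confirming that (b') is necessary for uniform consistency.

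The main obstacle I anticipate is not the Le Cam step, which is routine, but the simultaneous bookkeeping of constants: I must certify that the entire sequence $\{(f_N,\eta_N^{\pm})\}$ satisfies Assumptions \ref{ass:basic} (c), (d) and \ref{ass:unbounded} (e) with a single set of constants $C_c,C_d,L$ independent of $N$, while $\eta$ is driven to infinity and the second moment is held bounded. The tension is that the growing amplitude $h_N=LT_N$ (needed to make the error $\Omega(1)$) must be reconciled with the second-order smoothness and the minimum-mass condition and with $\mathbb{E}\|\mathbf{X}\|^2\le M_X$; the slow ramp together with the $p_N=1/(2N),\ T_N\asymp\sqrt{N}$ calibration is exactly what resolves this tension, and verifying Assumption \ref{ass:basic} (d) through the transition region where $f_N$ changes is the most delicate part. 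It is worth emphasizing that Assumption \ref{ass:unbounded} (e) plays a dual role: it is what permits $\eta$ to be large in the tail, making inconsistency possible, while simultaneously bounding its growth so that the class stays nontrivial — the construction must therefore sit right at the boundary that (e) allows.
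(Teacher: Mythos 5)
Your proposal is correct and rests on the same mechanism as the paper's proof: two regression problems sharing the marginal $f$ and differing only in the sign of $\eta$ on a remote region of small probability mass, which the training sample misses with probability at least $1/2$, so that any estimator pays the full squared amplitude times the tail mass on at least one of the two. The paper phrases this as a Bayes-testing bound on the sign statistic $\hat v=\operatorname{sign}\bigl(\int_m^{m+1}g(x)\,dx\bigr)$ conditioned on the event that the tail cell is empty, rather than via the Le Cam/total-variation inequality, but the two are interchangeable here. The substantive differences are in the calibration. The paper puts mass $1/m$ at distance $m$ with amplitude $m$ and, for each fixed $N$, lets $m\to\infty$, so both $\mathbb{E}[\|\mathbf{X}\|^2]$ and the resulting lower bound $\tfrac12(1-1/m)^N m$ diverge --- it in fact shows $\sup_{(f,\eta)\in\mathcal{S}}(R-R^*)=\infty$, which is permissible because the second-moment bound is part of the discarded Assumption \ref{ass:unbounded} (b'). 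You instead impose $\mathbb{E}[\|\mathbf{X}\|^2]\le M_X$ voluntarily and calibrate $p_N=1/(2N)$, $T_N\asymp\sqrt N$, $h_N\asymp LT_N$ to land on a constant $\delta$; this is not needed for the theorem as stated, but it buys the mildly stronger conclusion that restoring the second-moment bound alone (without the exponential-integral part of (b')) does not rescue uniform consistency. The price is the bookkeeping you flag yourself: the ramp and the shell force you to verify Assumptions \ref{ass:basic} (c) and (d) with $N$-independent constants. The paper sidesteps this by using a disconnected support with piecewise-constant $\eta$, so small balls never see a change in $\eta$ and condition (c) only needs checking at scales $r\gtrsim m$, where it is automatic; you could simplify your construction the same way by putting zero mass on the ramp. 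Two small fixes: if the ramp does carry mass, that mass must be counted in the total-variation bound since $\eta^+\neq\eta^-$ there; and $h_N=LT_N$ slightly overshoots the minimum bulk-to-shell distance, so Assumption \ref{ass:unbounded} (e) requires $h_N\le L(T_N-O(1))$ or an extra factor of $1/2$.
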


With Assumption \ref{ass:unbounded}, our bounds of the convergence rates of the standard kNN and the adaptive kNN regression are shown in Theorem \ref{thm:stdunbounded} and Theorem \ref{thm:adaunbounded}, respectively.

\begin{thm}\label{thm:stdunbounded}
	Under Assumption \ref{ass:unbounded}, the optimal growth rate of $k$ is
	\begin{eqnarray}
	k\sim \left\{
	\begin{array}{ccc}
	N^{\frac{4}{d+4}} &\text{if} & \beta'>\frac{4}{d},\\
	N^{\frac{\beta'}{\beta'+1}} &\text{if} &\beta'\leq \frac{4}{d}.
	\end{array}
	\right.
	\end{eqnarray}
	If $k$ is selected in this way, then the convergence rate of the standard kNN regression, without requiring the boundedness $\eta(\mathbf{x})$, is bounded by:
	\begin{eqnarray}
	R-R^*=\left\{
	\begin{array}{ccc}
	\mathcal{O}\left(N^{-\frac{4}{d+4}}\right) &\text{if} & \beta'>\frac{4}{d},\\
	\mathcal{O}\left(N^{-\frac{\beta'}{\beta'+1}}\ln N\right) &\text{if} &\beta'= \frac{4}{d},\\
	\mathcal{O}\left(N^{-\frac{\beta'}{\beta'+1}}\right) &\text{if} &\beta'< \frac{4}{d}.
	\end{array}
	\right.
	\end{eqnarray}
\end{thm}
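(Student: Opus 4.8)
The plan is to begin from the exact bias--variance identity and reduce the excess risk to two separately controllable pieces. Writing $\hat\eta(\mathbf{x})=\frac1k\sum_{i=1}^k Y^{(i)}$ and substituting $Y^{(i)}=\eta(\mathbf{X}^{(i)})+\epsilon^{(i)}$, conditioning on the positions of the $k$ neighbors gives
\begin{eqnarray}
&&\mathbb{E}\left[(\hat\eta(\mathbf{x})-\eta(\mathbf{x}))^2\mid\mathbf{X}^{(1)},\dots,\mathbf{X}^{(k)}\right]\nonumber\\
&&\quad=\left(\frac1k\sum_{i=1}^k(\eta(\mathbf{X}^{(i)})-\eta(\mathbf{x}))\right)^2+\frac1{k^2}\sum_{i=1}^k\Var[Y\mid\mathbf{X}^{(i)}].\nonumber
\end{eqnarray}
By \eqref{eq:condvar} the second (variance) term is at most $C_a/k$ uniformly in $\mathbf{x}$, so the entire task is to bound the squared-bias term, integrate it against $f$, and then optimize over $k$. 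Throughout I write $\rho_k(\mathbf{x})$ for the distance from $\mathbf{x}$ to its $k$-th nearest neighbor, and I split the feature space at the density level $f(\mathbf{x})\sim k/N$, which is exactly the level below which the typical $\rho_k(\mathbf{x})$ exceeds the constant $D$ of Assumption~\ref{ass:basic}(d).

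First I would treat the bulk $\{f(\mathbf{x})\gtrsim k/N\}$. Here a binomial concentration argument, combined with the minimum-mass Assumption~\ref{ass:basic}(d), shows that $\rho_k(\mathbf{x})\lesssim(k/(Nf(\mathbf{x})))^{1/d}<D$ except on an event of negligible probability; on this event the quadratic smoothness Assumption~\ref{ass:basic}(c) controls the average of $\eta$ over the neighbors and yields a squared bias of order $\rho_k(\mathbf{x})^4\lesssim(k/(Nf(\mathbf{x})))^{4/d}$. Integrating against $f$ gives a bulk contribution of order $(k/N)^{4/d}\int_{f\gtrsim k/N}f(\mathbf{x})^{1-4/d}\,d\mathbf{x}$. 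The convergence of this integral is governed by the tail: since Assumption~\ref{ass:unbounded}(b') implies Assumption~\ref{ass:basic}(b) with $\beta=\beta'$, one has $\text{P}(f(\mathbf{X})\le t)\le C_b t^{\beta'}$, so $\int f^{1-4/d}$ converges when $\beta'>4/d$ and the bulk term is of order $(k/N)^{4/d}$, whereas when $\beta'<4/d$ the integral is dominated by its lower cutoff $t_0\sim k/N$ and the bulk term becomes of order $(k/N)^{\beta'}$; the borderline $\beta'=4/d$ produces a logarithmic factor.

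The main obstacle is the deep tail $\{f(\mathbf{x})\lesssim k/N\}$, where $\rho_k(\mathbf{x})$ typically exceeds $D$ and $\eta(\mathbf{x})$ may itself be large, so the quadratic bound is unavailable; this is precisely the step that forces the new Assumptions~\ref{ass:unbounded}(b') and (e). Here the $k$ nearest neighbors of a far, isolated point are drawn essentially from the bulk of the distribution, so $\rho_k(\mathbf{x})$ is of order $\norm{\mathbf{x}}$, and the Lipschitz bound (e) gives a squared bias of order $L^2\rho_k(\mathbf{x})^2\lesssim L^2(1+\norm{\mathbf{x}}^2)$. The probability that $\mathbf{x}$ has so few nearby samples, i.e.\ that its $k$-th neighbor lies this far, is a binomial lower-tail event whose probability decays like $e^{-cN\text{P}(B(\mathbf{x},r))}\le e^{-c'Nf(\mathbf{x})}$ by Assumption~\ref{ass:basic}(d). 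Consequently the deep-tail contribution is bounded by $L^2\int(1+\norm{\mathbf{x}}^2)e^{-c'Nf(\mathbf{x})}f(\mathbf{x})\,d\mathbf{x}$, which is exactly the quantity Assumption~\ref{ass:unbounded}(b') controls (with $b=c'N$), yielding a bound of order $N^{-\beta'}$. Since $k\ge1$ we have $N^{-\beta'}\le(k/N)^{\beta'}$, so this genuinely tail-driven term is always dominated by the bulk contribution; verifying this domination while simultaneously tracking the random kNN distance, the growth of $\eta$, and the tail mass is the delicate part of the argument.

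Finally I would collect the pieces into a single bound of the schematic form $C_a/k+\mathcal{O}\left((k/N)^{\min\{4/d,\beta'\}}\right)$, up to a logarithmic factor at $\beta'=4/d$. Balancing the decreasing variance $C_a/k$ against the increasing squared bias then recovers the two regimes: when $\beta'>4/d$, equating $C_a/k$ with $(k/N)^{4/d}$ gives $k\sim N^{4/(d+4)}$ and rate $N^{-4/(d+4)}$; when $\beta'<4/d$, equating with $(k/N)^{\beta'}$ gives $k\sim N^{\beta'/(\beta'+1)}$ and rate $N^{-\beta'/(\beta'+1)}$; and the boundary case yields the extra $\ln N$. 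This matches the choices of $k$ and the convergence rates asserted in the theorem, and the only ingredients beyond the bounded-$\eta$ proof of Theorem~\ref{thm:standardreg} are the deep-tail estimates supplied by Assumptions~\ref{ass:unbounded}(b') and (e).
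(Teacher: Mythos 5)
Your architecture --- bias--variance split, a case division at the density level $f(\mathbf{x})\sim k/N$, quadratic smoothness plus the tail integral $\mathbb{E}\big[f^{-4/d}(\mathbf{X})\mathbf{1}(f(\mathbf{X})\gtrsim k/N)\big]$ in the bulk, the Lipschitz Assumption \ref{ass:unbounded}(e) in the deep tail, and the final balancing of $1/k$ against $(k/N)^{\min\{4/d,\beta'\}}$ --- is the same as the paper's, and your bulk analysis is sound. The genuine gap is in the deep-tail step. You treat ``the $k$-th neighbor lies far away'' as a rare event and multiply the per-point loss $L^2(1+\norm{\mathbf{x}}^2)$ by a probability $e^{-cN\text{P}(B(\mathbf{x},r))}\leq e^{-c'Nf(\mathbf{x})}$, which lets you apply Assumption \ref{ass:unbounded}(b') with $b\sim N$ and claim a contribution of $\mathcal{O}(N^{-\beta'})$. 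But on $\{f(\mathbf{x})\lesssim k/N\}$ the count of samples in a fixed ball has mean $N\text{P}(B(\mathbf{x},r))\lesssim k$, so the lower-tail event $\{\mathrm{Binom}(N,\text{P}(B(\mathbf{x},r)))\leq k\}$ has probability of order one, not exponentially small: the Chernoff bound is $e^{-Np}(eNp/k)^k$, and the factor $(eNp/k)^k$ cannot be discarded (uniformly in $p$ one only gets something like $2^k e^{-Np/2}$, which is vacuous here). A large kNN distance is the \emph{typical} outcome for a deep-tail test point, so you must pay the full measure of that region: bound the conditional loss there unconditionally by $4L^2\norm{\mathbf{x}}^2+C$ and control $\int \mathbf{1}(f(\mathbf{x})\leq t_0)(1+\norm{\mathbf{x}}^2)f(\mathbf{x})d\mathbf{x}$ with $t_0\sim k/N$ via $\mathbf{1}(f\leq t_0)\leq e\cdot e^{-f/t_0}$, i.e.\ Assumption \ref{ass:unbounded}(b') with $b\sim N/k$, which yields $\mathcal{O}((k/N)^{\beta'})$, not $\mathcal{O}(N^{-\beta'})$. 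This is exactly what the paper does.

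The error happens not to change the stated rates, since a $(k/N)^{\beta'}$ term already appears from the bulk when $\beta'<4/d$ and is dominated by $(k/N)^{4/d}$ when $\beta'>4/d$, so the schematic bound $C_a/k+\mathcal{O}\big((k/N)^{\min\{4/d,\beta'\}}\big)$ and the optimization over $k$ survive once the deep-tail term is corrected. Two further points you should make explicit rather than assert: (i) the unconditional deep-tail loss bound needs $\mathbb{E}[\rho^2\mid\mathbf{x}]\lesssim \norm{\mathbf{x}}^2+\mathbb{E}[\rho_0^2]$ with $\rho_0$ the $(k+1)$-NN distance from the origin and $\mathbb{E}[\rho_0^2]=\mathcal{O}(1)$; this is where the moment condition $\mathbb{E}[\norm{\mathbf{X}}^2]\leq M_X$ in (b') is actually consumed (the paper's Lemma \ref{lem:rho0}); and (ii) in the bulk your ``event of negligible probability'' $\{\rho\geq D\}$ must be paired with the same unbounded-$\eta$ loss bound, so one needs (exponentially small probability) times (finite second moment), not merely that the probability is small.
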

\begin{thm}\label{thm:adaunbounded}
	Under Assumptions \ref{ass:additional} and \ref{ass:unbounded}, the convergence rate of the adaptive kNN regressor is bounded by:
	\begin{eqnarray}
	R-R^*=\left\{
	\begin{array}{ccc}
	\mathcal{O}\left(N^{-\min\{\beta',2\lambda\}}\right)&\text{if} & \beta'\neq 2\lambda\\
	\mathcal{O}\left(N^{-\beta'}\ln N\right) &\text{if} & \beta'=2\lambda
	\end{array}
	\right.,
	\label{eq:unbounded}
	\end{eqnarray}
	in which $\lambda$ is defined in \eqref{eq:lamdef}. The optimal $q$ is $q^*=4/(d+4)$. The corresponding convergence rate is
	\begin{eqnarray}
	R-R^*=\left\{
	\begin{array}{ccc}
	\mathcal{O}\left(N^{-\min\left\{\beta',\frac{4}{d+4} \right\}}\right) &\text{if} &\beta'\neq 4/(d+4)\\
	\mathcal{O}\left(N^{-\beta'}\ln N\right)&\text{if} &\beta'=4/(d+4)
	\end{array}
	\right..
	\end{eqnarray}
\end{thm}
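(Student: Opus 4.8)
The plan is to reuse the four-region decomposition from the proof of Theorem~\ref{thm:adaptivereg}, confining every new difficulty caused by the unboundedness of $\eta$ to the extreme tail, where it is absorbed by Assumption~\ref{ass:unbounded}~(b') and (e). Conditioning on the feature positions $\{\mathbf{X}_i\}_{i=1}^N$ and the query $\mathbf{X}$, and writing $\hat{\eta}(\mathbf{X})=\frac{1}{k}\sum_i Y^{(i)}$, I would start from the pointwise identity
\[
\mathbb{E}\left[(\hat{\eta}(\mathbf{X})-\eta(\mathbf{X}))^2 \mid \{\mathbf{X}_i\}_{i=1}^N,\mathbf{X}\right]=\Var(\hat{\eta}(\mathbf{X}))+\left(\mathbb{E}[\hat{\eta}(\mathbf{X})]-\eta(\mathbf{X})\right)^2,
\]
so the pointwise risk is a variance term bounded by $C_a/k$ through the bounded-noise condition \eqref{eq:condvar}, plus a squared bias $\big(\frac{1}{k}\sum_i(\eta(\mathbf{X}^{(i)})-\eta(\mathbf{X}))\big)^2$. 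The whole excess risk is then the expectation of this over $\mathbf{X}$, which I would split according to how the adaptive index $k$ behaves.

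First I would establish that the adaptive index concentrates. Since $n$ is $\mathrm{Binomial}(N,\text{P}(B(\mathbf{x},A)))$, a Chernoff bound gives $n\asymp N\text{P}(B(\mathbf{x},A))$, and hence $k=\lfloor Kn^q\rfloor+1\asymp K(N\text{P}(B(\mathbf{x},A)))^q$, with failure probability negligible wherever $N\text{P}(B(\mathbf{x},A))$ is not too small. On the event $k\le n$ all $k$ neighbors lie inside $B(\mathbf{x},A)$, so combining Assumption~\ref{ass:basic}~(d) with $N\text{P}(B(\mathbf{x},R_k))\approx k$ bounds the kNN radius by $R_k\lesssim (k/(Nf(\mathbf{x})))^{1/d}$. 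On this bulk region the second-order smoothness \eqref{eq:smooth} then controls the squared bias by $C_c^2R_k^4\lesssim (k/(Nf(\mathbf{x})))^{4/d}$. Substituting $k\asymp (N\text{P}(B(\mathbf{x},A)))^q$, the variance $C_a/k$ integrates to $N^{-q}$, while the squared bias carries a factor $N^{-(4/d)(1-q)}$ times a (truncated) negative moment of $f(\mathbf{X})/\text{P}^q(B(\mathbf{X},A))$ that is controlled by Assumption~\ref{ass:additional}; this is exactly where the exponent $2\lambda=\min\{q,(4/d)(1-q)\}$ is produced, together with the $\ln N$ factor at the boundary.

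The hard part is the tail region, where $N\text{P}(B(\mathbf{x},A))$ is so small that $k>n$ may occur, $R_k$ can exceed $A$, and $\eta(\mathbf{X})$ is itself unbounded, so the bounded-case bound ``$M^2\times(\text{tail mass})$'' is unavailable. Here I would use Assumption~\ref{ass:unbounded}~(e): fixing a reference point $\mathbf{x}_0$, the Lipschitz growth gives $|\eta(\mathbf{x})|\le|\eta(\mathbf{x}_0)|+L(\norm{\mathbf{x}}+\norm{\mathbf{x}_0})$, hence $\eta(\mathbf{x})^2=O(1+\norm{\mathbf{x}}^2)$, and it also bounds $|\eta(\mathbf{X}^{(i)})-\eta(\mathbf{X})|$ by $L R_k$ once the neighbors are far. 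The event that $R_k$ is large at $\mathbf{x}$ is a depletion event whose probability decays like $e^{-cN\text{P}(B(\mathbf{x},\cdot))}\lesssim e^{-c'Nf(\mathbf{x})}$; multiplying by the weight $1+\norm{\mathbf{x}}^2$ and integrating is precisely the quantity controlled by $\int(1+\norm{\mathbf{x}}^2)e^{-bf(\mathbf{x})}f(\mathbf{x})\,d\mathbf{x}\le C_b'b^{-\beta'}$ in Assumption~\ref{ass:unbounded}~(b'), with $b$ taken to be a suitable power of $N$. Choosing $b$ to balance yields a tail contribution of order $N^{-\beta'}$, and Theorem~\ref{thm:nounif} shows this assumption cannot be dropped.

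Combining the bulk bound $N^{-2\lambda}$ with the tail bound $N^{-\beta'}$ gives $R-R^*=\mathcal{O}(N^{-\min\{\beta',2\lambda\}})$, with the extra $\ln N$ precisely at $\beta'=2\lambda$. Finally, maximizing $2\lambda=\min\{q,(4/d)(1-q)\}$ over $q\in(0,1)$, the two terms coincide at $q^*=4/(d+4)$, where $2\lambda=4/(d+4)$, which yields the stated optimal rate. I expect the genuinely delicate step to be the simultaneous control, in the tail, of the large kNN radius $R_k$, the possibly large $\eta(\mathbf{X})$, and the small probability mass: the weight $1+\norm{\mathbf{x}}^2$ and the exponential factor in (b') are tailored to this coupling, and matching them to the depletion probability is where the argument must be carried out carefully.
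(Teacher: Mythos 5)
Your proposal follows essentially the same route as the paper's proof: split according to whether $n$ (equivalently $N\mathrm{P}(B(\mathbf{x},A))$ and $h(\mathbf{x})$) is large enough that $k\le n$, reuse the bounded-$\eta$ adaptive analysis in the bulk with $\beta$ replaced by $\beta'$ to get the $N^{-2\lambda}$ term, and in the depleted tail combine the Lipschitz growth of $\eta$ (Assumption \ref{ass:unbounded}~(e)), the second-moment bound on $\mathbf{X}$, and the Chernoff bound $\mathrm{P}(n\le n_c\mid\mathbf{x})\lesssim e^{-cNf(\mathbf{x})}$ with the weighted integral in Assumption \ref{ass:unbounded}~(b') (taking $b\propto N$) to get $N^{-\beta'}$, then optimize $q$. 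The one step you flag as delicate — controlling $\mathbb{E}[R_k^2\mid\mathbf{x}]=O(1+\norm{\mathbf{x}}^2)$ when neighbors leave $B(\mathbf{x},A)$ — is exactly what the paper does by comparing $\rho$ to the $(k+1)$-NN radius at the origin, so your outline is correct and matches the paper's argument.
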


We observe that if the feature distribution has a bounded support, or is sub-Gaussian or sub-exponential, then the convergence rate does not suffer seriously from the unbounded regression function $\eta(\mathbf{x})$, since it can be shown that in this case, Assumption \ref{ass:unbounded} (b') holds with any $\beta'<1$, and we can just let $\beta'$ to be sufficiently close to $1$, and therefore \eqref{eq:unbounded} becomes $R-R^*=\mathcal{O}(N^{-4/(d+4)})$, if we let $q=4/(d+4)$. This rate is the same as the convergence rate we derived for the case with bounded $\eta(\mathbf{x})$. Such observation can be explained by the fact that the training samples are not too far away from each other. For example, for sub-exponential distributions, we have $\mathbb{E}\left[\underset{i,j}{\max} \norm{\mathbf{X}_i-\mathbf{X}_j}\right]=\mathcal{O}(\ln N)$, which implies that with Assumption \ref{ass:unbounded} (e), the difference between the values of $\eta$ at all samples can not exceed $\mathcal{O}(\ln N)$ on average. In this case, the performance of both standard and adaptive kNN regression are similar to the case with bounded $\eta$, except that there may be an additional $\ln N$ factor. However, for distributions with heavy tails, the maximum $\beta'$ such that Assumption \ref{ass:unbounded} (b') holds is smaller than the maximum $\beta$ such that Assumption \ref{ass:basic} (b) holds. Hence the convergence rate with unbounded regression function can be substantially slower than the case with a bounded real regression function. This phenomenon can be explained by the fact that the distances between samples can be large, which can cause serious effect when we estimate $\eta$ based on the nearest neighbors.

\section{Numerical Examples}\label{sec:num}
In this section, we provide numerical experiments to illustrate the analytical results derived in this paper. In these experiments, we compare the empirical performance of our adaptive kNN classification and regression methods with that of the standard one. 

To make the comparison between the proposed adaptive kNN and the standard kNN as fair as possible, we set the parameter in the following way. For the proposed adaptive kNN, we fix $A=1$ in all of the numerical experiments, and then find best $K$ to minimize the empirical risk at $N=500$ by conducting a series of numerical simulations with different $K$. Similarly, at $N=500$, we also find best $k$ for the standard kNN method. After $K$ in the proposed method and $k$ in the standard kNN are both optimally tuned, we compare the performance for different sample sizes. For our new adaptive method, we use the same value of $A$ and $K$ as discussed above to determine $k$ in~\eqref{eq:kdef}. For $q$ in~\eqref{eq:kdef}, we use $q=4/(d+4)$. For the standard kNN method, we let $k$ grow with $N$, and the growth rate is specified in the Theorems \ref{thm:standard}, \ref{thm:standardreg} and \ref{thm:stdunbounded}. 

We show the simulation results for classification and regression separately.
\subsection{Classification}
The results of simulations on one and two dimensional feature distributions are shown in Fig. \ref{fig:compare} and \ref{fig:compare2}, respectively.

\begin{figure}[h]
	\begin{center}
		\subfigure[\small 1d Laplace distribution, $\eta(x)=\cos(5x)$.]{\includegraphics[width=0.45\linewidth,height=0.34\linewidth]{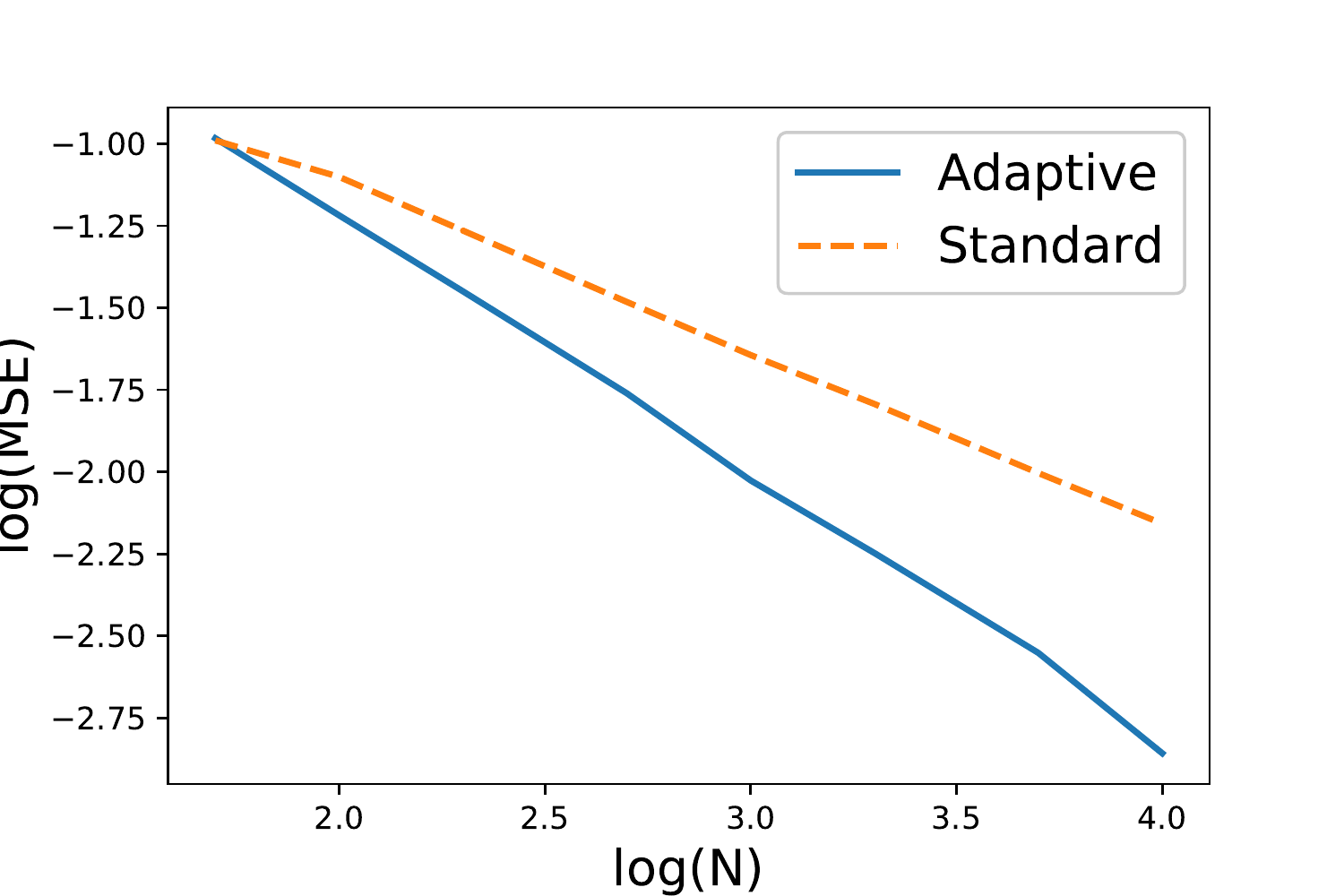}}
		\subfigure[\small $t_5$ distribution, with regression function \hspace{1cm} $\eta(x)=\cos(5x)$.]{\includegraphics[width=0.45\linewidth,height=0.34\linewidth]{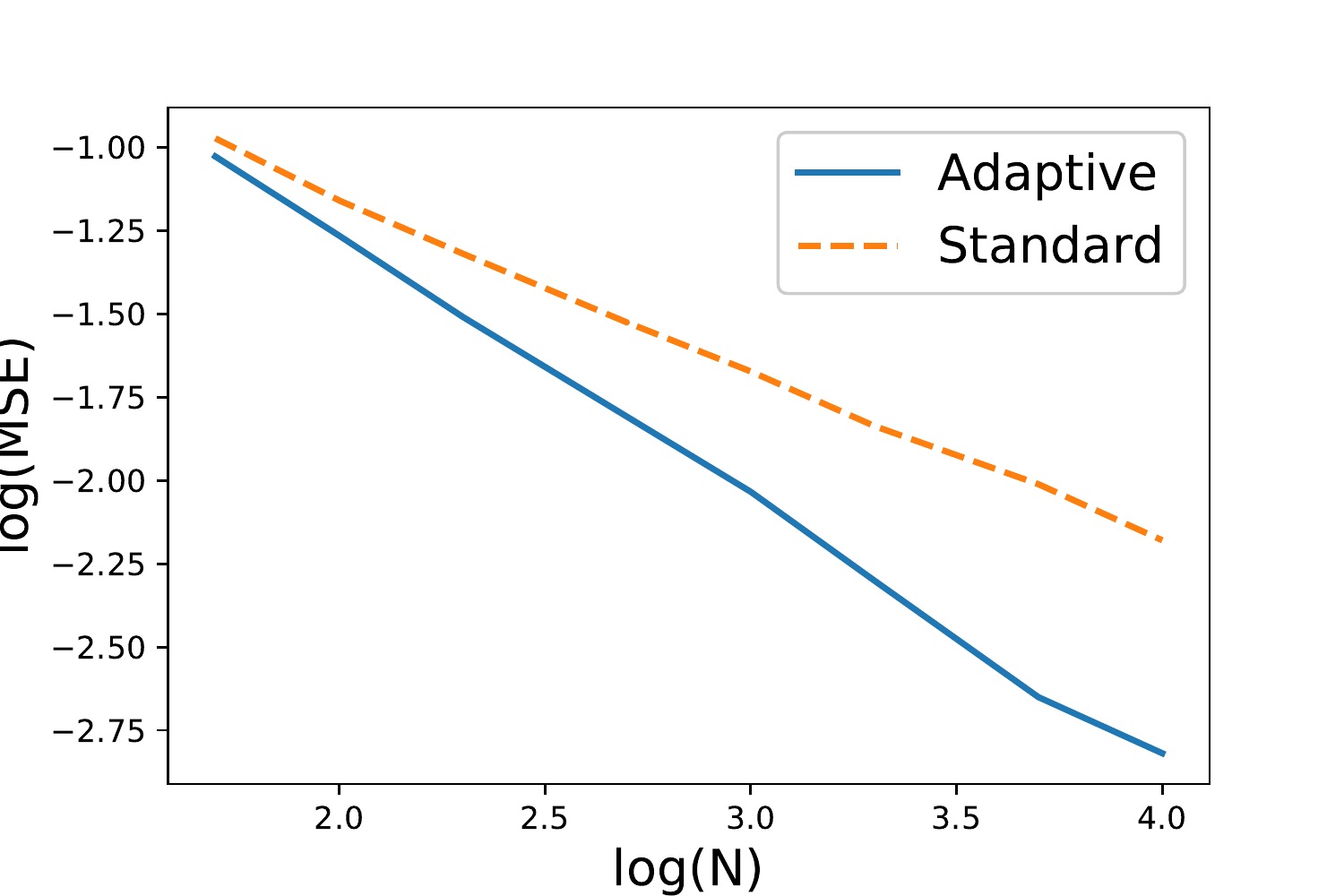}}		
		\subfigure[\small $t_2$ distribution, with regression function \hspace{1cm} $\eta(x)=\cos(5x)$.]{\includegraphics[width=0.45\linewidth,height=0.34\linewidth]{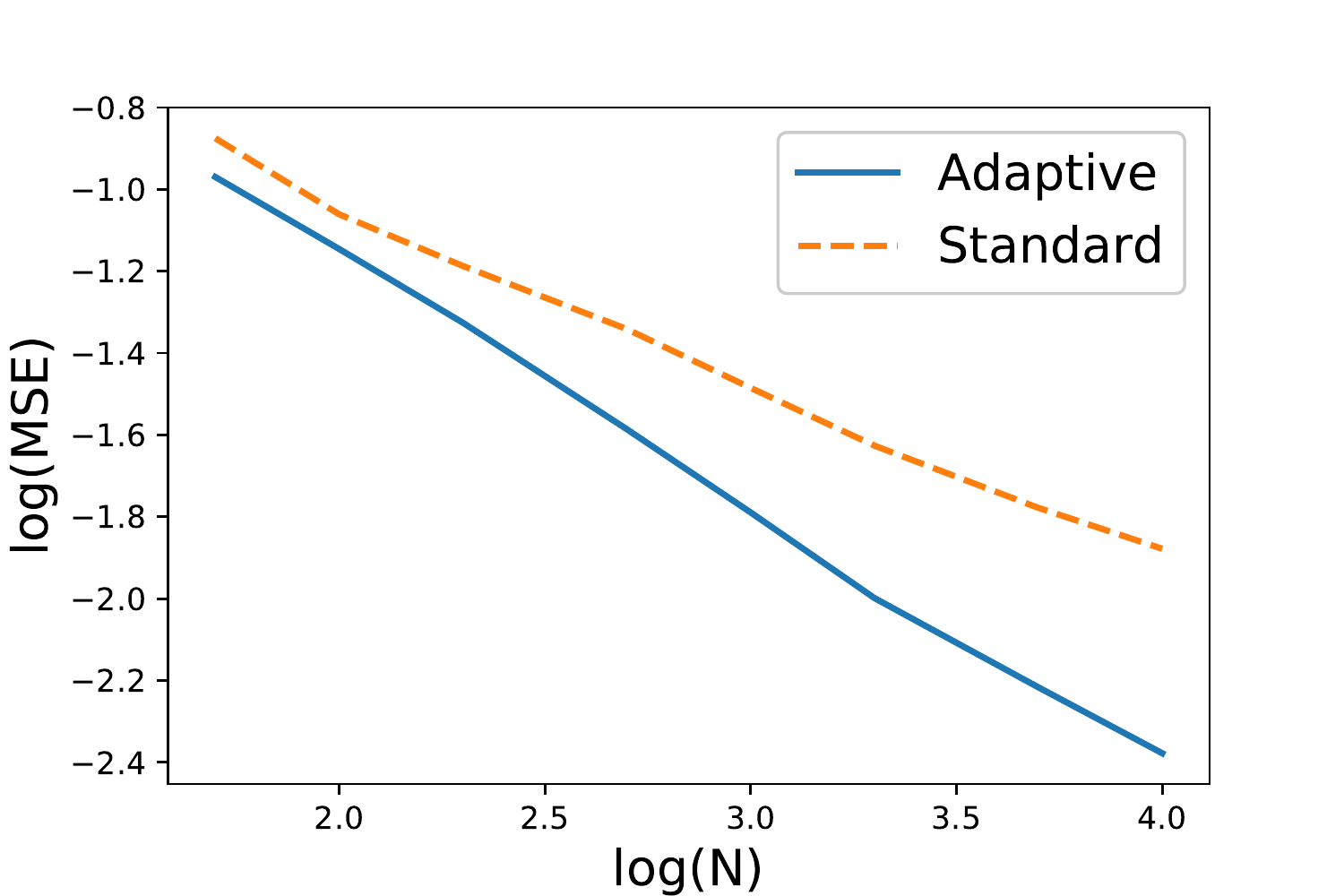}}
		\subfigure[\small 1d Laplace distribution. $\eta$ is determined in \eqref{eq:eta2}. ]{\includegraphics[width=0.48\linewidth,height=0.34\linewidth]{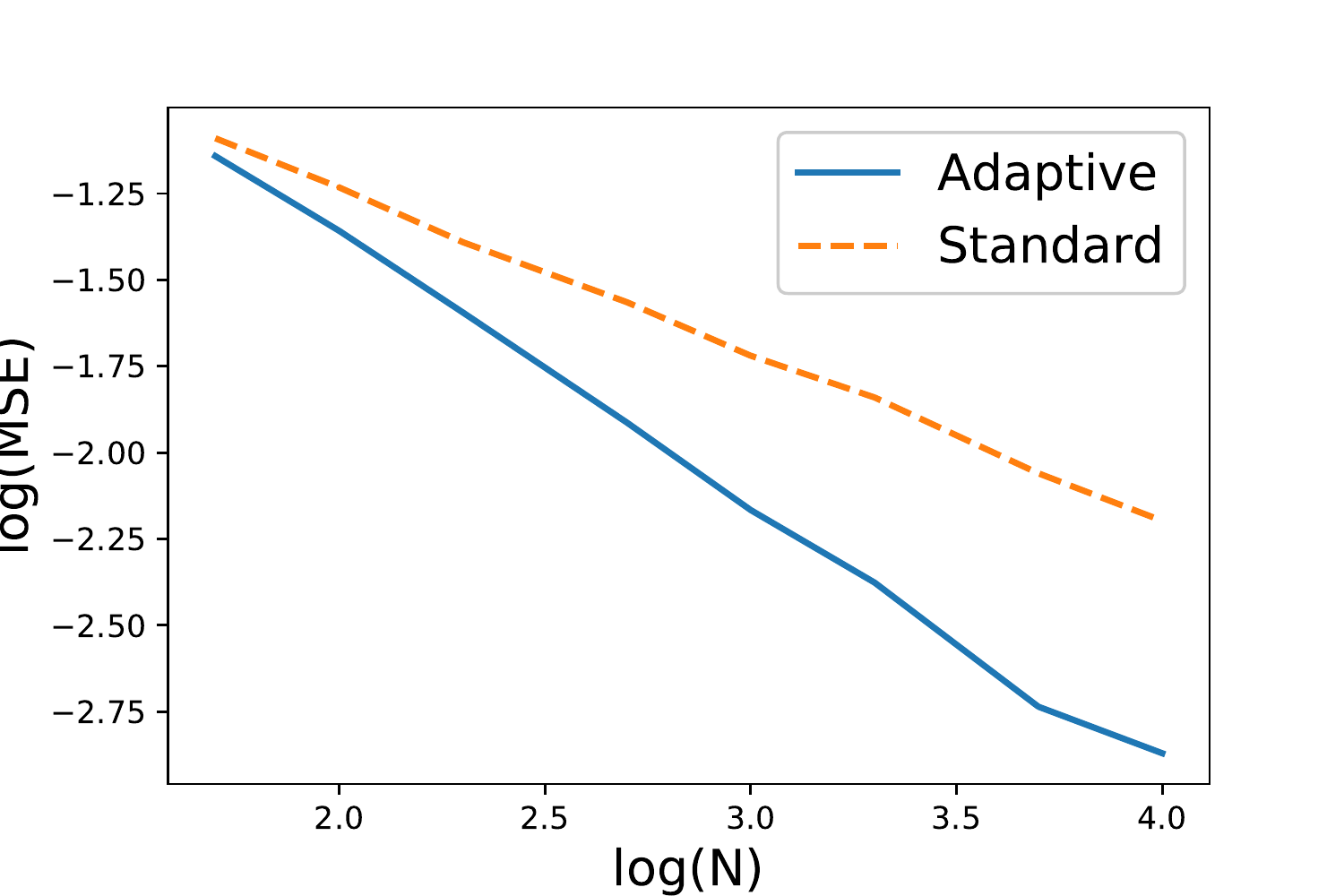}}	
		\caption{Comparison of excess risk of the proposed adaptive kNN classifier and the standard kNN classifier on one dimensional distributions. Blue line corresponds to the adaptive classifier. Orange dashed line corresponds to the standard classifier.}	\label{fig:compare}
	\end{center}
\end{figure}
\begin{figure}[h]	
	\begin{center}
		\subfigure[\small 2d Laplace distribution, with regression function $\eta(\mathbf{x})=\cos(2x_1+2x_2)$.]{\includegraphics[width=0.45\linewidth,height=0.33\linewidth]{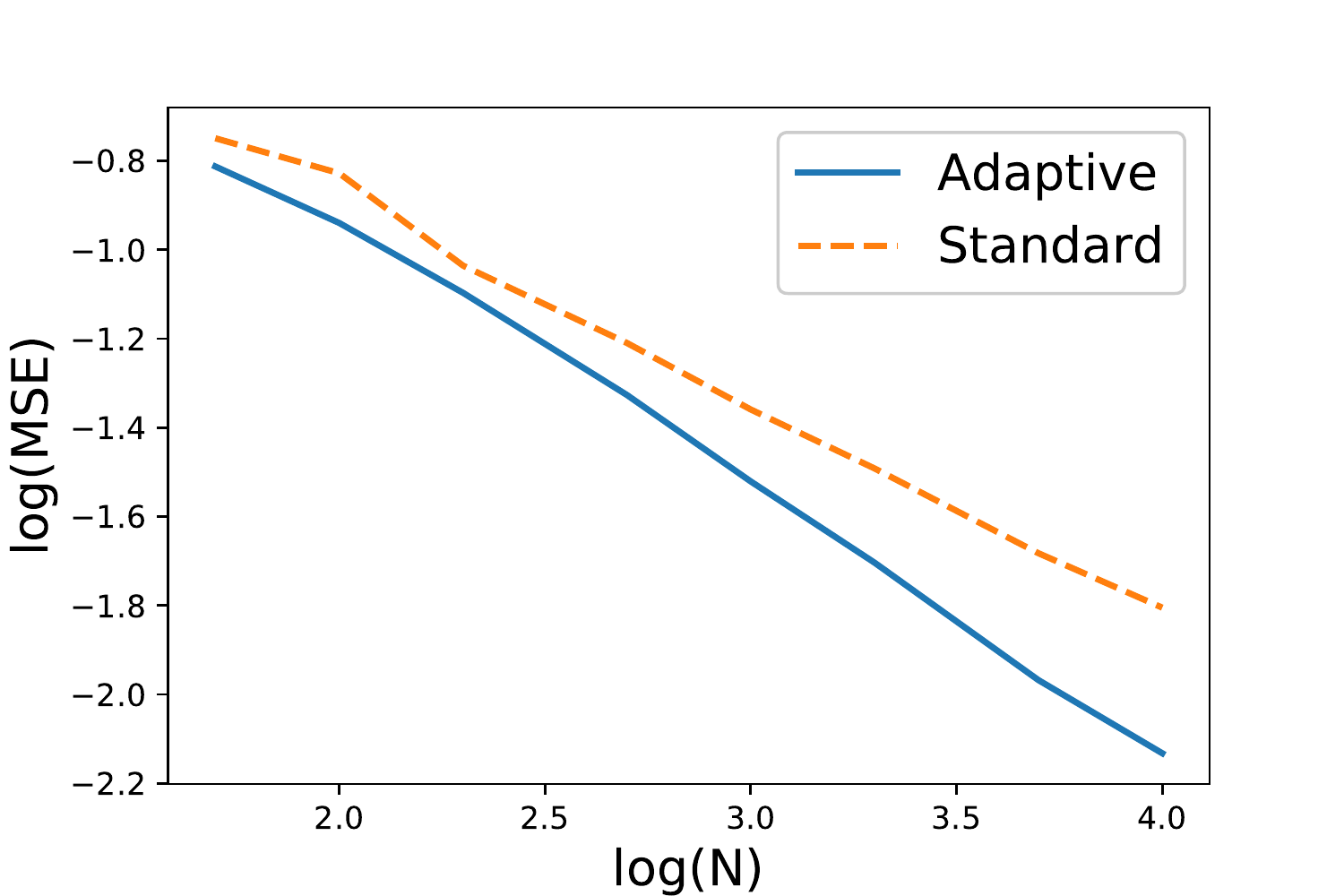}}	
		\subfigure[\small 2d Laplace distribution, $\eta(\mathbf{x})=\cos(2x_1)$.]{\includegraphics[width=0.45\linewidth,height=0.33\linewidth]{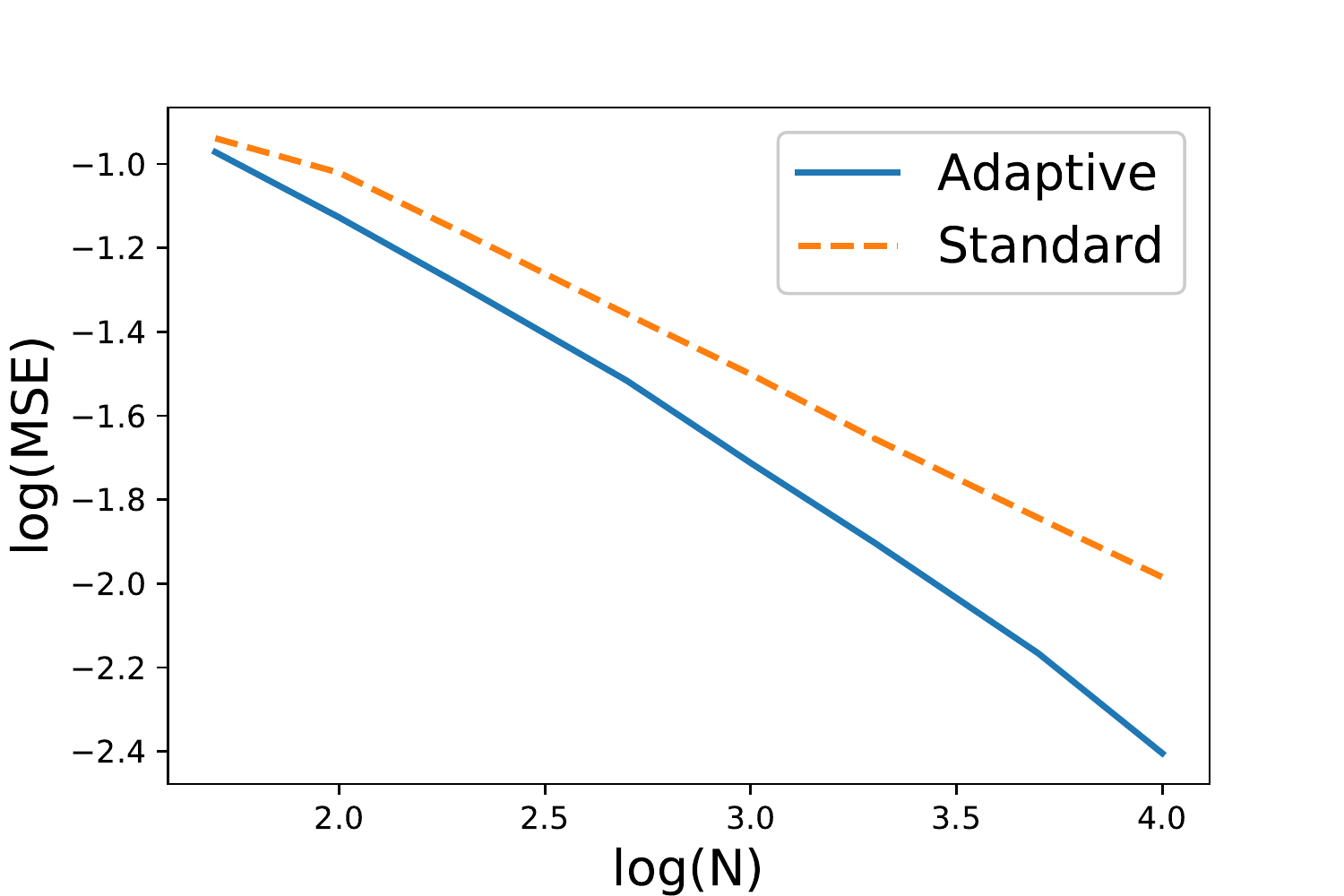}}		 		
	\end{center}
\caption{Numerical simulation for two dimensional distributions.}\label{fig:compare2}
\end{figure}
In Fig. \ref{fig:compare} (a)-(c), the underlying distributions are: (a)  Laplace distribution; (b) $t_5$ distribution; and (c) $t_2$ distribution, respectively. In these experiments, the underlying regression function is $\eta(x)=\cos(5x)$. In (d), the feature distribution is one dimensional standard Laplace distribution, and $\eta(x)$ is periodic, with period $2$. For $0\leq x<2$,
\begin{eqnarray}
\eta(x)=\left\{
\begin{array}{ccc}
2x &\text{if} & x\in [0,\frac{1}{2})\\
2(1-x) &\text{if} & x\in [\frac{1}{2},\frac{3}{2})\\
2(x-2) &\text{if} & x\in [\frac{3}{2},2)
\end{array}
\right..
\label{eq:eta2}
\end{eqnarray}

Fig.~\ref{fig:compare2} shows the simulation results for two dimensional cases. In both (a) and (b), the feature vector follows the standard Gaussian distribution. In (a), the regression function is $\eta(\mathbf{x})=\cos(2x_1+2x_2)$. This is an example where $\eta$ depends on two components of $\mathbf{X}$. In (b), $\eta(\mathbf{x})=\cos(2x_1)$, which implies that there is only one useful feature among two features. With these settings, we show the base-10 log-log plot of the classification error rate minus the Bayes risk, with respect to the training sample size. The test sample size is fixed at $N'=1000$, and each point in the curves is averaged over $1,000$ trials. 

In addition, in Table \ref{tab:classification}, we list the comparison of the empirical convergence rates and the theoretical convergence rates for both our adaptive kNN classifier and the standard one. The empirical convergence rates are the negative slope of the curves in Figures \ref{fig:compare} and \ref{fig:compare2}, which are calculated by linear regression. Theoretical rates are calculated from Theorems \ref{thm:standard} and \ref{thm:adaptive}. For presentation convenience, if the theoretical convergence rate is $\mathcal{O}(N^{-\mu})$, we then list $\mu$ in Table \ref{tab:classification}. For all cases in the simulation, we have $\alpha=1$. For Gaussian and Laplace distributions, $\beta=1$. For $t_5$ and $t_2$ distributions, $\beta=5/6$ and $0.5$, respectively. 
\begin{table}
	\begin{center}
	\caption{Comparison of convergence rates of kNN classification}
	\label{tab:classification}
	\begin{tabular}{ccc}
		\hline
		Distribution & Standard & Adaptive  \\
		& Empirical/Theoretical & Empirical/Theoretical\\
		\hline
		Fig 1(a) & 0.51/0.50 & 0.80/0.57 \\
		Fig 1(b) & 0.50/0.45 & 0.79/0.54 \\
		Fig 1(c) & 0.43/0.40 & 0.62/0.50 \\
		Fig 1(d) & 0.49/0.50 & 0.77/0.57 \\
		Fig 2(a) & 0.48/0.50 & 0.58/0.50 \\
		Fig 2(b) & 0.48/0.50 & 0.61/0.50 \\
		\hline
	\end{tabular}
\end{center}
\end{table}

The results from Figures \ref{fig:compare} and \ref{fig:compare2} show that the excess risk of both the standard kNN and our adaptive kNN method converges to zero with a stable convergence rate. Our result also indicates that the convergence rate of the standard kNN classifier is not optimal, due to the large kNN distances at the regions with low density. For all these distributions, our adaptive classifier significantly outperforms the standard one. If the sample size is large, then the advantage of our new classifier is more obvious. This observation is consistent with our theoretical analysis. Moreover, as discussed before, the convergence rate for the standard kNN method is obtained using the optimal choice of $k$ that depends on unknown parameters $\alpha$ and $\beta$. In practice, such information is not available, thus the convergence rate is usually worse if we pick a suboptimal selection rule of $k$.

\textcolor{black}{We also observe from Table \ref{tab:classification} that for all these six cases, the empirical convergence rates of the standard kNN classifiers are close to the theoretical rate indicated in Theorem \ref{thm:standard}. However, the adaptive kNN method actually converges faster than the theoretical results from Theorem \ref{thm:adaptive}. This phenomenon can be explained by the fact that all results derived in Section \ref{sec:cla} are rates of uniform convergence. For a specific distribution, the bound may not be tight. }

\subsection{Regression}
Now we compare the empirical convergence rates of the adaptive and the standard kNN regression.

We first present results for one dimension case. In our numerical experiments, $X$ follows standard Laplace, $t_2$ and Cauchy distribution, respectively, corresponding to different tail strength. For each distribution, we conduct simulations with $\eta(x)=\sin (x)$ and $\eta(x)=x$ separately, in which the former one is an example of bounded regression function, and the latter one is an example of unbounded regression function. Similar to the simulation of kNN classification, we still tune the parameter $k$ and $K$ optimally at $N=500$ first. Moreover, we fix $q=4/(d+4)=0.8$ and $A=0.5$ for our adaptive method for all of these experiments.

Fig. \ref{fig:compare-reg} shows the log-log plot of the mean square estimation error against the training sample size $N$, for some one dimensional distributions, in which each curve is averaged over $500$ trials. It can be shown that the expectation of mean square error is the excess risk $R-R^*$. From Fig. \ref{fig:compare-reg}, we observe that our new adaptive regression method significantly outperforms the standard kNN method, especially for large sample sizes. For $t_2$ and Cauchy distributions, we only plot the result with a bounded regression function. For the unbounded case, the curves are not plotted since the estimated MSE error of both regression methods are unstable for these two distributions. This phenomenon is reasonable, because for these two distributions, $\mathbb{E}[X^2]$ is infinite, which violates Assumption \ref{ass:unbounded}. As a result, $R-R^*$ is infinite. 
\begin{figure}[h]
	\begin{center}
		\subfigure[Bounded Laplace]{\includegraphics[width=0.48\linewidth,height=0.33\linewidth]{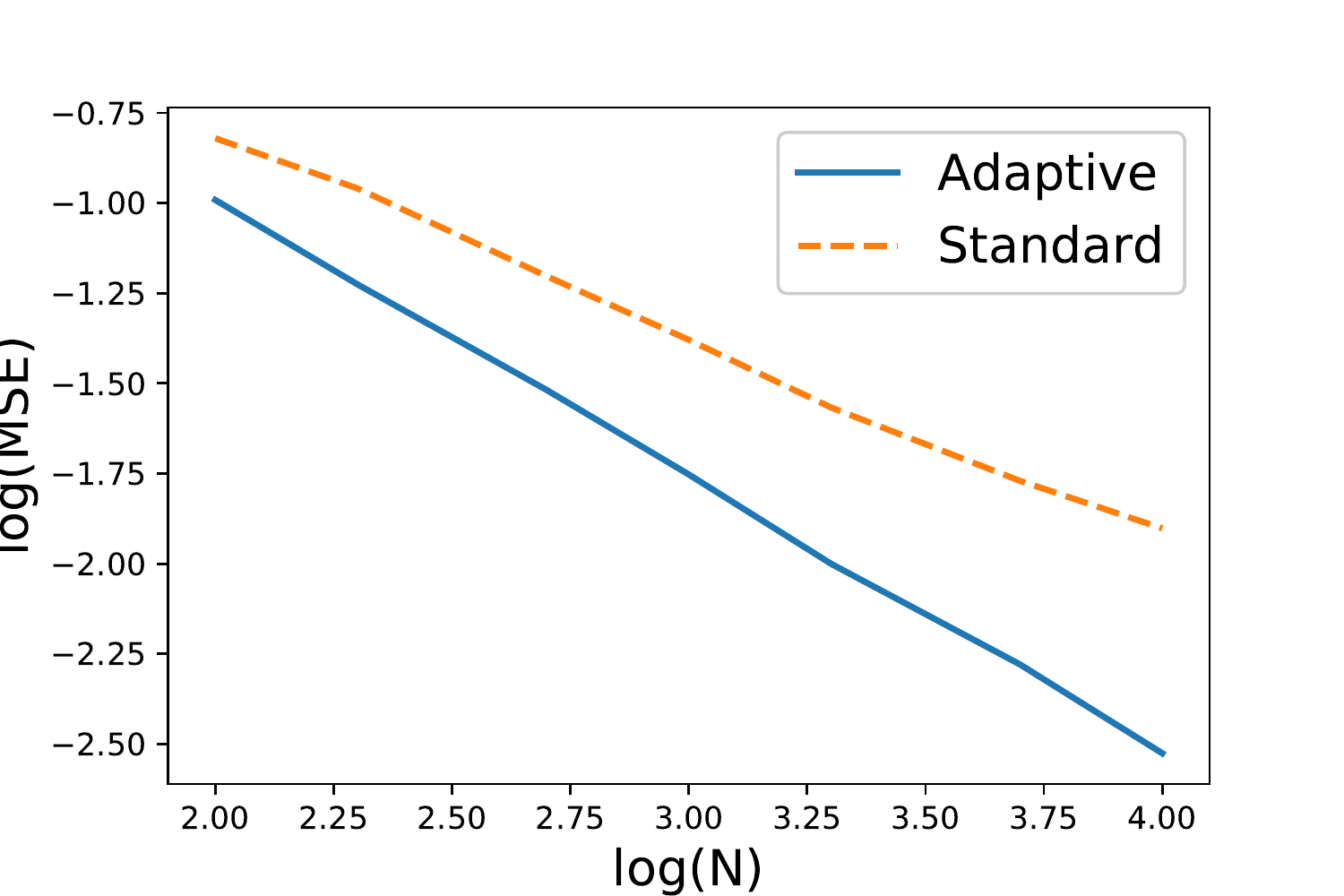}}
		\subfigure[Unbounded Laplace]{\includegraphics[width=0.48\linewidth,height=0.33\linewidth]{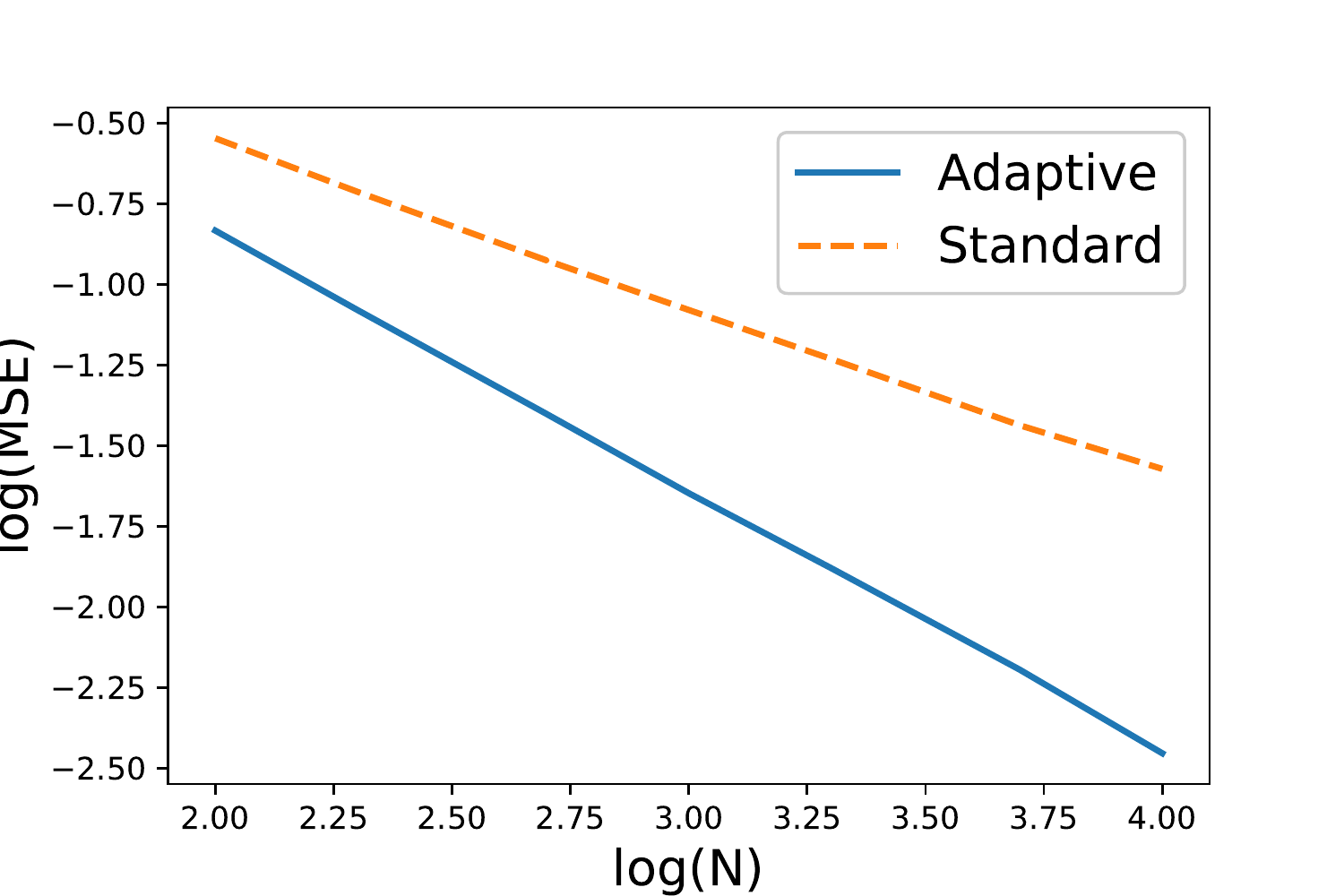}}
		\subfigure[Bounded $t_2$]{\includegraphics[width=0.48\linewidth,height=0.33\linewidth]{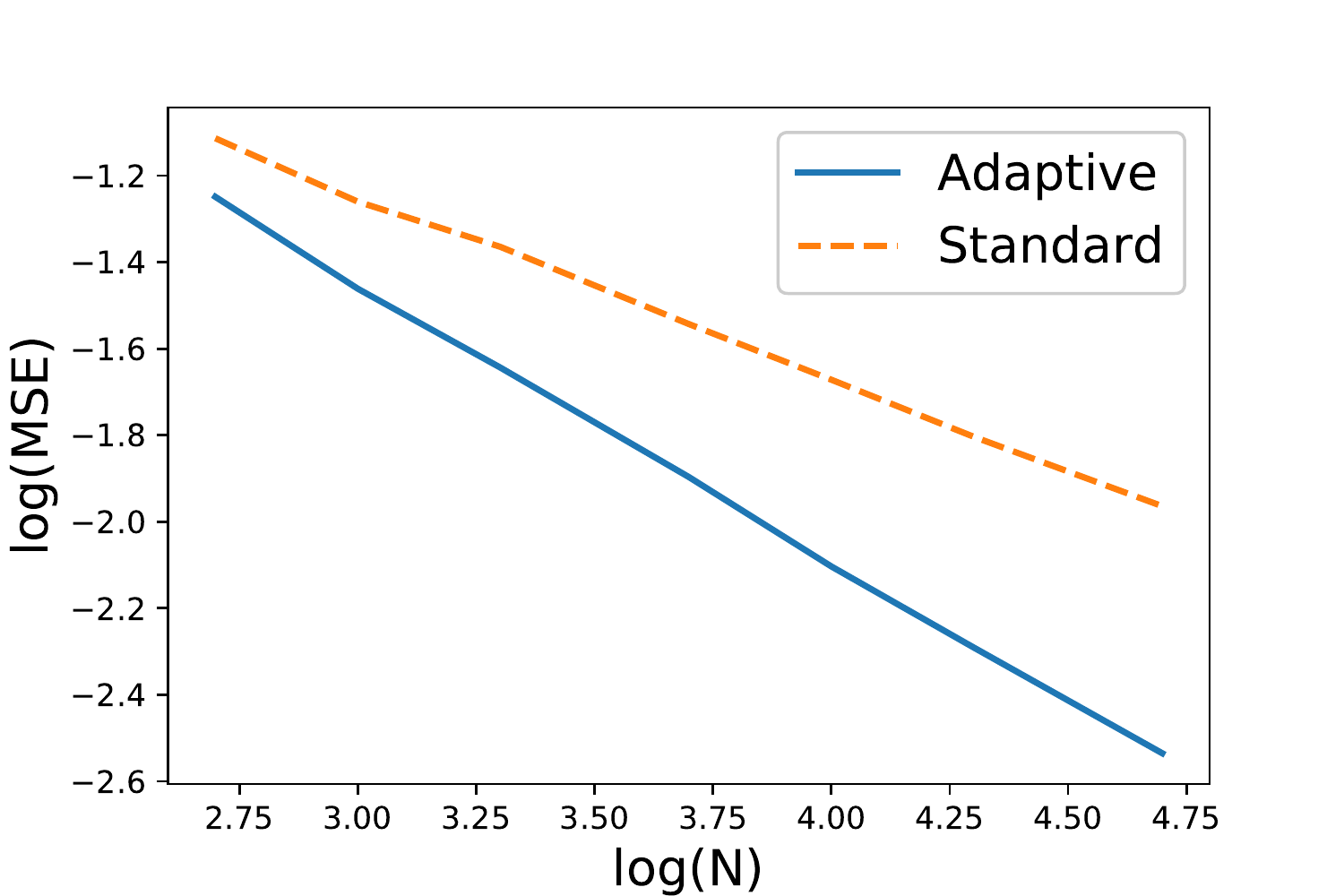}}
		\subfigure[Bounded Cauchy]{\includegraphics[width=0.48\linewidth,height=0.33\linewidth]{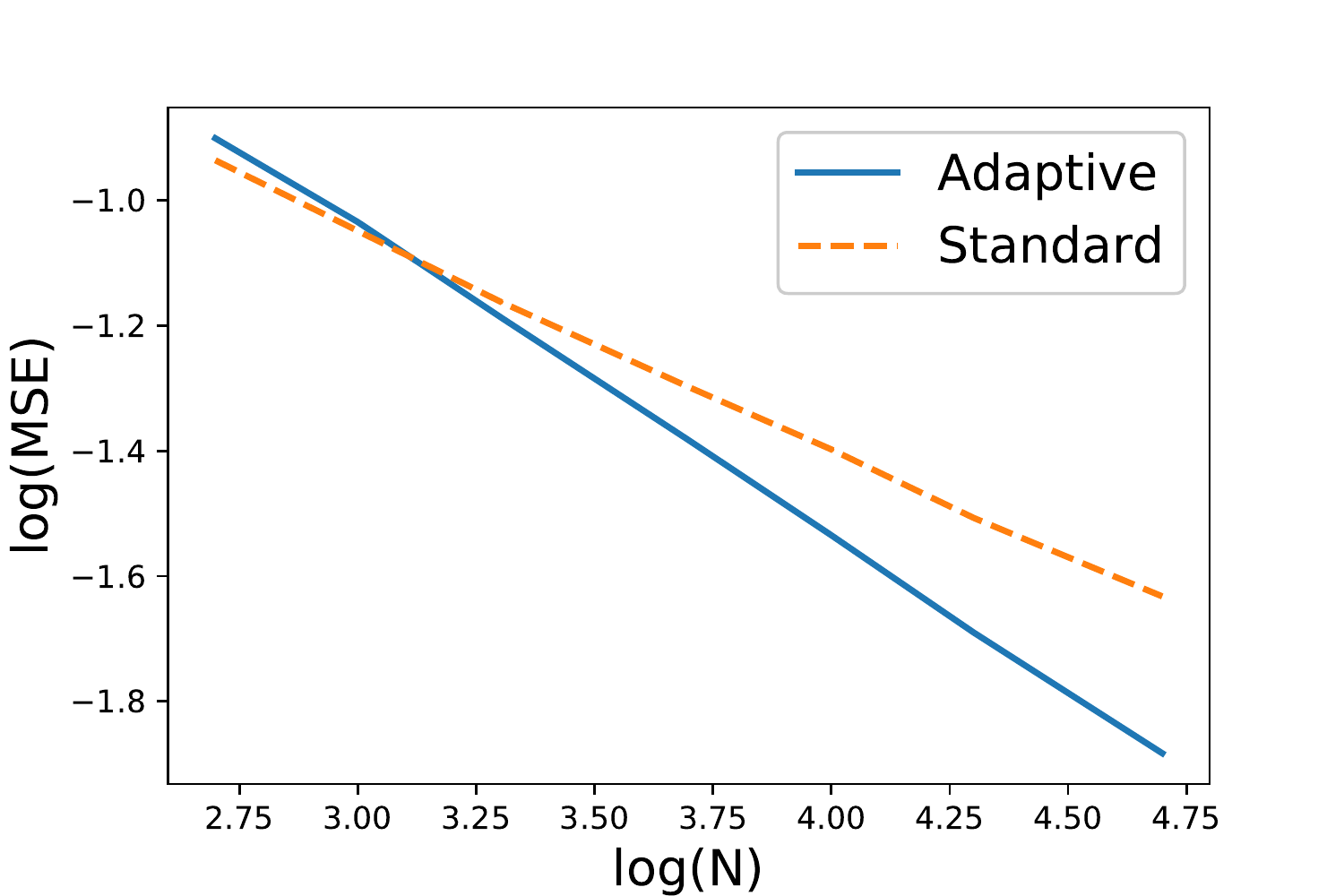}}										
		\caption{MSE of the proposed adaptive kNN regression method vs the standard kNN regression with $d=1$. Blue line corresponds to adaptive regression. Orange dashed line corresponds to the standard kNN regression.}	\label{fig:compare-reg}
	\end{center}
\end{figure}

Fig. \ref{fig:compare-reg2} shows simulation results for distributions with higher dimensions. We focus on Laplace distribution with $d=2$ and $d=3$. The parameter selection follows the same rule as the case with $d=1$, and the parameter $q$ of the adaptive method is selected according to $q=4/(d+4)$.

\begin{figure}[h!]
	\subfigure[Unbounded Laplace, $d=2$]{\includegraphics[width=0.48\linewidth,height=0.33\linewidth]{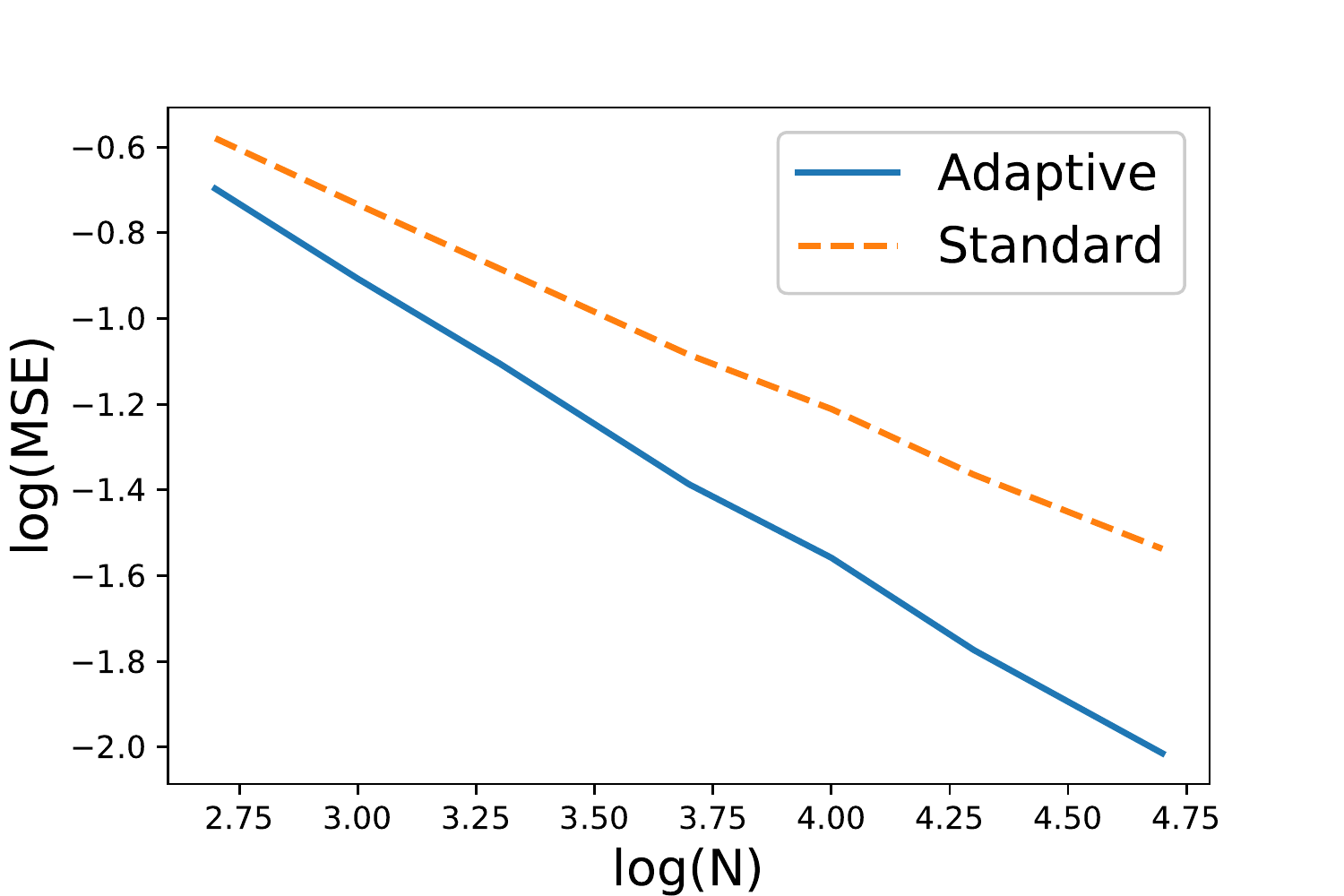}}
	\subfigure[Unbounded Laplace, $d=3$]{\includegraphics[width=0.48\linewidth,height=0.33\linewidth]{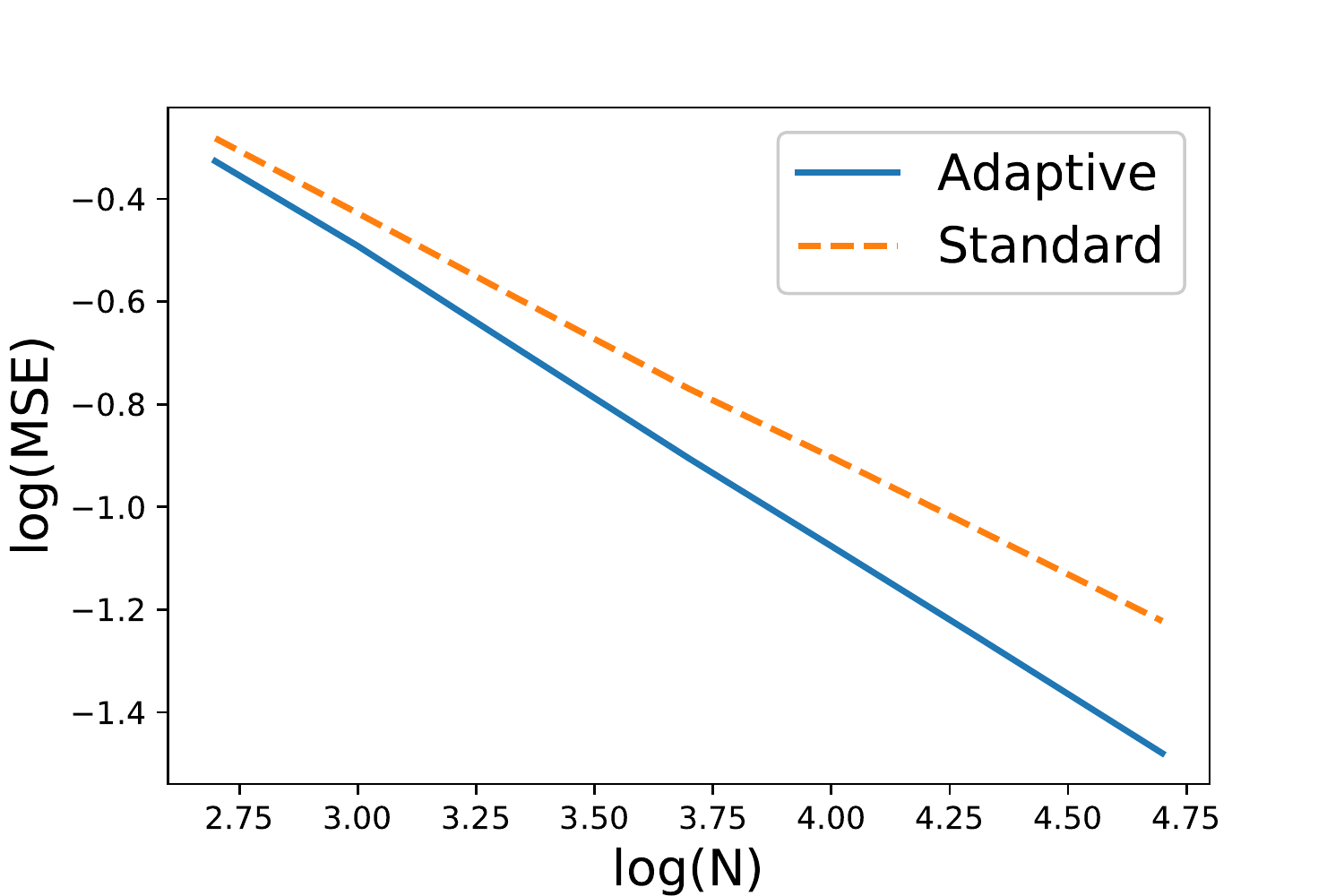}}
	\caption{MSE of the proposed adaptive kNN regression method vs the standard kNN regression for higher dimensions. Blue line corresponds to adaptive regression. Orange dashed line corresponds to the standard kNN regression.}
	\label{fig:compare-reg2}
\end{figure}
Moreover, we compare the empirical and theoretical convergence rates in Table \ref{tab:regression}. We use the same methods to calculate these rates as are already used in Table \ref{tab:classification}. 

\begin{table*}
	\begin{center}
	\caption{Comparison of convergence rates of kNN regression}
	\label{tab:regression}
	\begin{tabular}{ccccc}
		\hline
		Distribution & dimension & $\eta$& Standard & Adaptive  \\
		&&& Empirical/Theoretical & Empirical/Theoretical\\
		\hline
		Laplace &1 &Bounded & 0.55/0.50 &  0.77/0.80 \\
		Laplace &1 &Unbounded& 0.51/0.50 & 0.81/0.80 \\
		$t_2$ &1&Bounded & 0.42/0.40 & 0.65/0.66 \\
		Cauchy &1&Bounded & 0.34/0.33 & 0.50/0.50 \\
		Laplace &2& Unbounded & 0.48/0.50 & 0.66/0.67\\
		Laplace &3& Unbounded & 0.48/0.50 & 0.57/0.57\\
		\hline
	\end{tabular}
\end{center}
\end{table*}

The results in Fig.\ref{fig:compare-reg}, Fig.\ref{fig:compare-reg2} and Table \ref{tab:regression} agree with our theoretical prediction. All of the above results show that the adaptive kNN regression significantly outperforms the standard one, and the empirical convergence rate agrees well with our theoretical prediction.

\section{Conclusion}\label{sec:conc}
In this paper, we have analyzed the convergence rate of the standard kNN classification and regression, and derived a minimax lower bound for all nonparametric classification methods, under some tail, smoothness and margin assumptions. Building on these analysis, which show that there is a gap between the convergence rates of the standard kNN and the minimax bound, we have then proposed an adaptive kNN method to close this gap, which can be used for both classification and regression problems. In the proposed method, we select $k$ based on the number of training samples in the fixed radius nearest neighbor of the test point. We have obtained an upper bound of the excess risk of the proposed method that matches the minimax lower bound under some general assumptions. For regression problems, we have extended our analysis to cases with unbounded regression function $\eta$. Since the most important parameter of our adaptive kNN method, i.e., $q$, can be selected without any knowledge of the underlying distribution, the parameter tuning of our adaptive kNN method is simpler than the standard one. Moreover, numerical results illustrate that our new method significantly outperforms the standard kNN method, especially for large training datasets.	

\color{black}
	\ifCLASSOPTIONcaptionsoff
	\newpage
	\fi



\newpage
\appendices

\section{Proof of Proposition \ref{prop:smoothness} (B)}\label{sec:smoothness}
Here, we prove that if the conditions in Proposition 1 (B) are satisfied, then with Assumption 1 (d), Assumption 1(c) is also satisfied. For presentation simplicity, in the following proof, we assume that $\ell_2$ norm is used. According to the definition of function $\eta$, we have
\begin{eqnarray}
|\eta(B(\mathbf{x},r))-\eta(\mathbf{x})|=\left|\frac{1}{\text{P}(B(\mathbf{x},r))}\int_{B(\mathbf{x},r)} f(\mathbf{u})\eta(\mathbf{u})d\mathbf{u}-\frac{1}{\text{P}(B(\mathbf{x},r))}\int_{B(\mathbf{x},r)} f(\mathbf{u})\eta(\mathbf{x})d\mathbf{u}\right|.
\end{eqnarray}
By Taylor expansion, we have 
$\eta(\mathbf{u})=\eta(\mathbf{x})+\nabla\eta(\mathbf{x})^T (\mathbf{u}-\mathbf{x})+\frac{1}{2}(\mathbf{u}-\mathbf{x})^T\nabla^2\eta(\xi(\mathbf{u})) (\mathbf{u}-\mathbf{x})$
for some $\xi(\mathbf{u})$ that is in between $\mathbf{u}$ and $\mathbf{x}$. Hence
\begin{eqnarray}
|\eta(B(\mathbf{x},r))-\eta(\mathbf{x})|=\left|\frac{1}{\text{P}(B(\mathbf{x},r))} \int_{B(\mathbf{x},r)} f(\mathbf{u})\left(\nabla \eta(\mathbf{x})^T (\mathbf{u}-\mathbf{x})+\frac{1}{2}(\mathbf{u}-\mathbf{x})^T \nabla^2 \eta(\xi(\mathbf{u}))(\mathbf{u}-\mathbf{x})\right)d\mathbf{u} \right|.\nonumber
\end{eqnarray}
Note that due to symmetry, we have $\int_{B(\mathbf{x},r)} f(\mathbf{x})\nabla\eta(\mathbf{x})^T(\mathbf{u}-\mathbf{x})d\mathbf{u}=0$. Then for any $r<D'$,
\begin{eqnarray}
\int_{B(\mathbf{x},r)} f(\mathbf{u})\nabla \eta(\mathbf{x})^T (\mathbf{u}-\mathbf{x})d\mathbf{u}&=&\int_{B(\mathbf{x},r)} (f(\mathbf{u})-f(\mathbf{x}))\nabla \eta(\mathbf{x})^T (\mathbf{u}-\mathbf{x}) d\mathbf{u}\nonumber\\
&\leq & \int_{B(\mathbf{x},r)} \left(\underset{\mathbf{v}\in B(\mathbf{x},D')}{\sup} \frac{\norm{\nabla \eta(\mathbf{x})}\norm{\nabla f(\mathbf{v})}}{f(\mathbf{x})}\right) f(\mathbf{x})\norm{\mathbf{u}-\mathbf{x}}^2 d\mathbf{u}\nonumber\\
&\leq & C_0 r^2 f(\mathbf{x})V(B(\mathbf{x},r)).
\end{eqnarray}
In addition,
\begin{eqnarray}
\int \frac{1}{2}f(\mathbf{u})(\mathbf{u}-\mathbf{x})^T \nabla^2 \eta(\xi(\mathbf{u}))(\mathbf{u}-\mathbf{x})d\mathbf{u}\leq \frac{1}{2}C_H\int f(\mathbf{u})\norm{\mathbf{u}-\mathbf{x}}^2 d\mathbf{u}\leq \frac{1}{2}C_H r^2 \text{P}(B(\mathbf{x},r)).
\end{eqnarray}
Therefore,
\begin{eqnarray}
|\eta(B(\mathbf{x},r))-\eta(\mathbf{x})|&\leq& \frac{1}{\text{P}(B(\mathbf{x},r))}\left(C_0 r^2 f(\mathbf{x}) V(B(\mathbf{x},r))+\frac{1}{2} C_H r^2 \text{P}(B(\mathbf{x},r))\right)\\
&\leq &\left(\frac{C_0}{C_d}+\frac{1}{2} C_H\right)r^2,\nonumber
\end{eqnarray}
in which the last step uses Assumption 1 (d).
\section{Proof of Theorem \ref{thm:standard}: Convergence rate of the standard kNN classification}\label{sec:original}
\color{black}
\subsection{Upper Bound}\label{sec:original-ub}
In this section, we prove the convergence rate of an upper bound of the excess risk of the standard kNN classification under Assumption 1. Recall that $R$ and $R^*$ are defined as
$R=\text{P}(g(\mathbf{X})\neq Y)$, $R^*=\text{P}(g^*(\mathbf{X})\neq Y)$,
in which
\begin{eqnarray}
g(\mathbf{x})&=&\text{sign}(\hat{\eta}(\mathbf{x})),\label{eq:gknn}\\
g^*(\mathbf{x})&=&\text{sign}(\eta(\mathbf{x})),\label{eq:gbayes}
\end{eqnarray}
and $\hat{\eta}(\mathbf{x})$ is defined in~\eqref{eq:etahat}.

Hence we have
\begin{eqnarray}
R-R^*&=&\mathbb{E}\left[\text{P}(g(\mathbf{X})\neq Y|\mathbf{X}=\mathbf{x})-\text{P}(g^*(\mathbf{X})\neq Y|\mathbf{X}=\mathbf{x}) \right] \nonumber\\
&=&\mathbb{E}[\mathbf{1}(g(\mathbf{X})\neq g^*(\mathbf{X}))|\eta(\mathbf{X})|].
\label{eq:exrisk}
\end{eqnarray}
We divide the support into four regions:
\begin{eqnarray}
S_1&=&\{\mathbf{x}|f(\mathbf{x})\geq N^{-\delta}, |\eta(\mathbf{x})|>2\Delta\}; \label{eq:S1def}\\
S_2&=&\{\mathbf{x}|f(\mathbf{x})\geq N^{-\delta}, |\eta(\mathbf{x})|\leq 2\Delta\};\\
S_3&=&\left\{ \mathbf{x}|C_0 \frac{k}{N}<f(\mathbf{x})<N^{-\delta} \right\};\\
S_4&=&\left\{\mathbf{x}|f(\mathbf{x})\leq C_0\frac{k}{N}\right\},
\end{eqnarray}
in which $\Delta$ and $\delta$ are two parameters that will be determined later, and $C_0=2/(C_dv_dD^d)$.

Then we can rewrite the excess risk as
\begin{eqnarray}
R-R^*=\sum_{i=1}^4 \mathbb{E}\left[\mathbf{1}(g(\mathbf{X})\neq g^*(\mathbf{X} ))|\eta(\mathbf{X})|\mathbf{1}(\mathbf{X}\in S_i)\right] := I_1+I_2+I_3+I_4,
\label{eq:Rdecomp}
\end{eqnarray}
in which $\mathbf{1}(\cdot)$ is the indication function. In the following, we bound these four terms separately. 

Firstly, for $I_2$ we have
\begin{eqnarray}
I_2&=&\mathbb{E}[\mathbf{1}(g(\mathbf{X})\neq g^* (\mathbf{X}))|\eta(\mathbf{X})|\mathbf{1}(f(\mathbf{X})\geq N^{-\delta},|\eta(\mathbf{X})|\leq 2\Delta)] \leq  \text{P}(|\eta(\mathbf{X})|\leq 2\Delta) 2\Delta \leq  C_a (2\Delta)^{\alpha+1},\nonumber\\
\label{eq:I2}
\end{eqnarray}
in which the last inequality uses Assumption 1 (a). 

Secondly, for $I_4$, we have
\begin{eqnarray}
I_4=\mathbb{E}\left[\mathbf{1}(g(\mathbf{X})\neq g^*(\mathbf{X}))|\eta(\mathbf{X})|\mathbf{1}\left(f(\mathbf{X})<C_0\frac{k}{N}\right)\right] \leq \text{P}\left(f(\mathbf{X})\leq C_0\frac{k}{N}\right) \leq  C_b \left(C_0 \frac{k}{N}\right)^\beta,
\label{eq:I3}
\end{eqnarray}
in which we use Assumption 1 (b).

Now it remains to bound $I_1$ and $I_3$. 

\textbf{Bound of $I_1$.}
Define
\begin{eqnarray}
a_N=\left(\frac{2k}{C_d v_d} N^{\delta-1}\right)^\frac{1}{d},
\label{eq:an}
\end{eqnarray}
in which $v_d$ is the volume of the ball with unit radius, depending on the distance metric we use. For example, if we use Euclidean distance, then $v_d=\pi^\frac{d}{2}/\Gamma(\frac{d}{2}+1)$, in which $\Gamma$ is the Gamma function, 
\begin{eqnarray}\Gamma(u)=\int_0^\infty t^{u-1} e^{-t}dt,\; u>0.\label{eq:Gamma}\end{eqnarray}
From now on, we assume that
\begin{eqnarray}
\underset{N\rightarrow \infty}{\lim} kN^{\delta-1}=0.
\label{eq:klim}
\end{eqnarray} 
\eqref{eq:klim} will be checked after we finish the proof. With \eqref{eq:klim}, for sufficiently large $N$, $a_N<D$. According to Assumption 1 (d), for all $\mathbf{x}\in S_1$,
\begin{eqnarray}
\mathbf{P}(B(\mathbf{x},a_N))&\geq & C_d f(\mathbf{x}) v_d a_N^d= C_d f(\mathbf{x})v_d \frac{2k}{C_dv_d} N^{\delta-1}\geq \frac{2k}{N},
\end{eqnarray}
in which the last inequality uses the definition of $S_1$ \eqref{eq:S1def}. Denote $\rho$ as the distance from the test point $\mathbf{x}$ to its $(k+1)$-th nearest neighbor, then according to Chernoff inequality, for all $\mathbf{x}\in S_1$,
\begin{eqnarray}
\text{P}(\rho>a_N|\mathbf{x})&\leq &e^{-N\text{P}(B(\mathbf{x},a_N))} \left(\frac{eN\text{P}(B(\mathbf{x},a_N))}{k}\right)^k\leq  e^{-2k} (2e)^k= e^{-k(1-\ln 2)}.
\label{eq:largerho}
\end{eqnarray}
Recall the definition of $g$ and $g^*$ in \eqref{eq:gknn} and \eqref{eq:gbayes}, if $\text{sign}(\hat{\eta}(\mathbf{x}))\neq \text{sign}(\eta(\mathbf{x}))$, then we must have $|\hat{\eta}(\mathbf{x})-\eta(\mathbf{x})|>|\eta(\mathbf{x})|$. Therefore, for all $\mathbf{x}\in S_1$, the misclassification probability is bounded by
\begin{eqnarray}
&&\hspace{-8mm} \text{P}(g(\mathbf{x})\neq g^*(\mathbf{x}))\nonumber\\
  &\leq&  \text{P}(\rho>a_N|\mathbf{x})+\text{P}(\rho\leq a_N,|\hat{\eta}(\mathbf{x})-\eta(\mathbf{x})|>|\eta(\mathbf{x}) | | \mathbf{x}) \nonumber \\
&\leq &e^{-k(1-\ln 2)} + \text{P}(\rho\leq a_N, |\hat{\eta}(\mathbf{x})-\eta(B(\mathbf{x},\rho))|>|\eta(\mathbf{x})|-|\eta(\mathbf{x})-\eta(B(\mathbf{x},\rho))| |\mathbf{x}),
\label{eq:errprob}
\end{eqnarray}
in which the last inequality uses \eqref{eq:largerho} and triangular inequality. For the second term, according to Assumption 1 (c), and let $\Delta=C_ca_N^2$:
\begin{eqnarray}
|\eta(B(\mathbf{x},\rho))-\eta(\mathbf{x})|\leq C_c\rho^2 \leq C_c a_N^2 :=\Delta.
\label{eq:delta}
\end{eqnarray}
Recall that $\hat{\eta}(\mathbf{x})=\frac{1}{k}\sum_{i=1}^k Y^{(i)}$. Here $Y^{(i)}$ are not independent. However, we can show that the Hoeffding's inequality still holds. We provide a proof in Appendix~\ref{sec:lemmas}, Lemma \ref{lem:conc}. Based on Lemma \ref{lem:conc} in Appendix~\ref{sec:lemmas}, \eqref{eq:errprob} and \eqref{eq:delta}, we have for all $\mathbf{x}\in S_1$,
\begin{eqnarray}
\text{P}(g(\mathbf{x})\neq g^*(\mathbf{x}))\leq e^{-k(1-\ln 2)}+2e^{-\frac{1}{2}k(\eta(\mathbf{x})-\Delta)_+^2},
\label{eq:errprob2}
\end{eqnarray}
in which we define $U_+=\max\{U,0\}$. Then for all $\mathbf{x}\in S_1$, we have
\begin{eqnarray}
\eta(\mathbf{x})-\Delta>\frac{1}{2}\eta(\mathbf{x}).
\label{eq:eta}
\end{eqnarray}  
Plug \eqref{eq:eta} into \eqref{eq:errprob2}, then $I_1$ can be bounded as following:
\begin{eqnarray}
I_1&=&\mathbb{E}[\mathbf{1}(g(\mathbf{X})\neq g^*(\mathbf{X}))|\eta(\mathbf{X})|\mathbf{1}(\mathbf{X}\in S_1)] \nonumber \\
&\leq & e^{-k(1-\ln 2)}+2\mathbb{E}[|\eta(\mathbf{X})|e^{-\frac{1}{8}k|\eta(\mathbf{X})|^2}]. 
\label{eq:I1}
\end{eqnarray}
The first term of \eqref{eq:I1} decays exponentially. For the second term, using Assumption \ref{ass:basic}(a),
\begin{eqnarray}
\mathbb{E}[|\eta(\mathbf{X})|e^{-\frac{1}{8}k|\eta(\mathbf{X})|^2}]&=&\frac{1}{\sqrt{k}}\mathbb{E}\left[\left(\sqrt{k}|\eta(\mathbf{X})|e^{-\frac{1}{16}k|\eta(\mathbf{X})|^2}\right)e^{-\frac{1}{16}k|\eta(\mathbf{X})|^2}\right]\nonumber\\
&\leq &\frac{2\sqrt{2}e^{-\frac{1}{2}}}{\sqrt{k}}\mathbb{E}\left[e^{-\frac{1}{16}k|\eta(\mathbf{X})|^2}\right]\nonumber\\
&=&\frac{2\sqrt{2}e^{-\frac{1}{2}}}{\sqrt{k}}\int_0^1 \text{P}\left(e^{-\frac{1}{16}k|\eta(\mathbf{X})|^2}>t\right)dt\nonumber\\
&=&\frac{2\sqrt{2}e^{-\frac{1}{2}}}{\sqrt{k}}\int_0^1 \text{P}\left(|\eta(\mathbf{X})|<4\sqrt{\frac{\ln(1/t)}{k}}\right)dt\nonumber\\
&\leq &\frac{2\sqrt{2}e^{-\frac{1}{2}}}{\sqrt{k}}\int_0^1 C_a\left(4\sqrt{\frac{\ln(1/t)}{k}}\right)^\alpha dt.
\label{eq:etabound}
\end{eqnarray}
Therefore this term decays with $\mathcal{O}\left(k^{-\frac{\alpha+1}{2}}\right)$. Combine two terms in \eqref{eq:I1}, we get
\begin{eqnarray}
I_1= \mathcal{O}\left(k^{-\frac{\alpha+1}{2}}\right).
\end{eqnarray} 

\textbf{Bound of $I_3$.} According to the definition of $I_3$ in \eqref{eq:Rdecomp},
\begin{eqnarray}
I_3&=&\mathbb{E}\left[\mathbf{1}(g(\mathbf{X})\neq g^*(\mathbf{X}))|\eta(\mathbf{X})|\mathbf{1}\left(C_0\frac{k}{N}<f(\mathbf{X})<N^{-\delta}\right)\right]\nonumber\\
&\leq & \mathbb{E}\left[|\hat{\eta}(\mathbf{X})-\eta(\mathbf{X})|\mathbf{1}\left(C_0\frac{k}{N}<f(\mathbf{X})<N^{-\delta}\right)\right],
\end{eqnarray}
in which the inequality holds because $g(\mathbf{x})=\text{sign}(\hat{\eta}(\mathbf{x}))$ and $g^*(\mathbf{x})=\text{sign}(\eta(\mathbf{x}))$.

Define $r_N(\mathbf{x})$ as:
\begin{eqnarray}
r_N(\mathbf{x})=\left(\frac{2k}{NC_dv_df(\mathbf{x})}\right)^\frac{1}{d}.
\end{eqnarray}
In $S_3$, $f(\mathbf{x})>C_0k/N$, thus it can be shown that $r_N(\mathbf{x})\leq D$ always holds if $\mathbf{x} \in S_3$. Then according to Assumption \ref{ass:basic}(d),
$\text{P}(B(\mathbf{x}.r_N(\mathbf{x})))\geq C_df(\mathbf{x})v_dr_N^d(\mathbf{x})=2k/N$,
and
\begin{eqnarray}
\text{P}(\rho>r_N(\mathbf{x}))\leq e^{-N\text{P}(B(\mathbf{x},r_N(\mathbf{x})))}\left(\frac{eN\text{P}(B(\mathbf{x},r_N(\mathbf{x})))}{k}\right)^k\leq e^{-(1-\ln 2)k}.
\end{eqnarray}
To give a bound of $I_3$, note that
\begin{eqnarray}
\mathbb{E}[|\hat{\eta}(\mathbf{x})-\eta(\mathbf{x})|]&\leq& \sqrt{\mathbb{E}[(\hat{\eta}(\mathbf{x})-\eta(\mathbf{x}))^2]}\nonumber\\
&=&\sqrt{\Var[\hat{\eta}(\mathbf{x})]+(\mathbb{E}[\hat{\eta}(\mathbf{x})-\eta(\mathbf{x})])^2}\nonumber\\
&\leq & \sqrt{\Var[\hat{\eta}(\mathbf{x})]}+|\mathbb{E}[\hat{\eta}(\mathbf{x})]-\eta(\mathbf{x})|.
\label{eq:I3bound}
\end{eqnarray}
For the first term in \eqref{eq:I3bound}, define $U_i$ as a random variable drawn from $f(\cdot |\mathbf{X}\in B(\mathbf{x},\rho))$, for $i=1,\ldots, k$. $U_1, \ldots, U_k$ are conditionally i.i.d given $\rho$. Then
\begin{eqnarray}
&&\hspace{-8mm}\Var\left[\left.\frac{1}{k}\sum_{i=1}^k Y^{(i)}\right|\rho\right]\nonumber\\
&\overset{(a)}{=}&\Var\left[\frac{1}{k}\mathbb{E}\left[\left.\sum_{i=1}^k Y^{(i)} \right|\rho,\mathbf{X}^{(1)},\ldots,\mathbf{X}^{(N)}\right]\right]+\mathbb{E}\left[\Var\left[\left.\frac{1}{k}\sum_{i=1}^k Y^{(i)}\right|\rho,\mathbf{X}^{(1)},\ldots,\mathbf{X}^{(N)}\right]\right]\nonumber\\
&\overset{(b)}{\leq}&\Var\left[\left.\frac{1}{k}\sum_{i=1}^k \eta(\mathbf{X}^{(i)})\right|\rho\right]+\frac{1}{k}\nonumber\\
&=&\Var\left[\left.\frac{1}{k}\sum_{i=1}^k \eta(U_i)\right|\rho\right]+\frac{1}{k}\nonumber\\
&\overset{(c)}{=}&\frac{1}{k}\Var[\eta(U_1)|\rho]+\frac{1}{k}\nonumber\\
&\leq&\frac{2}{k},
\label{eq:variance}
\end{eqnarray}
in which (a) uses the total law of variance. In (b), note that $Y^{(i)}$ are conditionally independent given $\rho$ and the position of testing point and all training samples, and the conditional variance of $Y^{(i)}$ is no more than $1$. (c) uses the fact that $U_1,\ldots, U_k$ are conditionally i.i.d given $\rho$.

For the second term in \eqref{eq:I3bound}, 
\begin{eqnarray}
&&\hspace{-6mm}|\mathbb{E}[\hat{\eta}(\mathbf{x})]-\eta(\mathbf{x})|\nonumber\\&\leq& \text{P}(\rho>r_N(\mathbf{x}))\left|\mathbb{E}[\hat{\eta}(\mathbf{x})|\rho>r_N(\mathbf{x})]-\eta(\mathbf{x})\right|+\text{P}(\rho\leq r_N(\mathbf{x}))\left|\mathbb{E}[\hat{\eta}(\mathbf{x})|\rho\leq r_N(\mathbf{x})]-\eta(\mathbf{x})\right|\nonumber\\
&\leq & 2\text{P}(\rho>r_N(\mathbf{x}))+\left|\mathbb{E}[\eta(B(\mathbf{x},\rho))|\rho\leq r_N(\mathbf{x})]-\eta(\mathbf{x})\right|\nonumber\\
&\leq & 2e^{-k(1-\ln 2)}+C_c \left(\frac{2k}{NC_dv_df(\mathbf{x})}\right)^\frac{2}{d}.
\end{eqnarray}
Therefore, using Lemma \ref{lem:tail} in Appendix~\ref{sec:lemmas}, we have
\begin{eqnarray}
I_3&=&\int_{S_3} \mathbb{E}|\hat{\eta}(\mathbf{x})-\eta(\mathbf{x})|f(\mathbf{x})d\mathbf{x}\nonumber\\
&\leq & \int_{S_3} \left[\sqrt{\frac{2}{k}}+2e^{-k(1-\ln 2)}+C_c \left(\frac{2k}{NC_dv_df(\mathbf{x})}\right)^\frac{2}{d}\right]f(\mathbf{x})d\mathbf{x}\nonumber\\
&=&\left\{
\begin{array}{ccc}
\mathcal{O}\left(k^{-\frac{1}{2}}N^{-\beta\delta}\right) +\mathcal{O}\left(\left(\frac{k}{N}\right)^\beta\right)&\text{if} & \beta<\frac{2}{d}\\
\mathcal{O}\left(k^{-\frac{1}{2}}N^{-\beta\delta}\right) +\mathcal{O}\left(\left(\frac{k}{N}\right)^\frac{2}{d}\ln N\right)&\text{if} & \beta=\frac{2}{d}\\
\mathcal{O}\left(k^{-\frac{1}{2}}N^{-\beta\delta}\right) + \mathcal{O}\left(N^{-\beta\delta}(kN^{\delta-1})^\frac{2}{d}\right) &\text{if} & \beta>\frac{2}{d}.
\end{array}
\right.
\label{eq:I3final}
\end{eqnarray}

Combine $I_1$, $I_2$, $I_3$ and $I_4$, the excess risk can be expressed as
\begin{eqnarray}
R-R^*= \mathcal{O}\left(k^{-\frac{\alpha+1}{2}}\right)+\mathcal{O}\left(\Delta^{\alpha+1}\right)+\mathcal{O}\left(\left(\frac{k}{N}\right)^\beta\right)+I_3,
\label{eq:excess}
\end{eqnarray} 
in which $I_3$ is expressed in \eqref{eq:I3final}. Moreover, according to \eqref{eq:an}, \eqref{eq:delta}, we have $\Delta \sim (kN^{\delta-1})^{2/d}$.

Adjust $\delta$ as well as the growth rate of $k$ over $N$, we get the following results.

The optimal growth rate of $k$ is
\begin{eqnarray}
k\sim \left\{
\begin{array}{ccc}
N^{\frac{2\beta}{2\beta+\alpha+1}} &\text{if} &\beta\leq \frac{2}{d}\\
N^\frac{4\beta}{2\alpha+\beta(d+4)} &\text{if} &\beta>\frac{2}{d}
\end{array}.
\right.
\end{eqnarray}
The corresponding convergence rate is:
\begin{eqnarray}
R-R^*=\left\{
\begin{array}{ccc}
\mathcal{O}\left(N^{-\frac{\beta(\alpha+1)}{2\beta+\alpha+1}}\right) &\text{if} &\beta<\frac{2}{d}\\
\mathcal{O}\left(N^{-\frac{\beta(\alpha+1)}{2\beta+\alpha+1}}\ln N\right) &\text{if} &\beta=\frac{2}{d}\\
\mathcal{O}\left(N^{-\frac{2\beta(\alpha+1)}{\beta d+2(\alpha+2\beta)}}\right) &\text{if} &\beta>\frac{2}{d}.
\end{array}
\right.
\end{eqnarray}
The proof of an upper bound of the excess risk of the standard kNN classification is complete.
\subsection{Lower Bound}\label{sec:original-lb}
We prove the following statements separately:
\begin{eqnarray}
\underset{(f,\eta)\in \mathcal{S}}{\sup} (R-R^*)&\gtrsim& k^{-\frac{1+\alpha}{2}};
\label{eq:clslb1}\\
\underset{(f,\eta) \in \mathcal{S}}{\sup} (R-R^*)&\gtrsim& \left(\frac{k}{N}\right)^\beta;
\label{eq:clslb2}\\
\underset{(f,\eta)\in \mathcal{S}}{\sup} (R-R^*)&\gtrsim&\underset{0\leq \delta\leq 1}{\sup} \min\left\{ N^{-\beta\delta} (kN^{\delta-1})^\frac{2}{d}, (kN^{\delta-1})^\frac{2(\alpha+1)}{d} \right\}.
\label{eq:clslb3}
\end{eqnarray}

\textbf{Proof of \eqref{eq:clslb1}}.
Let $\mathbf{X}$ be uniformly distributed in $A\cup B$, and let $\eta(\mathbf{x})=a>0$ for all $\mathbf{x}\in A$, $\eta(\mathbf{x})=1$ for all $\mathbf{x}\in B$, in which $A$ and $B$ are two disjoint sets.

Then for any $\mathbf{x}\in A$,
\begin{eqnarray}
\text{P}\left(g(\mathbf{x})\neq g^*(\mathbf{x})\right)=\text{P}(g(\mathbf{x})=-1)=\text{P}\left(\frac{1}{k} \sum_{i=1}^k Y^{(i)}<0\right).
\end{eqnarray}
If $a\sim 1/\sqrt{k}$, then $\text{P}(g(\mathbf{x})\neq g^*(\mathbf{x}))\rightarrow c>0$.

Note that according to Assumption \ref{ass:basic}(a), $\text{P}(|\eta(\mathbf{X})|\leq a)\leq C_aa^\alpha$. Thus $\text{P}(A)\leq C_a a^\alpha$. Now we set $a\sim 1/\sqrt{k}$, and let $\text{P}(A)=C_a a^\alpha$, then
\begin{eqnarray}
R-R^*=\mathbb{E}[\mathbf{1}(g(\mathbf{X})\neq g^*(\mathbf{X}))|\eta(\mathbf{X})|]\geq  a\text{P}(A)\text{P}(g(\mathbf{X})\neq g^*(\mathbf{X}))\sim  a^{1+\alpha}.
\label{eq:clslb1final}
\end{eqnarray}
Substitute $a$ in \eqref{eq:clslb1final} with $1/\sqrt{k}$, the proof of \eqref{eq:clslb1} is complete. 

\textbf{Proof of \eqref{eq:clslb2}}.
Construct $(n+1)$ cubes $I_1, \ldots, I_{n+1}$:
\begin{eqnarray}
I_j=\left\{ \mathbf{x}|4j-1<x_1<4j+1, x_2,\ldots,x_d\in [-1,1] \right\}.
\end{eqnarray}
Let random variable $\mathbf{X}$ be supported in these cubes. Within each cube, the distribution is uniform. Let $m$ be the pdf value in the first $n$ cubes. $n$ and $m$ will change with $k$ and $N$. For the $(n+1)$-th cube, the pdf should be $(1-2^d nm)/2^d$, so that the total probability mass of all $(n+1)$ cubes is 1. For any $k$ and $N$, let $m=k/(3\times 2^dN)$.

Let $\eta(\mathbf{x})=(-1)^j$ for $\mathbf{x}\in I_j$. For $\mathbf{x}\notin \cup_{j=1}^{n+1} I_j$, $\eta(\mathbf{x})$ can be set arbitrarily as long as it satisfies Assumption \ref{ass:basic}(c) with constant $C_c$. It can be shown that for $j=3,\ldots, n-2$, if $2k/3N<\text{P}(B(\mathbf{x},\rho))<4k/3N$, then $B(\mathbf{x},\rho)$ contains more than $2$ and less than $4$ cubes among $I_1,\ldots, I_n$. In this case, the average value of $\eta$ in $B(\mathbf{x},\rho)$, i.e. $\eta(B(\mathbf{x},\rho))$, has opposite sign with $\eta(\mathbf{x})$. As a result, if for a specific test point $\mathbf{x}$, $2k/3N<\text{P}(B(\mathbf{x},\rho))<4k/3N$,
$\text{P}(g(\mathbf{x})\neq g^*(\mathbf{x})|\text{P}(B(\mathbf{x},\rho))=\text{P}(\text{sign}(\hat{\eta}(\mathbf{x})\neq \text{sign}(\eta(\mathbf{x})))>1/2$.
Hence,
\begin{eqnarray}
\text{P}(g(\mathbf{x})\neq g^*(\mathbf{x}))\geq \frac{1}{2}\text{P}\left(\frac{2k}{3N}<\text{P}(B(\mathbf{x},\rho))<\frac{4k}{3N}\right).
\end{eqnarray}
It can be shown that $\text{P}(2k/3N<\text{P}(B(\mathbf{x},\rho))<4k/3N)\rightarrow 1$ as $k\rightarrow \infty$. Thus we have $\text{P}(g(\mathbf{x})\neq g^*(\mathbf{x}))\rightarrow 1/2$ for all $\mathbf{x}\in I_3,\ldots, I_{n-2}$. Then
\begin{eqnarray}
R-R^*=\mathbb{E}[\mathbf{1}(g(\mathbf{X})\neq g^*(\mathbf{X}))|\eta(\mathbf{X})|]\gtrsim  \text{P}(\mathbf{X}\in I_3\cup \ldots \cup I_{n-2})\sim  nm.
\end{eqnarray}
Assumption (b) requires that $\text{P}(f(\mathbf{X})\leq m)\leq C_bm^\beta$, thus
\begin{eqnarray}
R-R^*\gtrsim m^\beta \sim \left(\frac{k}{N}\right)^\beta.
\end{eqnarray}
The proof of \eqref{eq:clslb2} is complete.

\textbf{Proof of \eqref{eq:clslb3}}.
Construct $(n+1)$ cubes in similar way as the previous step, i.e. the proof of \eqref{eq:clslb2}. However, now we use adaptive cube size. Let
\begin{eqnarray}
L=\frac{1}{2}\left(kN^{\delta-1}\right)^\frac{1}{d}
\end{eqnarray}
for some $0\leq \delta\leq 1$. Then construct $(n+1)$ cubes $I_1, \ldots, I_{n+1}$ as following:
\begin{eqnarray}
I_j=\left\{\mathbf{x}|(4j-1)L<x_1<(4j+1)L, x_2,\ldots, x_d\in [-L,L] \right\}.
\end{eqnarray}
Let the distribution be uniform within each cube, and the pdf in $I_1, \ldots, I_n$ are the same and denoted as $m$. Similar to the proof of \eqref{eq:clslb2}, $m$, $n$ change with $k$ and $N$. Here we let $m=(1/3)N^{-\delta}$. Then $\text{P}(\mathbf{X}\in I_j)=2^d mL^d=k/(3N)$ for $j=1,\ldots, n$. Moreover, let
\begin{eqnarray}
\eta(\mathbf{x})=\frac{1}{4}(-1)^j \left(kN^{\delta-1}\right)^\frac{2}{d}
\end{eqnarray}
for $\mathbf{x}\in I_j$. For $\mathbf{x}\notin I_j$, $\eta(\mathbf{x})$ need to be set to satisfy Assumption \ref{ass:basic}(c) with constant $C_c$. To show that such $\eta$ exists, define $\eta_0(\mathbf{x})$ as the $\eta$ constructed in the previous step, i.e. proof of \eqref{eq:clslb2}. Then let $\eta(\mathbf{x})=L^2\eta_0(\mathbf{x}/L)$, then as long as $|\eta_0(B(\mathbf{x},r))-\eta_0(\mathbf{x})|\leq C_cr^2$ for any $\mathbf{x}$ and $r>0$, $|\eta(B(\mathbf{x},r))-\eta(\mathbf{x})|\leq C_cr^2$ also holds for any $\mathbf{x}$ and $r>0$. 

The remaining proof is similar to the proof of \eqref{eq:clslb2}. For $\mathbf{x}\in I_j$, $j=3,\ldots,n-2$,
\begin{eqnarray}
\text{P}(g(\mathbf{x})\neq g^*(\mathbf{x}))>\frac{1}{2}\text{P}\left(\frac{2k}{3N}<\text{P}(B(\mathbf{x},\rho))<\frac{4k}{3N}\right).
\end{eqnarray}
Then
\begin{eqnarray}
R-R^*=\mathbb{E}[\mathbf{1}(g(\mathbf{X})\neq g^*(\mathbf{X}))|\eta(\mathbf{X})|]\gtrsim \text{P}(\mathbf{X}\in I_3\cup \ldots \cup I_{n-2})(kN^{\delta-1})^\frac{2}{d}\sim  nm\left(kN^{\delta-1}\right)^\frac{2}{d}.
\end{eqnarray}
According to Assumptions \ref{ass:basic} (a) and (b),
$\text{P}\left(|\eta(\mathbf{X}|\leq\left(kN^{\delta-1}\right)^\frac{2}{d}\right)\leq C_a\left(kN^{\delta-1}\right)^\frac{2\alpha}{d}$, and $\text{P}(f(\mathbf{X})\leq m)\leq C_bm^\beta$,
thus $nm\lesssim \min\{ (kN^{\delta-1}), m^\beta\}$. Since we have already set $m=(1/3)N^{-\delta}$, 
\begin{eqnarray}
R-R^*\gtrsim \min\left\{N^{-\beta\delta} \left(kN^{\delta-1}\right)^\frac{2}{d}, \left(kN^{\delta-1}\right)^\frac{2(\alpha+1)}{d}\right\}.
\end{eqnarray}
The above equation holds for any $0\leq \delta\leq 1$. Hence \eqref{eq:clslb3} holds.

Based on \eqref{eq:clslb1}, \eqref{eq:clslb2}, \eqref{eq:clslb3}, we get
\begin{eqnarray}
\underset{k}{\inf}\underset{(f,\eta)\in \mathcal{S}}{\sup} (R-R^*)=\Omega\left(N^{-\min\left\{ \frac{\beta(\alpha+1)}{2\beta+\alpha+1},\frac{2\beta(\alpha+1)}{\beta d+2(\alpha+2\beta)} \right\}}\right).
\end{eqnarray}
\color{black}
\section{Proof of Theorem \ref{thm:minimax}: Minimax convergence rate of classification}\label{sec:minimax}
The minimax lower bound of the convergence rate is defined as $\underset{g}{\inf}\underset{(f,\eta)\in \mathcal{S}}{\sup} (R-R^*)$. To obtain this bound, a common approach is to find an appropriate finite subset of $\mathcal{S}$, so that the problem can be reduced to a hypothesis testing problem. The minimax rate among this subset can then be used as a lower bound of the minimax rate over the whole class $\mathcal{S}$. A detailed introduction of this type of method can be found in \cite{tsybakov2009introduction}.

In our proof, the construction of the finite subset $\mathcal{S}^*$ of $\mathcal{S}$ is based on Assouad cube method, which has been used in \cite{audibert2007fast} and \cite{gadat2016classification}.

Let $f(\mathbf{x})$ be supported on $(n_0+1)$ balls, in which $n_0$ will be determined later:
\begin{eqnarray}
f(\mathbf{x})=m\sum_{j=1}^{n_0} \mathbf{1}(\mathbf{x}\in B(\mathbf{a}_j,L))+m_0\mathbf{1}(\mathbf{x}\in B(\mathbf{a}_0,L_0)),
\label{eq:pdf}
\end{eqnarray}
in which $L$ and $L_0$ depend on the sample size $N$. $m$, $m_0$ are also two parameters that need to be determined later. 


We require that $\norm{\mathbf{a}_i-\mathbf{a}_j}>r_0$ for all $i\neq j$, in which
\begin{eqnarray}
r_0=2\max\left\{\left(\frac{1}{C_c}\right)^\frac{1}{2},1 \right\}L,
\label{eq:r0}
\end{eqnarray}
in which $C_c$ is the constant in Assumption 1 (c). We arrange $\mathbf{a}_0, \ldots, \mathbf{a}_{n_0}$ in the following way. Define
\begin{eqnarray}
r_M=\inf\{r|\mathcal{P}(\bar{B}(\mathbf{0},r),r_0)\geq n_0 \},
\label{eq:L0}
\end{eqnarray}
in which $\bar{B}(\mathbf{0},r)$ is the closure of $B(\mathbf{0},r)$, and $\mathcal{P}$ denotes the packing number. Since $\bar{B}(\mathbf{0},r)$ is a closed set, we know that the packing number should be right continuous in $r$. As a result, we have $\mathcal{P}(\bar{B}(\mathbf{0},r_M),r_0)=n_0$. We can then pick $\mathbf{a}_1,\ldots,\mathbf{a}_{n_0}$, so that the pairwise distances between them are no less than $r_0$. Besides, we pick $\mathbf{a}_0$ such that $\norm{\mathbf{a}_0}>r_M+L_0$. Under this condition, $B(\mathbf{a}_0,L_0)$ does not intersect with any other $n_0$ balls $B(\mathbf{a}_j,r_0)$.

We also let $\mathbf{a}_0$ to be sufficiently far away from $\mathbf{a}_j, j=1,\ldots,n_0$. Furthermore, define 
\begin{eqnarray}
\eta_\mathbf{v}(\mathbf{x})=\sum_{j=1}^{n_0} \mathbf{v}(j) L^2\mathbf{1}(\mathbf{x}\in B(\mathbf{a}_j,L)),
\label{eq:etav}
\end{eqnarray}
in which $\mathbf{v}\in \{-1,1\}^{n_0}$. To ensure that \eqref{eq:pdf} is a normalized pdf, we have the following constraints:
\begin{eqnarray}
n_0 mv_dL^d+m_0v_dL_0^d=1,
\end{eqnarray}
in which, as defined before, $v_d$ is the volume of the unit radius ball.


Recall that $\mathcal{S}$ is the set of all pdfs and regression functions that satisfy Assumption 1 (a)-(d). We have the following lemma:
\begin{lem}\label{lem:restriction}
	$(f, \eta_\mathbf{v})$ satisfies Assumption 1(a)-(d) for $\forall \mathbf{v}\in \{-1,1\}^{n_0}$ if: (1) $n_0 mv_dL^d\leq C_aL^{2\alpha}$; (2) $n_0 mv_dL^d\leq C_bm^\beta;$ (3) $L\leq 1$.
\end{lem}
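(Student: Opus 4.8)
The plan is to verify the four parts of Assumption~\ref{ass:basic} one at a time for the pair $(f,\eta_{\mathbf v})$ defined in \eqref{eq:pdf} and \eqref{eq:etav}, showing that conditions (1)--(3) of the lemma, together with the geometric features already built into the construction (the separation \eqref{eq:r0} and the far placement of $\mathbf a_0$), are exactly what each part requires. The essential simplification is that both $|\eta_{\mathbf v}(\mathbf X)|$ and $f(\mathbf X)$ are two-valued: $|\eta_{\mathbf v}|$ equals $L^2$ on $\cup_{j=1}^{n_0}B(\mathbf a_j,L)$ and $0$ elsewhere, while $f$ equals $m$ on the small balls and $m_0$ on $B(\mathbf a_0,L_0)$. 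Hence the margin and tail conditions each collapse to a single binding inequality. I would also note at the outset that condition~(3), $L\le 1$, forces $|\eta_{\mathbf v}|=L^2\le 1$, so that $\eta_{\mathbf v}$ is a legitimate regression function (the conditional mean of a $\{-1,1\}$-valued label).

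For Assumption~\ref{ass:basic}(a), since $|\eta_{\mathbf v}(\mathbf X)|\in\{0,L^2\}$, the quantity $\text{P}(0<|\eta_{\mathbf v}(\mathbf X)|\le t)$ is $0$ for $t<L^2$ and equals the small-ball mass $n_0 m v_dL^d$ for $t\ge L^2$; the requirement $\text{P}(0<|\eta_{\mathbf v}(\mathbf X)|\le t)\le C_a t^\alpha$ is therefore tightest at $t=L^2$, where it reads $n_0 m v_dL^d\le C_a L^{2\alpha}$, which is precisely condition~(1). Similarly, for Assumption~\ref{ass:basic}(b), $\text{P}(f(\mathbf X)\le t)$ jumps from $0$ to $n_0 m v_dL^d$ at $t=m$ and to $1$ at $t=m_0$; the low-density step is controlled at $t=m$ by $n_0 m v_dL^d\le C_b m^\beta$, which is condition~(2), while the step at $t=m_0$ needs only $C_b m_0^\beta\ge 1$. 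The latter holds automatically because the normalization in \eqref{eq:pdf} forces $m_0 v_dL_0^d\to 1$, so that $m_0=\Theta(1)$, and $C_b$ is taken sufficiently large.

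I expect Assumption~\ref{ass:basic}(c) to be the main obstacle, since $\eta_{\mathbf v}$ is piecewise constant, hence discontinuous across ball boundaries, so the local average $\eta_{\mathbf v}(B(\mathbf x,r))$ must be controlled at every scale $r$. The key observation is that $\eta_{\mathbf v}(B(\mathbf x,r))=\eta_{\mathbf v}(\mathbf x)$ whenever $B(\mathbf x,r)$ meets only the single ball containing $\mathbf x$, on which $\eta_{\mathbf v}$ is constant; the difference becomes nonzero only once $B(\mathbf x,r)$ reaches a region with a different value of $\eta_{\mathbf v}$. In that case I would bound $|\eta_{\mathbf v}(B(\mathbf x,r))-\eta_{\mathbf v}(\mathbf x)|$ by $2L^2$ times the fraction of the probability in $B(\mathbf x,r)$ lying outside the ball containing $\mathbf x$. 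The separation radius $r_0=2\max\{C_c^{-1/2},1\}L$ in \eqref{eq:r0} is engineered so that this deviation is dominated by $C_c r^2$ uniformly in $r$: the factor $C_c^{-1/2}$ guarantees $C_c r^2\ge 2L^2$ at the scale where neighbouring small balls first contribute appreciably, while $\max\{\cdot,1\}$ keeps the balls disjoint. For $\mathbf x$ in or near $B(\mathbf a_0,L_0)$, where $\eta_{\mathbf v}(\mathbf x)=0$, the difference stays $0$ until $B(\mathbf x,r)$ reaches the small-ball cluster, and since $\mathbf a_0$ is placed sufficiently far, the required $r$ is so large that $C_c r^2\ge 2L^2$ holds trivially.

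Finally, for Assumption~\ref{ass:basic}(d) I would exploit the covering structure implicit in \eqref{eq:L0}: because $\mathbf a_1,\dots,\mathbf a_{n_0}$ form a maximal $r_0$-separated set in $\bar B(\mathbf 0,r_M)$, they are also $r_0$-dense there, so the small balls occupy a fixed fraction of space at scales comparable to $r_0$. For small $r$ and any $\mathbf x$ in the support this yields $V(B(\mathbf x,r)\cap\mathrm{supp}\,f)\ge C_d V(B(\mathbf x,r))$ with a dimension-dependent constant, the worst case being $\mathbf x$ on a ball boundary, where convexity still leaves a constant fraction of $B(\mathbf x,r)$ inside the ball; hence $\text{P}(B(\mathbf x,r))\ge C_d f(\mathbf x)V(B(\mathbf x,r))$. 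For $\mathbf x$ outside the support the inequality is trivial because $f(\mathbf x)=0$. Taking $D$ proportional to $\min\{L,L_0\}$ makes this hold for all $0<r<D$, which completes the verification that $(f,\eta_{\mathbf v})\in\mathcal S$ for every $\mathbf v\in\{-1,1\}^{n_0}$.
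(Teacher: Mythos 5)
Your verification of Assumption 1 (a), (b) and (c) follows essentially the same route as the paper: parts (a) and (b) reduce to the single binding inequalities at $t=L^2$ and $t=m$ (conditions (1) and (2)), with the jump at $t=m_0$ absorbed because $m_0=\Theta(1)$; part (c) is the same two-regime argument, with the difference vanishing while $B(\mathbf{x},r)$ sees only one ball and bounded by $2L^2\leq C_c r_0^2\leq C_c r^2$ once $r$ is large enough to reach a second ball, which is exactly what the choice of $r_0$ in \eqref{eq:r0} is designed for. Your observation that condition (3), $L\leq 1$, is what makes $|\eta_{\mathbf{v}}|=L^2\leq 1$ a legitimate conditional mean of a $\{-1,1\}$ label is a point the paper's own proof leaves implicit.

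The substantive divergence is in part (d), and it is where I would push back. You take $D\propto\min\{L,L_0\}$, which lets you dispose of Assumption 1 (d) using only the elementary ``constant fraction of a small ball near its boundary'' estimate (the paper's $v_d'$ argument) and entirely skip the packing-number computation. This does verify the literal statement ``$\exists D>0$'', but in Construct 1 of the minimax proof $L\to 0$ with $N$, so your $D$ shrinks with the sample size. The paper instead takes $D<\min\{r_M,L_0\}$ with $r_M$ bounded away from zero in the relevant parameter regime, and the entire second half of its proof of part (d) — the case $r\geq 4r_0$, inequality \eqref{eq:packlb}, and the maximality of the $r_0$-separated family — exists precisely to establish the mass lower bound for $r$ up to a \emph{constant} scale, with a constant $C_d$ independent of $N$. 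This matters because the class $\mathcal{S}$ over which the minimax lower bound is taken must be cut out by constants that do not depend on $N$ (the paper says this explicitly for $C_d$, and the same logic applies to $D$): a larger $D$ is a \emph{stronger} requirement, so exhibiting only a vanishing $D_N$ places the constructed pairs in a strictly larger class than the one for which the matching upper bounds hold, and the lower bound would then not be over the intended $\mathcal{S}$. You gesture at the covering/density structure of the $\mathbf{a}_j$'s, which is the right ingredient, but you never use it because your choice of $D$ makes it unnecessary; to make the lemma serve its purpose you need to carry out that counting argument for $r$ between $r_0$ and a fixed constant, as the paper does in \eqref{eq:d3}.
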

\begin{proof}
	Please see Appendix \ref{sec:restriction} for proof.
\end{proof}
Define
\begin{eqnarray}
\mathcal{S}^*=\{(f,\eta_\mathbf{v})|\mathbf{v}\in\{-1,1\}^{n_0} \},
\label{eq:Sstar}
\end{eqnarray}
where $f$ and $\eta_\mathbf{v}$ satisfy the requirements in Lemma \ref{lem:restriction}, then $\mathcal{S}^*\subset \mathcal{S}$. For an arbitrary classifier $g$,
\begin{eqnarray}
\underset{(f,\eta)\in \mathcal{S}}{\sup} (R-R^*)\geq \underset{(f,\eta)\in \mathcal{S}^*}{\sup} (R-R^*).
\label{eq:minoration}
\end{eqnarray}
To bound the right hand side of \eqref{eq:minoration}, we use the following lemma.
\begin{lem}\label{lem:assouad}
	(Modified from \cite{audibert2004classification}, Lemma 5.1)
	\begin{eqnarray}
	\underset{(f,\eta)\in \mathcal{S^*}}{\sup} (R-R^*)\geq \frac{1-L^2\sqrt{N\omega}}{2} n_0\omega L^2,
	\end{eqnarray}
	in which $\omega$ is the probability mass of $B(\mathbf{a}_j,L)$ for $j=1,\ldots,n_0$:
	\begin{eqnarray}
	\omega=\text{P}(B(\mathbf{a}_j,L))=mv_dL^d.
	\label{eq:omega}
	\end{eqnarray}
\end{lem}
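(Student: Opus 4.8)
The plan is to treat this as a direct application of Assouad's cube technique to the family $\{(f,\eta_\mathbf{v})\}$, combining the averaging trick with a two-point testing bound that is controlled by an information divergence between the $N$-fold sample laws. First I would rewrite the excess risk of a classifier $g$ (built from the training set $\mathcal{D}=\{(\mathbf{X}_i,Y_i)\}_{i=1}^N$) under parameter $\mathbf{v}$. Since $\eta_\mathbf{v}$ vanishes off the $n_0$ balls and equals $\mathbf{v}(j)L^2$ on $B(\mathbf{a}_j,L)$, we have $|\eta_\mathbf{v}|=L^2$ and $g^*=\mathbf{v}(j)$ there, while $\eta_\mathbf{v}=0$ on $B(\mathbf{a}_0,L_0)$ contributes nothing. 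Plugging into $R-R^*=\mathbb{E}[\mathbf{1}(g(\mathbf{X})\neq g^*(\mathbf{X}))|\eta(\mathbf{X})|]$ and using $\text{P}(B(\mathbf{a}_j,L))=\omega$ yields
\begin{eqnarray}
R_\mathbf{v}-R_\mathbf{v}^*=L^2\omega\sum_{j=1}^{n_0}\mathbb{E}_{\mathcal{D}}\left[\text{P}\left(g(\mathbf{X})\neq\mathbf{v}(j)\mid\mathbf{X}\in B(\mathbf{a}_j,L)\right)\right].
\end{eqnarray}

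Next I would lower bound the supremum over $\mathbf{v}\in\{-1,1\}^{n_0}$ by the average over a uniform prior on $\mathbf{v}$. For each fixed $j$, I would split this average over the other coordinates $\mathbf{v}_{-j}$ and the two values of $\mathbf{v}(j)$. Flipping only $\mathbf{v}(j)$ leaves the marginal law of the features and all labels outside $B(\mathbf{a}_j,L)$ unchanged; the two data laws $P_+^{(j)}$ and $P_-^{(j)}$ of $\mathcal{D}$ differ solely through the conditional law of $Y$ on $B(\mathbf{a}_j,L)$, which is Bernoulli with $\text{P}(Y=1)=(1\pm L^2)/2$. Predicting $\mathbf{v}(j)$ by $g(\mathbf{X})$ at a test point in $B(\mathbf{a}_j,L)$ is therefore a two-hypothesis test based on $\mathcal{D}$ (the uniform law of the test point on the ball is common to both hypotheses), so the inner average error is at least $\frac{1}{2}(1-\norm{P_+^{(j)}-P_-^{(j)}}_{\mathrm{TV}})$; since the relevant divergence involves only ball $j$, this bound is independent of $j$ and of $\mathbf{v}_{-j}$. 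Summing the $n_0$ identical coordinate contributions gives
\begin{eqnarray}
\sup_{(f,\eta)\in\mathcal{S}^*}(R-R^*)\geq\frac{1-\norm{P_+^{(j)}-P_-^{(j)}}_{\mathrm{TV}}}{2}\,n_0\omega L^2.
\end{eqnarray}

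It then remains to show $\norm{P_+^{(j)}-P_-^{(j)}}_{\mathrm{TV}}\leq L^2\sqrt{N\omega}$, which I expect to be the main obstacle, since this is where the crucial $\sqrt{N}$ scaling enters and where the coefficient must be tracked carefully. I would tensorize over the $N$ i.i.d.\ samples: the single-sample Hellinger affinity between $P_+^{(j)}$ and $P_-^{(j)}$ equals $1-\omega(1-\sqrt{1-L^4})$ (only the ball-$j$ event matters), so the $N$-fold affinity is its $N$-th power, and the total variation distance is bounded by the resulting Hellinger distance. Using $1-\sqrt{1-L^4}\le L^4$ together with $(1-x)^N\ge 1-Nx$ (or, equivalently, Pinsker's inequality with the single-sample Kullback--Leibler divergence $\omega L^2\ln\frac{1+L^2}{1-L^2}$) produces a bound of the form $\le cL^2\sqrt{N\omega}$; matching the stated constant $c=1$ requires the sharper single-sample divergence estimate underlying the cited Audibert--Tsybakov lemma, valid for $L\le 1$. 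Substituting this distance bound into the previous display completes the proof.
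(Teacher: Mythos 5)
Your proposal follows essentially the same route as the paper: an Assouad-type reduction of each ball to a binary hypothesis test on the training data, followed by a total-variation/Hellinger bound, so the overall structure is sound. The one place where your argument does not quite deliver the stated inequality is the distance bound: direct tensorization of the single-sample Hellinger affinity $1-\omega\bigl(1-\sqrt{1-L^4}\bigr)$ over all $N$ samples gives $\norm{P_+^{(j)}-P_-^{(j)}}_{\mathrm{TV}}\leq\sqrt{2N\omega\bigl(1-\sqrt{1-L^4}\bigr)}$, which is at best $\sqrt{2}\,L^2\sqrt{N\omega}$ rather than $L^2\sqrt{N\omega}$ (the factor $\tfrac{2}{1+\sqrt{1-L^4}}$ exceeds $1$ for every $L>0$), and you correctly flag this. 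The paper recovers the constant $1$ by first conditioning on the number $N_1\sim\mathrm{Binom}(N,\omega)$ of training points in the ball, bounding the conditional total variation between the two product Bernoulli laws by $\sqrt{2\bigl[1-(1-L^4)^{N_1/2}\bigr]}\leq\sqrt{N_1}\,L^2$, and then applying Jensen's inequality, $\mathbb{E}[\sqrt{N_1}]\leq\sqrt{N\omega}$; since total variation is convex under mixtures this also dominates your unconditional distance. The discrepancy is only a constant factor and is immaterial for the downstream minimax rate, but if you want the lemma exactly as stated you should insert the conditioning-plus-Jensen step.
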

\begin{proof}
	Lemma \ref{lem:assouad} is similar to the Assouad lemma for classification (\cite{audibert2004classification}, Lemma 5.1), except that some details are different. In Appendix \ref{sec:assouad}, we provide a simplified proof.
\end{proof}
Therefore, according to Lemma \ref{lem:assouad},
\begin{eqnarray}
\underset{(f,\eta)\in \mathcal{S}^*}{\sup} (R-R^*)&\geq& \frac{1}{2}(1-L^2 \sqrt{N\omega})n_0 \omega L^2
\gtrsim  (1-v_d m^{\frac{1}{2}} L^{\frac{d}{2}+2}N^{\frac{1}{2}})n_0 mL^{d+2},
\label{eq:sup}
\end{eqnarray}
in which the second step comes from \eqref{eq:omega}.

We then select a proper rule to let $m$, $n_0$, $L$ to vary with $N$. From Lemma \ref{lem:restriction}, we get the following bounds:
\begin{eqnarray}
mn_0L^d&=&\mathcal{O}(L^{2\alpha }),\label{eq:lb1}\\
mn_0L^d&=&\mathcal{O}(m^\beta),\\
L&=&\mathcal{O}(1).\label{eq:lb3}
\end{eqnarray}

In addition, we need to ensure that in the right hand side of \eqref{eq:sup}, the expression in the bracket is larger than a positive constant, i.e. $1-v_dm^\frac{1}{2}L^{\frac{d}{2}+2}N^{\frac{1}{2}}>C>0$, then
\begin{eqnarray}
N mL^{4+d}=\mathcal{O}(1).
\end{eqnarray} 
Based on these constraints, we can get a lower bound of the minimax convergence rate. 

\noindent Construct 1: Let 
$L\sim N^{-\frac{\beta}{\beta d+2(\alpha+2\beta)}},$
and 
$m\sim N^{-\frac{2\alpha}{\beta d +2(\alpha+2\beta)}},$
then
\begin{eqnarray}
R-R^*\sim n_0 mL^{d+2}
\gtrsim N^{-\frac{2\beta(\alpha+1)}{\beta d+2(\alpha+2\beta)}}.
\end{eqnarray}

\noindent Construct 2: Let $L\sim 1$, $m\sim N^{-1}$, then
\begin{eqnarray}
R-R^*\gtrsim N^{-\beta}.
\end{eqnarray}
Combine these two bounds, we get
\begin{eqnarray}
\underset{(f,\eta)\in S^*}{\sup} (R-R^*) \gtrsim N^{-\min\left\{\frac{2\beta (\alpha+1)}{\beta d+2(\alpha+2\beta)},\beta \right\}}.
\end{eqnarray}
We can check that when $\beta\leq 1$, both constructions (1) and (2) satisfy the conditions from \eqref{eq:lb1} to \eqref{eq:lb3}. The proof is complete.
\subsection{Proof of Lemma \ref{lem:restriction}}\label{sec:restriction}
In this section, we prove Lemma \ref{lem:restriction}. In particular, we prove that the Assumption 1 (a)-(d) are satisfied under conditions specified in the Lemma.

\textbf{For (a)}. According to condition (1), we have
\begin{eqnarray}
\text{P}(0<|\eta(\mathbf{X})|\leq t)=\left\{
\begin{array}{ccc}
0 &\text{if}& t< L^2\\
n_0 mv_dL^d &\text{if} & t\geq L^2.
\end{array}
\right.
\end{eqnarray}
If $n_0 mv_dL^d\leq C_aL^{2\alpha }$, then
$\text{P}(0<|\eta(\mathbf{X})|<t)\leq C_at^\alpha$.

\textbf{For (b)}. According to condition (2), we have
\begin{eqnarray}
\text{P}(f(\mathbf{X})<t)=\left\{
\begin{array}{ccc}
0 & \text{if} & t\leq m;\\
n_0 mv_dL^d &\text{if} & m<t\leq m_0;\\
1 &\text{if} & t>m_0.
\end{array}
\right.
\end{eqnarray}
If $n_0 mv_dL^d\leq C_bm^\beta$ and $m_0>C_b^{-\frac{1}{\beta}}$, then
$\text{P}(f(\mathbf{X})<t)\leq C_bt^{\beta}$.

\textbf{For (c)}. As Assumption 1 (c) holds only for $\mathbf{x}$ with $f(\mathbf{x})>0$, we only need to discuss the case where $B(\mathbf{x},r)\cap B(\mathbf{a}_j,L)\neq \varnothing$ for some $j$, or $B(\mathbf{x},r)\cap B(\mathbf{a}_0,L_0)\neq \varnothing$, i.e., among all $(n_0+1)$ balls, $B(\mathbf{x},r)$ intersects with at least one ball. 

To prove Assumption 1 (c), we discuss two cases:

Case 1: $r> r_0$. According to \eqref{eq:etav}, $|\eta(\mathbf{x})|\leq L^2$. Recall that $\eta(B(\mathbf{x},r))$ is the average of $\eta(\mathbf{x})$ in $B(\mathbf{x},r)$, therefore $|\eta(B(\mathbf{x},r))|\leq L^2$. If $r>r_0$, then
$C_c r^2 >C_c r_0^2\geq 2L^2$.
Therefore $|\eta(B(\mathbf{x},r))-\eta(\mathbf{x})|\leq C_c r^2$ holds.

Case 2: $r\leq r_0$. In this case, it is obvious that $B(\mathbf{x},r)$ intersects with at most one ball among the $(n_0+1)$ balls. Therefore the density is uniform, and $|\eta(B(\mathbf{x},r))-\eta(\mathbf{x})|=0$.


\textbf{For (d)}. Now we pick $D<\min\{r_M,L_0 \}$, and show that there exists a constant $C_d$, such that for any $\mathbf{x}$ with $f(\mathbf{x})>0$, $\text{P}(B(\mathbf{x},r))\geq f(\mathbf{x}) C_d v_d r^d$ for any $r<D$. Despite that quantities such as $r_0, L, L_0, m$ and $n_0$ change with the sample size $N$, to ensure that our derivation of minimax lower bound is effective, we must give a universal constant $C_d$, independent of sample size $N$. Obviously, $\underset{\mathbf{u}\in B(\mathbf{c},r)}{\inf} V(B(\mathbf{u},r)\cap B(\mathbf{c},r))/r^d$ is a constant for all $r$ and $\mathbf{c}$. We use $v_d'$ to define this constant.

We discuss two cases: 1) $\mathbf{x}\in B(\mathbf{a}_0, L_0)$; and 2) $\mathbf{x}\in B(\mathbf{a}_j,L)$ for some $j\in\{1,\ldots,n_0 \}$.

For the first case, recall that the choice of $D$ ensures that $L_0>D$. Furthermore, the density is uniform in $B(\mathbf{a}_0,L_0)$. Then for any $r<D$,
\begin{eqnarray}
\text{P}(B(\mathbf{x},r))&=&f(\mathbf{x})V(B(\mathbf{x},r)\cap B(\mathbf{a}_0,L_0))\nonumber\\
&\overset{(a)}{\geq} &f(\mathbf{x})\frac{r^d}{L_0^d}V(B(\mathbf{x},L_0)\cap B(\mathbf{a}_0,L_0))\nonumber\\
&\geq &f(\mathbf{x})\frac{r^d}{L_0^d} \underset{\mathbf{u}\in B(\mathbf{a}_0,L_0)}{\inf} V(B(\mathbf{u},L_0)\cap B(\mathbf{a}_0,L_0))\nonumber\\
&\geq &f(\mathbf{x})r^d v_d',
\label{eq:d1}
\end{eqnarray}
in which (a) holds because $r<D<L_0$, hence
\begin{eqnarray}
\frac{r}{L_0}[B(\mathbf{x},L_0)\cap B(\mathbf{a}_0,L_0)]=B(\mathbf{x},r)\cap B(\mathbf{a}_0,r)\subset B(\mathbf{x},r)\cap B(\mathbf{a}_0,L_0).
\end{eqnarray}

For the second case, if $f(\mathbf{x})>0$ and $r<4r_0$, then according to the definition of $r_0$ in \eqref{eq:r0}, $B(\mathbf{a}_j,L)\subset B(\mathbf{x},r_0)$ for some $j$. Hence 
\begin{eqnarray}
\text{P}(B(\mathbf{x},4r_0))\geq f(\mathbf{x})v_dL^d.
\end{eqnarray}
Then for $r<4r_0$,
\begin{eqnarray}
\text{P}(B(\mathbf{x},r))&\geq& \left(\frac{r}{4r_0}\right)^d \text{P}(B(\mathbf{x},4r_0))\nonumber\\
& \geq& f(\mathbf{x})v_d \frac{L^d}{4^dr_0^d}r^d\nonumber\\
&\geq& f(\mathbf{x})v_dr^d\frac{1}{8^d\left(\max\left\{\left(\frac{1}{C_c}\right)^\frac{1}{2},L \right\}\right)^d }.
\label{eq:d2}
\end{eqnarray}
If $r\geq 4r_0$, define $n'$:
\begin{eqnarray}
n'=\sum_{j=1}^{n_0} \mathbf{1}(B(\mathbf{a}_j,L)\subset B(\mathbf{x},r)).
\end{eqnarray}
Then
\begin{eqnarray}
n'\geq \text{P}(B(\mathbf{x},r-2r_0)\cap B(0,r_M)).
\label{eq:packlb}
\end{eqnarray}  
We prove \eqref{eq:packlb} by contradiction. Suppose \eqref{eq:packlb} is not true, then we can add at least one more ball with radius $r_0$ in $B(\mathbf{x},r-r_0)$. However, according to \eqref{eq:L0}, the $n_0$ balls $B(\mathbf{a}_j,r_0)$, $j=1,\ldots,n_0$ already form a maximum packing in $B(\mathbf{x},r_M)$. Therefore \eqref{eq:packlb} holds. Hence for all $r<r_M$,
\begin{eqnarray}
\text{P}(B(\mathbf{x},r))&=& f(\mathbf{x})v_d L^d n'\nonumber\\
&\geq &f(\mathbf{x})v_d L^d \text{P}(B(\mathbf{x},r-2r_0)\cap B(0,r_M))\nonumber\\
&\overset{(a)}{\geq}& f(\mathbf{x})v_dL^d \frac{V(B(\mathbf{x},r-2r_0)\cap B(0,r_M))}{V(B(0,r_0))}\nonumber\\
&\geq & f(\mathbf{x})v_dL^d \frac{V\left(B\left(0,\frac{1}{2}r\right)\right)\cap B(-\mathbf{x},r_M))}{v_dr_0^d}\nonumber\\
&\overset{(b)}{\geq} & f(\mathbf{x}) \frac{L^d}{r_0^d} \frac{r^d}{2^dr_M^d}\underset{\mathbf{u}\in B(0,r_M)}{\inf} V(B(0,r_M)\cap B(\mathbf{u},r_M))\nonumber\\
&=&f(\mathbf{x})r^d v_d'\frac{1}{2^{2d}\left(\max\left\{\left(\frac{1}{C_c}\right)^\frac{1}{2},L \right\}\right)^d },
\label{eq:d3}
\end{eqnarray}
in which (a) uses the lower bound of packing number \cite{vershynin2018high}. For (b), recall that for case 2, $\mathbf{x}\in B(\mathbf{a}_j,L)\subset B(0,r_M)$, hence $\norm{\mathbf{x}}<r_M$. Therefore
\begin{eqnarray}
\underset{\mathbf{u}\in B(0,r_M)}{\inf}\frac{r^d}{2^dr_M^d}V(B(0,r_M)\cap B(\mathbf{u},r_M))&=&\underset{\mathbf{u}\in B(0,r/2)}{\inf}V\left(B\left(0,r/2\right)\cap B\left(\mathbf{u},r/2\right)\right)\nonumber\\
&\leq &\underset{\mathbf{u}\in B(0,r_M)}{\inf}V\left(B\left(0,r/2\right)\cap B\left(\mathbf{u},r_M\right)\right)\nonumber\\
&\leq &V\left(B\left(0,r/2\right)\cap B\left(-\mathbf{x},r_M\right)\right).
\end{eqnarray}
\eqref{eq:d1}, \eqref{eq:d2} and \eqref{eq:d3} show that there exists an universal constant $C_d$ so that Assumption 1 (d) is satisfied.

Now we have shown that Assumption 1 (a)-(d) are all satisfied, hence the proof of Lemma \ref{lem:restriction} is complete.
\subsection{Proof of Lemma \ref{lem:assouad}}\label{sec:assouad}
Our proof is similar to the proof of Lemma 5.1 in \cite{audibert2004classification}. To begin with, we give a bound of the excess risk at a specific point $\mathbf{x}\in B(\mathbf{a}_1,L)$. Define $\text{P}_\mathbf{v}(\cdot)$ as the probability under $\eta=\eta_\mathbf{v}$. Denote $N_1$ as the number of training samples that falls in $B(\mathbf{a}_1,L)$. $N_1$ follows Binomial distribution $\text{Binom}(N,\omega)$, in which $\omega$ is defined in \eqref{eq:omega}, and Binom denotes Binomial distribution. Then
\begin{eqnarray}
&&\underset{(f,\eta)\in S^*}{\sup} (\text{P}(g(\mathbf{x})\neq Y)-\text{P}(g^*(\mathbf{x})\neq Y))\nonumber\\
&\overset{(a)}{\geq}&\underset{\mathbf{v}(1)\in \{-1,1\}, \mathbf{v}(2)=\ldots=\mathbf{v}(n_0)=0}{\sup} (\text{P}_\mathbf{v}(g(\mathbf{x})\neq Y)-\text{P}_\mathbf{v}(g^*(\mathbf{x})\neq Y))\nonumber\\
&\overset{(b)}{=}&\underset{\mathbf{v}(1)\in \{-1,1\}, \mathbf{v}(2)=\ldots=\mathbf{v}(n_0)=0}{\sup} L^2 \text{P}_\mathbf{v}(g(\mathbf{x})\neq \mathbf{v}(1))\nonumber\\
&=&\underset{\mathbf{v}(1)\in \{-1,1\}, \mathbf{v}(2)=\ldots=\mathbf{v}(n_0)=0}{\sup} L^2 \mathbb{E}[\text{P}_\mathbf{v}(g(\mathbf{x})\neq \mathbf{v}(1)|N_1)]\nonumber\\
&\overset{(c)}{\geq}& \underset{\mathbf{v}(1)\in \{-1,1\}, \mathbf{v}(2)=\ldots=\mathbf{v}(n_0)=0}{\sup} L^2 \mathbb{E}\left[\frac{1-TV\left(\text{Binom}\left(N_1,\frac{1-L^2}{2}\right),\text{Binom}\left(N_1,\frac{1+L^2}{2}\right)\right)}{2}\right].
\label{eq:specificx}
\end{eqnarray}
Here, (a) holds because $\mathbf{v}(1)\in \{-1,1\}, \mathbf{v}(2)=\ldots=\mathbf{v}(n_0)=0$ is more restrictive than $S^*$ defined in \eqref{eq:Sstar}. (b) comes from \eqref{eq:exrisk}. (c) gives a lower bound of the error probability of binary hypothesis testing problem \cite{tsybakov2009introduction}, in which $TV$ denotes the total variation distance between two distributions. The total variation distance between two Binomial distributions is bounded by
\begin{eqnarray}
TV(\text{Binom}(N,p_1),\text{Binom}(N,p_2))&\leq& H(B(N,p_1),B(N,p_2))\nonumber\\
&=&\sqrt{2\left[1-\left(1-\frac{H^2(\text{Bern}(p),\text{Bern}(1-p))}{2}\right)^{N}\right]},
\label{eq:TVbound}
\end{eqnarray}
in which $H$ is the Hellinger distance and Bern denotes Bernoulli distribution. Then we use \eqref{eq:TVbound} to bound the total variation distance:
\begin{eqnarray}
TV\left(\text{Binom}\left(N_1,\frac{1-L^2}{2}\right),\text{Binom}\left(N_1,\frac{1+L^2}{2} \right)\right)\leq \sqrt{2\left[1-(\sqrt{1-L^{4}})^{N_1}\right]}\leq \sqrt{N_1}L^2.
\label{eq:TVbound2}
\end{eqnarray}
Plugging \eqref{eq:TVbound2} into \eqref{eq:specificx} and considering that $\mathbb{E}[\sqrt{N_1}]\leq \sqrt{\mathbb{E}[N_1]}=\sqrt{N\omega}$, we have
\begin{eqnarray}
\underset{(f,\eta)\in S^*}{\sup} (\text{P}(g(\mathbf{x})\neq Y)-\text{P}(g^*(\mathbf{x})\neq Y))\geq L^2 \frac{1-\sqrt{N\omega}L^2}{2}.
\end{eqnarray}
For $\mathbf{x}\in B(\mathbf{a}_j,L)$ for $j=2,\ldots,n_0$, we can obtain the same bound. Hence
\begin{eqnarray}
\underset{(f,\eta)\in S^*}{\sup} (R-R^*)&=&\underset{(f,\eta)\in S^*}{\sup} (\text{P}(g(\mathbf{X})\neq Y)-\text{P}(g^*(\mathbf{X})\neq Y))\nonumber\\
&\geq & \sum_{j=1}^{n_0} \text{P}(\mathbf{X}\in B(\mathbf{a}_j,L))L^2 \frac{1-\sqrt{N\omega}L^2}{2}\nonumber\\
&=&n_0 \omega L^2 \frac{1-\sqrt{N\omega}L^2}{2}.
\end{eqnarray}
The proof of Lemma \ref{lem:assouad} is complete.
\section{Proof of Theorem \ref{thm:adaptive}: Convergence rate of the adaptive kNN classification}\label{sec:adaptive}
In this section, we prove the convergence rate of the adaptive kNN classifier. To begin with, define
\begin{eqnarray}
h(\mathbf{x})=\frac{f^{\frac{1}{1-q}}(\mathbf{x})}{\text{P}^\frac{q}{1-q}(B(\mathbf{x},A))}.
\label{eq:hdf}
\end{eqnarray}
 In this section, without loss of generality, we will assume $D\geq A$, as the assumption $D\geq A$ does not impose further restrictions on the distribution of $\mathbf{X}$. This can be seen from the fact that, if $D<A$, 
$\text{P}(B(\mathbf{x},r))\geq C_df(\mathbf{x})V(B(\mathbf{x},D))\geq C_d\left(D/A\right)^d f(\mathbf{x})V(B(\mathbf{x},r))$
for $D\leq r\leq A$, we can use $C_d(D/A)^d$ to replace $C_d$, and use $A$ to replace $D$. 

According to Assumption \ref{ass:basic}(d), the following relation holds between $\text{P}(B(\mathbf{x},A))$, $f(\mathbf{x})$ and $h(\mathbf{x})$:
\begin{eqnarray}
\text{P}(B(\mathbf{x},A))\geq C_dv_dA^d f(\mathbf{x})\geq (C_dv_dA^d)^\frac{1}{1-q}h(\mathbf{x}).
\label{eq:relation}
\end{eqnarray}

Moreover, According to Assumption 2,
\begin{eqnarray}
\text{P}(h(\mathbf{X})<t)&=&\text{P}\left(\frac{f(\mathbf{X})}{\text{P}^q(B(\mathbf{X},A))}<t^{1-q}\right)\leq C_b't^\beta.
\label{eq:smallh}
\end{eqnarray}

\eqref{eq:relation} and \eqref{eq:smallh} will be used frequently in the proof. Now we divide the support into four regions:
\begin{eqnarray}
S_1&=&\{\mathbf{x}|h(\mathbf{x})\geq N^{-\delta}, |\eta(\mathbf{x})|>2\Delta \},\label{eq:s1def}\\
S_2&=&\{\mathbf{x}|h(\mathbf{x})\geq N^{-\delta}, |\eta(\mathbf{x})|\leq 2\Delta \},\label{eq:s2def}\\
S_3&=&\{\mathbf{x}|C_0 N^{-1}<h(\mathbf{x})<N^{-\delta} \},\label{eq:s3def}\\
S_4&=&\{\mathbf{x}|h(\mathbf{x})\leq C_0 N^{-1} \},\label{eq:s4def}
\end{eqnarray}
in which $0<\delta<1$. $\delta$ and $\Delta$ will be determined later, and
\begin{eqnarray}
C_0=2(K+1)^{\frac{1}{1-q}} (C_dv_dA^d)^{-\frac{1}{1-q}}.
\label{eq:c0}
\end{eqnarray}
Recall that
\begin{eqnarray}
R-R^*=\mathbb{E}[\mathbf{1}(g(\mathbf{X})\neq g^*(\mathbf{X}))|\eta(\mathbf{X})|].
\end{eqnarray}
Define
\begin{eqnarray}
I_i=\mathbb{E}[\mathbf{1}(g(\mathbf{X})\neq g^*(\mathbf{X}))|\eta(\mathbf{X})|\mathbf{1}(\mathbf{X}\in S_i)],
\label{eq:Idef}
\end{eqnarray}
for $i=1,2,3,4$. Then we have
\begin{eqnarray}
R-R^*=\sum_{i=1}^4 I_i.
\label{eq:R}
\end{eqnarray}
Now we bound $I_1$, $I_2$, $I_3$ and $I_4$ separately.

\noindent\textbf{Bound of $I_1$}.
Define $\rho$ as the distance from test point $\mathbf{x}$ to its $(k+1)$-th nearest neighbor. In addition, define $r_n(\mathbf{x})$ as
\begin{eqnarray}
r_n(\mathbf{x})=\inf\left\{r\left|  \frac{\text{P}(B(\mathbf{x},r))}{\text{P}(B(\mathbf{x},A))}=\frac{2k+2}{n}\right. \right\}.
\label{eq:rndef}
\end{eqnarray}
If the density is positive everywhere, then the distance $r$ that satisfies $\text{P}(B(\mathbf{x},r))/\text{P}(B(\mathbf{x},A))=(2k+2)/n$ is unique. Otherwise $r$ may not be unique, in which case we define $r_n(\mathbf{x})$ to be the infimum. For both cases, since the distribution of $\mathbf{X}$ is continuous, we have
\begin{eqnarray}
\frac{\text{P}(B(\mathbf{x},r_n(\mathbf{x})))}{\text{P}(B(\mathbf{x},A))}=\frac{2k+2}{n}.
\label{eq:probratio}
\end{eqnarray}
We have the following lemma, which gives an lower bound of $n$ and upper bound of $\rho$ that hold with high probability.
\begin{lem}\label{lem:rhoub}
	We have
	\begin{eqnarray}
	\text{P}\left(n\leq \frac{1}{2}N\text{P}(B(\mathbf{x},A))\right)\leq \exp\left[-\frac{1}{2}(1-\ln 2)N\text{P}(B(\mathbf{x},A))\right].
	\label{eq:smalln}
	\end{eqnarray}
	Furthermore, if
	$n\geq NP(B(\mathbf{x},A))/2$,
	then for all $\mathbf{x}\in S_1$,
	\begin{eqnarray}
	\text{P}(\rho> r_n(\mathbf{x})|n)\leq\exp[-(1-\ln 2)(k+1)].
	\label{eq:rbound}
	\end{eqnarray}
\end{lem}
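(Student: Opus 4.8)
The plan is to obtain both inequalities from the multiplicative Chernoff bound for binomial tails, the same device already used in \eqref{eq:largerho}. Writing $p=\text{P}(B(\mathbf{x},A))$, the count $n=\sum_{i=1}^N \mathbf{1}(\mathbf{X}_i\in B(\mathbf{x},A))$ is a sum of $N$ i.i.d.\ Bernoulli$(p)$ variables, hence $n\sim\text{Binom}(N,p)$ with mean $Np$. The first bound is then a one-line tail estimate, and the second reduces to a tail estimate after conditioning on $n$ and identifying the right binomial.

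For \eqref{eq:smalln} I would apply the lower-tail Chernoff bound $\text{P}(n\leq a)\leq e^{-Np}(eNp/a)^a$, valid for $a<Np$, at the threshold $a=Np/2$. This yields
\begin{eqnarray}
\text{P}\left(n\leq\tfrac{1}{2}Np\right) &\leq& e^{-Np}(2e)^{Np/2}=\exp\left[-\tfrac{1}{2}(1-\ln 2)Np\right],
\end{eqnarray}
since $(2e)^{Np/2}=\exp[(Np/2)(1+\ln 2)]$ and $-1+(1+\ln 2)/2=-(1-\ln 2)/2$. This is exactly \eqref{eq:smalln}, so the first part needs only this single computation.

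For \eqref{eq:rbound} I would argue conditionally on the value of $n$. Once $n$ is fixed, $k=\lfloor Kn^q\rfloor+1$ and $r_n(\mathbf{x})$ (defined in \eqref{eq:rndef}) are deterministic. The key structural fact is that, conditioned on exactly $n$ of the $N$ samples lying in $B(\mathbf{x},A)$, those $n$ samples are i.i.d.\ draws from the distribution restricted to $B(\mathbf{x},A)$. Because the hypothesis $n\geq Np/2$ forces $n$ to grow with $N$ while $k=O(n^q)=o(n)$, for large $N$ we have $2(k+1)\leq n$ uniformly over $\mathbf{x}\in S_1$, so by \eqref{eq:probratio} the ratio $\text{P}(B(\mathbf{x},r_n(\mathbf{x})))/\text{P}(B(\mathbf{x},A))=(2k+2)/n\leq 1$ and in particular $r_n(\mathbf{x})\leq A$. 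Consequently the number $m$ of these $n$ in-ball samples that fall in the sub-ball $B(\mathbf{x},r_n(\mathbf{x}))$ satisfies $m\sim\text{Binom}(n,(2k+2)/n)$ with mean $2(k+1)$. Since $r_n(\mathbf{x})\leq A$, every training sample within distance $r_n(\mathbf{x})$ of $\mathbf{x}$ is already among these $n$, so the event $\{\rho>r_n(\mathbf{x})\}$, i.e.\ the $(k+1)$-th nearest neighbor lying beyond $r_n(\mathbf{x})$, is exactly $\{m\leq k\}$.

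Finally I would bound $\text{P}(m\leq k)\leq\text{P}(m\leq k+1)$ and apply the Chernoff bound once more, now with mean $2(k+1)$ and threshold $k+1$:
\begin{eqnarray}
\text{P}(m\leq k+1) &\leq& e^{-2(k+1)}\left(\frac{e\cdot 2(k+1)}{k+1}\right)^{k+1}=e^{-2(k+1)}(2e)^{k+1}=\exp[-(1-\ln 2)(k+1)],
\end{eqnarray}
which is \eqref{eq:rbound}. The main obstacle is not any single estimate but the bookkeeping in the conditional step: one must justify the conditional i.i.d.\ representation of the in-ball samples, verify that $r_n(\mathbf{x})\leq A$ so that the global $(k+1)$-th nearest-neighbor count coincides with the binomial count $m$, and check that the choice of $k$ as a function of $n$ keeps $2(k+1)\leq n$ uniformly over $\mathbf{x}\in S_1$ for $N$ large. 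The two Chernoff applications themselves are routine.
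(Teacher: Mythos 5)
Your proposal is correct and follows essentially the same route as the paper: the first inequality is the same one-line multiplicative Chernoff bound, and the second is obtained exactly as in the paper by conditioning on $n$, noting $r_n(\mathbf{x})\leq A$ for large $N$, identifying the count in $B(\mathbf{x},r_n(\mathbf{x}))$ as conditionally $\text{Binom}(n,(2k+2)/n)$, and applying Chernoff at threshold $k+1$.
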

\begin{proof}
	According to Chernoff inequality,
	\begin{eqnarray}
	\text{P}\left(n\leq\frac{1}{2}N\text{P}(B(\mathbf{x},A))\right)\leq e^{-N\text{P}(B(\mathbf{x},A))}(2e)^{\frac{1}{2}N\text{P}(B(\mathbf{x},A))}=\exp\left[-\frac{1}{2}(1-\ln 2)N\text{P}(B(\mathbf{x},A))\right].\nonumber
	\end{eqnarray}
	Hence,	\eqref{eq:smalln} is true.
	
	Now we prove \eqref{eq:rbound}. Recall \eqref{eq:kdef}, $k$ is determined by $k=\left\lfloor Kn^q\right\rfloor+1$, thus $k/n\sim n^{q-1}$, thus for sufficiently large $N$, $k/n<1$ and hence $r_n(\mathbf{x})\leq A$. If we know that a sample is already in $B(\mathbf{x},A)$, then the conditional probability of that point falling in $B(\mathbf{x},r_n(\mathbf{x}))$ is
	\begin{eqnarray}
	\text{P}\left(\mathbf{X}\in B(\mathbf{x},r_n(\mathbf{x}))|\mathbf{X}\in B(\mathbf{x},A)\right)=
	\frac{\text{P}(B(\mathbf{x},r_n(\mathbf{x})))}{\text{P}(B(\mathbf{x},A))}.
	\label{eq:prob}
	\end{eqnarray}
	Define
	$n'=\sum_{i=1}^N \mathbf{1}(\mathbf{X}_i\in B(\mathbf{x},r_n(\mathbf{x})))$.
	According to \eqref{eq:prob}, $n'$ follows Binomial distribution conditional on $n$, i.e.
	$n'|n\sim \text{Binomial}(n,\text{P}(B(\mathbf{x},r_n(\mathbf{x})))/\text{P}(B(\mathbf{x},A)))$. Using Chernoff inequality again,
	\begin{eqnarray}
	\text{P}(\rho>r_n(\mathbf{x})|n)&=&\text{P}(n'\leq k|n)\nonumber\\
	&\leq &\exp\left[-n\frac{\text{P}(B(\mathbf{x},r_n(\mathbf{x})))}{\text{P}(B(\mathbf{x},A))}\right]\left(\frac{en\frac{\text{P}(B(\mathbf{x},r_n(\mathbf{x})))}{\text{P}(B(\mathbf{x},A))}}{k+1}\right)^{k+1}\nonumber\\
	&=&e^{-(2k+2)}(2e)^{k+1}\nonumber\\
	&=&\exp[-(1-\ln 2)(k+1)],
	\end{eqnarray}
	in which the second last step comes from \eqref{eq:probratio}. The proof of \eqref{eq:rbound} is complete.
\end{proof}
Now we bound $I_1$.
\begin{eqnarray}
I_1=\mathbb{E}[\mathbf{1}(g(\mathbf{X})\neq g^*(\mathbf{X}))|\eta(\mathbf{X})|\mathbf{1}(\mathbf{X}\in S_1)]\leq P_1+P_2+I_1',
\label{eq:I1decomp}
\end{eqnarray}
in which $P_1$, $P_2$ and $I_1'$ are defined as
\begin{eqnarray}
P_1&:=&\text{P}\left(n\leq \frac{1}{2}N\text{P}(B(\mathbf{X},A))\right),
\label{eq:p1def}\\
P_2&:=&\text{P}\left(n> \frac{1}{2}N\text{P}(B(\mathbf{X},A)), \rho>r_n(\mathbf{X})\right),
\label{eq:p2def}
\end{eqnarray}
\begin{eqnarray}
I_1':=\mathbb{E}\left[\mathbf{1}(g(\mathbf{X})\neq g^*(\mathbf{X}))|\eta(\mathbf{X})|\mathbf{1}\left(\mathbf{X}\in S_1,n>\frac{1}{2}N\text{P}(B(\mathbf{X},A)),\rho\leq r_n(\mathbf{x})\right)\right].
\label{eq:I1pdef}
\end{eqnarray}
According to \eqref{eq:smalln} and Assumption 1(d),
\begin{eqnarray}
P_1=\mathbb{E}\left[\text{P}\left(n\leq \left.\frac{1}{2}N\text{P}(B(\mathbf{X},A))\right|\mathbf{X}\right)\right]
\leq\mathbb{E}\left[\exp\left[-\frac{1}{2}(1-\ln 2)C_dv_dA^dNf(\mathbf{X})\right]\right].
\end{eqnarray}
$P_2$ can be bounded by
\begin{eqnarray}
P_2&\leq& \mathbb{E}\left[\text{P}(\rho>r_n(\mathbf{X})|n)\left|n>\frac{1}{2}N\text{P}(B(\mathbf{X},A))\right.\right]\nonumber\\
&\overset{(a)}{\leq}& \mathbb{E}\left[\exp[-(1-\ln 2)(k+1)] \left|n>\frac{1}{2}N\text{P}(B(\mathbf{X},A))\right.\right]\nonumber\\
&\overset{(b)}{\leq}& \mathbb{E}\left[\exp[-(1-\ln 2)Kn^q] \left|n>\frac{1}{2}N\text{P}(B(\mathbf{X},A))\right.\right]\nonumber\\
&\overset{(c)}{\leq}&\mathbb{E}\left[\exp\left[-(1-\ln 2)K2^{-q}(C_dv_dA^d)^q N^q f^q(\mathbf{X})\right]\right].
\end{eqnarray}
Here, (a) comes from \eqref{eq:rbound}. (b) comes from \eqref{eq:kdef}, which implies that $k>Kn^q$. (c) comes from Assumption 1 (d).

Use Lemma \ref{lem:tail} to be shown in Appendix~\ref{sec:lemmas}, we know that $P_1$ and $P_2$ can both be bounded by $\mathcal{O}(N^{-\beta})$.

Now we bound $I_1'$. For any test point $\mathbf{x}\in S_1$, if $\rho\leq r_n(\mathbf{x}) $ and $n>NP(B(\mathbf{x},A))/2$, then
\begin{eqnarray}
\text{P}(B(\mathbf{x},r_n(\mathbf{x})))=\frac{2k+2}{n}\text{P}(B(\mathbf{x},A))
\leq C_1 N^{-(1-q)}\text{P}^q(B(\mathbf{x},A)),
\end{eqnarray}
in which the second step uses $k>Kn^q$ and $n>N\text{P}(B(\mathbf{x},A))/2$.

In addition, from Assumption 1(d), $\text{P}(B(\mathbf{x},r_n(\mathbf{x})))\geq C_dv_dr_n^d(\mathbf{x})f(\mathbf{x})$, hence
\begin{eqnarray}
r_n(\mathbf{x})\leq \left[\frac{C_1}{C_dv_d}N^{-(1-q)}\frac{\text{P}^q(B(\mathbf{x},A))}{f(\mathbf{x})}\right]^\frac{1}{d}\leq \left[\frac{C_1}{C_dv_d}N^{-(1-q)}\frac{1}{h^{1-q}(\mathbf{x})}\right]^\frac{1}{d}\leq \left[\frac{C_1}{C_dv_d}N^{-(1-q)(1-\delta)}\right]^\frac{1}{d}:=a_N,\nonumber\\
\label{eq:annew}
\end{eqnarray}
in which the second step comes from the definition of $S_1$ in \eqref{eq:s1def}. 

Using Lemma \ref{lem:conc} that will be proved in Section \ref{sec:lemmas}, and Assumption 1(c), we have
\begin{eqnarray}
|\mathbb{E}[\hat{\eta}(\mathbf{x})|\rho]-\eta(\mathbf{x})|=\eta(B(\mathbf{x},\rho))-\eta(\mathbf{x})\leq C_c \rho^2
\leq  C_cr_n^2(\mathbf{x})
\leq C_ca_N^2.
\label{eq:bias}
\end{eqnarray}
Recall that in the definition \eqref{eq:s1def}, we let $|\eta(\mathbf{x})|>2\Delta$ for all $\mathbf{x}\in S_1$. Now we define $\Delta$ as
\begin{eqnarray}
\Delta=C_ca_N^2,
\label{eq:deltadef}
\end{eqnarray}
then there exists a constant $C_2$, such that for $\rho\geq r_n(\mathbf{x})$ and $n>N\text{P}(B(\mathbf{x},A))/2$, 
\begin{eqnarray}
\text{P}(g(\mathbf{x})\neq g^*(\mathbf{x})|\rho,n)&=&\text{P}(\text{sign}(\hat{\eta}(\mathbf{x}))\neq \text{sign}(\eta(\mathbf{x})|\rho)\nonumber\\
&\leq &\text{P}(|\hat{\eta}(\mathbf{x})-\eta(\mathbf{x})|>|\eta(\mathbf{x})||\rho)\nonumber\\
&\leq &\text{P}(|\hat{\eta}(\mathbf{x})-\mathbb{E}[\hat{\eta}(\mathbf{x})|\rho]|>|\eta(\mathbf{x})|-|\mathbb{E}[\hat{\eta}(\mathbf{x})|\rho]-\eta(\mathbf{x})||\rho)\nonumber\\
&\leq& \text{P}(|\hat{\eta}(\mathbf{x})-\mathbb{E}[\hat{\eta}(\mathbf{x})|\rho]|>|\eta(\mathbf{x})|-\Delta|\rho)\nonumber\\
&\overset{(a)}{\leq}&2\exp\left[-\frac{1}{2}k(|\eta(\mathbf{x})|-\Delta)^2\right]\nonumber\\
&\overset{(b)}{\leq}&2\exp\left[-\frac{1}{8}k\eta^2(\mathbf{x})\right]\nonumber\\
&\overset{(c)}{\leq}&2\exp\left[-\frac{1}{8}Kn^q\eta^2(\mathbf{x})\right]\nonumber\\
&\overset{(d)}{\leq}&2\exp\left[-C_2 N^{q(1-\delta)}\eta^2(\mathbf{x})\right].
\end{eqnarray}
(a) uses Hoeffding's inequality. For (b), note that in $S_1$, $|\eta(\mathbf{x})|>2\Delta$, hence
$\eta(\mathbf{x})-\Delta>\eta(\mathbf{x})/2$.
(c) comes from \eqref{eq:kdef}. (d) uses \eqref{eq:relation}: $n> N\text{P}(B(\mathbf{x},A))/2\gtrsim Nh(\mathbf{x})\gtrsim N^{1-\delta}$. Hence \eqref{eq:I1pdef} can be bounded using the same method as was already used in the derivation of \eqref{eq:etabound}:
\begin{eqnarray}
I_1'&\leq &2\mathbb{E}\left[|\eta(\mathbf{X})|\exp\left[-C_2 N^{q(1-\delta)}\eta^2(\mathbf{X})\right]\mathbf{1}(\mathbf{X}\in S_1)\right]= \mathcal{O}\left(N^{-\frac{\alpha+1}{2}q(1-\delta)}\right).
\label{eq:I1p}
\end{eqnarray}

Recall \eqref{eq:I1decomp} and the fact that $P_1$ and $P_2$ are both bounded by $\mathcal{O}(N^{-\beta})$,
\begin{eqnarray}
I_1=\mathcal{O}\left(N^{-\frac{\alpha+1}{2} q(1-\delta)} \right)+\mathcal{O}(N^{-\beta}).
\label{eq:I1new}
\end{eqnarray}

\noindent\textbf{Bound of $I_2$.} From \eqref{eq:Idef},
\begin{eqnarray}
I_2&=&\mathbb{E}[\mathbf{1}(g(\mathbf{X})\neq g^*(\mathbf{X}))|\eta(\mathbf{X})|\mathbf{1}(\mathbf{X}\in S_2)]\nonumber\\
&\leq &\mathbb{E}[|\eta(\mathbf{X})|\mathbf{1}(|\eta(\mathbf{X})|<2\Delta)]\nonumber\\
&\leq & 2\Delta \text{P}(|\eta(\mathbf{X})|\leq 2\Delta)\nonumber\\
&\overset{(a)}{=} &\mathcal{O}(\Delta^{\alpha+1})
\overset{(b)}{=}\mathcal{O}(a_N^{2(\alpha+1)})\overset{(c)}{=}\mathcal{O}\left(N^{-\frac{2(\alpha+1)}{d}(1-q)(1-\delta)}\right),
\label{eq:I2new}
\end{eqnarray}
in which (a) comes from Assumption 1(a), (b) comes from \eqref{eq:deltadef}, and (c) comes from \eqref{eq:annew}.

\noindent\textbf{Bound of $I_3$.}
Define
\begin{eqnarray}
\phi(\mathbf{x},n):=\mathbb{E}[|\hat{\eta}(\mathbf{x})-\eta(\mathbf{x})||n],
\end{eqnarray}
then
\begin{eqnarray}
I_3=\mathbb{E}[\mathbf{1}(g(\mathbf{X})\neq g^*(\mathbf{X}))|\eta(\mathbf{X})|\mathbf{1}(\mathbf{X}\in S_3)]
\leq \mathbb{E}[|\hat{\eta}(\mathbf{X})-\eta(\mathbf{X})|\mathbf{1}(\mathbf{X}\in S_3)]
=\mathbb{E}[\phi(\mathbf{X},n)\mathbf{1}(\mathbf{X}\in S_3)].\nonumber
\end{eqnarray}

Then we give a bound of $\phi(\mathbf{x},n)$.

\noindent Case 1): If $n\leq \frac{1}{2}N\text{P}(B(\mathbf{x},A))$, we bound it with
\begin{eqnarray}
\phi(\mathbf{x},n)\leq \mathbb{E}[|\hat{\eta}(\mathbf{x})||n]+|\eta(\mathbf{x})|\leq  2.
\label{eq:phib1}
\end{eqnarray}
\noindent Case 2):  If $n>\frac{1}{2}N\text{P}(B(\mathbf{x},A))$, then according to \eqref{eq:relation}, \eqref{eq:kdef}, \eqref{eq:c0} and \eqref{eq:s3def}, which requires that $h(\mathbf{x})>C_0/N$,  it can be shown that $k\leq n$. Recall that $\lambda$ is defined in \eqref{eq:lamdef}.
Then use Lemma \ref{lem:mseadaptive} in Appendix~\ref{sec:lemmas},
\begin{eqnarray}
\phi(\mathbf{x},n)\leq \sqrt{\mathbb{E}[(\hat{\eta}(\mathbf{x})-\eta(\mathbf{x}))^2]}
\leq  \sqrt{C_M} h^{-\lambda}(\mathbf{x})N^{-\lambda}.
\label{eq:phib2}
\end{eqnarray}
With \eqref{eq:phib1} and \eqref{eq:phib2}, $I_3$ can be bounded by:
\begin{eqnarray}
I_3
\leq  2\text{P}\left(\mathbf{X}\in S_3, n\leq \frac{1}{2}N\text{P}(B(\mathbf{X},A))\right)+\sqrt{C_M}\mathbb{E}[h^{-\lambda}(\mathbf{X})\mathbf{1}(\mathbf{X}\in S_3)]N^{-\lambda}.
\label{eq:I3decomp}
\end{eqnarray}
In the derivation about $I_1$, we have shown that the first term decays with $\mathcal{O}(N^{-\beta})$. The second term can be bounded using Lemma \ref{lem:tail}. If $\beta\neq \lambda$, the bound of $I_3$ can be expressed as
\begin{eqnarray}
I_3=\mathcal{O}\left(N^{-\lambda(1-\delta)-\delta \beta}\right)+\mathcal{O}(N^{-\beta}).
\label{eq:I3new}
\end{eqnarray}
If $\beta=\lambda$, then
$I_3=\mathcal{O}(N^{-\beta}\ln N)$.

\noindent\textbf{Bound of $I_4$}.
\begin{eqnarray}
I_4&=&\mathbb{E}[\mathbf{1}(g(\mathbf{X})\neq g^*(\mathbf{X}))|\eta(\mathbf{X})|\mathbf{1}(\mathbf{X}\in S_4)]\leq \text{P}(\mathbf{X}\in S_4)=\mathcal{O}(N^{-\beta}).
\label{eq:I4new}
\end{eqnarray}
Recall the expression of $R-R^*$ in \eqref{eq:R}, and combine \eqref{eq:I1new}, \eqref{eq:I2new}, \eqref{eq:I3new} and \eqref{eq:I4new}, if $\beta\neq\lambda$,
\begin{eqnarray}
R-R^*&=&\mathcal{O}\left(N^{-\frac{\alpha+1}{2}q(1-\delta)}\right)+\mathcal{O}\left(N^{-\frac{2}{d}(\alpha+1)(1-q)(1-\delta)}\right)+\mathcal{O}\left(N^{-\lambda(1-\delta)-\delta \beta}\right)+\mathcal{O}(N^{-\beta})\nonumber\\
&=&\mathcal{O}(N^{-(\alpha+1)\lambda(1-\delta)})+\mathcal{O}\left(N^{-\lambda(1-\delta)-\delta \beta}\right)+\mathcal{O}(N^{-\beta}).
\end{eqnarray}
The first and the second terms contain $\delta$. To optimize the overall convergence rate, let
$\delta=a\alpha/(a\alpha+\beta)$,
then
\begin{eqnarray}
R-R^*&=&\mathcal{O}\left(N^{-\frac{\lambda\beta(\alpha+1)}{\lambda\alpha+\beta}}\right)+\mathcal{O}(N^{-\beta})=\mathcal{O}\left(N^{-\min\left\{\beta,\frac{\lambda\beta(\alpha+1)}{\lambda\alpha+\beta} \right\}}\right).
\end{eqnarray}
Now consider $\beta=\lambda$. In this case,
$R-R^*=\mathcal{O}(N^{-\beta}\ln N)$.

\textbf{The optimal convergence rate}. From \eqref{eq:lamdef}, the maximal $\lambda$ is $2/(d+4)$, which is attained if $q=4/(d+4)$. Then the optimal convergence rate is

\begin{eqnarray}
R_{opt}-R^*=\left\{
\begin{array}{ccc}
\mathcal{O}\left(N^{-\min\left\{\frac{2\beta(\alpha+1)}{\beta d+2(\alpha+2\beta)},\beta \right\} }\right)& \text{if} & \beta\neq \frac{2}{d+4}\\
\mathcal{O}(N^{-\beta}\ln N),&\text{if} & \beta=\frac{2}{d+4}
\end{array}.
\right.
\end{eqnarray}

\section{Proof of Theorem \ref{thm:standardreg}: Convergence rate of the standard kNN regression with bounded $\eta$}\label{sec:std-b}
\subsection{Upper bound}
For any test point $\mathbf{x}$, define $\rho$ as the distance from $\mathbf{x}$ to its $(k+1)$-th nearest neighbor among the training dataset. Then
\begin{eqnarray}
\mathbb{E}[(g(\mathbf{x})-\eta(\mathbf{x}))^2|\rho]=(\mathbb{E}[g(\mathbf{x})|\rho]-\eta(\mathbf{x}))^2+\Var[g(\mathbf{x})|\rho]=(\eta(B(\mathbf{x},\rho))-\eta(\mathbf{x}))^2+\Var\left[\left.\frac{1}{k}\sum_{i=1}^k Y^{(i)}\right|\rho\right],\nonumber\\
\label{eq:mse}
\end{eqnarray}
in which the last step comes from \eqref{eq:etaexp} in Lemma \ref{lem:conc}.

Define the following two events:

\noindent Event 1: $f(\mathbf{X})>2k/(NC_dv_dD^d)$ and $\rho<D$;

\noindent Event 2: $f(\mathbf{X})\leq 2k/(NC_dv_dD^d)$ or $\rho\geq D$.

Define a random variable $E$, $E=1$ when event 1 occurs, and $E=2$ when event 2 occurs.

\textbf{Case 1}. If Event 1 happens, then according to Assumption 1(c),
$(\eta(B(\mathbf{x},\rho))-\eta(\mathbf{x}))^2\leq C_d^2\rho^{4}$.

For the second term in \eqref{eq:mse}, we use similar steps as \eqref{eq:variance}. In the derivation in \eqref{eq:variance}, we used $|\eta(\mathbf{x})|\leq 1$ and $\Var[Y^{(i)}|\rho,\mathbf{X}^{(1)},\ldots, \mathbf{X}^{(N)}]\leq 1$. Here these two bounds are replaced by $M$ and $C_a$, respectively. Hence
\begin{eqnarray}
\Var\left[\left.\frac{1}{k}\sum_{i=1}^k Y^{(i)}\right|\rho\right]\leq \frac{M^2+C_a}{k},
\label{eq:varg}
\end{eqnarray}

Therefore for all $\mathbf{x}$ that satisfies $f(\mathbf{x})> 2k/(NC_dv_dD^d)$,
\begin{eqnarray}
\mathbb{E}[(g(\mathbf{x})-\eta(\mathbf{x}))^2|\mathbf{x},\rho\leq D]
&\leq& C_d^2 \mathbb{E}[\rho^{4}|\rho<D,\mathbf{x}]+\frac{C_a+M^2}{k}.\nonumber\\
&\leq & C_d^2\mathbb{E}\left[\left.\frac{\text{P}^{\frac{4}{d}}(B(\mathbf{x},\rho))}{(C_dv_df(\mathbf{x}))^\frac{4}{d}}\right|\text{P}(B(\mathbf{x},\rho))\leq \text{P}(B(\mathbf{x},D))\right]+\frac{C_a+M^2}{k},\nonumber
\end{eqnarray}
in which the last step uses Assumption 1 (d). 

Using Lemma \ref{lem:order} to be shown in Section \ref{sec:lemmas}, we know that there exists a constant $C_1$ such that
\begin{eqnarray}
\mathbb{E}[(g(\mathbf{x})-\eta(\mathbf{x}))^2)\mathbf{1}(\rho<D)]\leq C_1\left(\frac{k}{N}\right)^\frac{4}{d} f^{-\frac{4}{d}}(\mathbf{x})+\frac{C_a+M^2}{k}.
\end{eqnarray}
Hence if $\beta\neq 4/d$,
\begin{eqnarray}
\mathbb{E}[(g(\mathbf{X})-\eta(\mathbf{X}))^2\mathbf{1}(E=1)]&\leq& C_1\left(\frac{k}{N}\right)^\frac{4}{d} \mathbb{E}\left[f^{-\frac{4}{d}}(\mathbf{X})\mathbf{1}\left(f(\mathbf{X})>\frac{2k}{NC_dv_dD^d}\right)\right]+\frac{C_a+M^2}{k}\nonumber\\
&=&\mathcal{O}\left(\left(\frac{k}{N}\right)^\frac{4}{d}\right)+\mathcal{O}\left(\left(\frac{k}{N}\right)^\beta\right)+\mathcal{O}\left(\frac{1}{k}\right).
\end{eqnarray}
Here, in the last step, we Lemma \ref{lem:tail} shown in Section \ref{sec:lemmas}. If $\beta=4/d$,
\begin{eqnarray}
\mathbb{E}[(g(\mathbf{X})-\eta(\mathbf{X}))^2\mathbf{1}(E=1)]=\mathcal{O}\left(\left(\frac{k}{N}\right)^\beta \ln \frac{N}{k}\right)+\mathcal{O}\left(\frac{1}{k}\right).
\label{eq:case1}
\end{eqnarray}
\textbf{Case 2}. If Event 2 happens, then according to Assumption 4, $|\eta(\mathbf{x})|\leq M$ for any $\mathbf{x}$. Hence the first term in \eqref{eq:mse} is bounded by $(\eta(B(\mathbf{x},\rho))-\eta(\mathbf{x}))^2\leq 4M^2$. For the second term in \eqref{eq:mse}, note that for any $i\in \{1,\ldots,k \}$,
\begin{eqnarray}
\Var[Y^{(i)}|\rho]=\mathbb{E}[\Var[Y^{(i)}|\mathbf{X}^{(i)},\rho]]+\Var[\mathbb{E}[Y^{(i)}|\rho,\mathbf{X}^{(i)}]]
\leq C_a+\Var[\eta(\mathbf{X}^{(i)})|\rho]
\leq C_a+M^2.
\end{eqnarray}
Using Cauchy inequality,
$\Var\left[\left.\frac{1}{k}\sum_{i=1}^k Y^{(i)}\right|\rho\right]\leq C_a+M^2$,
and thus
\begin{eqnarray}
\mathbb{E}[(g(\mathbf{x})-\eta(\mathbf{x}))^2|\rho]\leq C_a+5M^2.
\end{eqnarray}
Now we can give an overall bound of the loss function under case 2:
\begin{eqnarray}
&&\mathbb{E}[(g(\mathbf{X})-\eta(\mathbf{X}))^2 \mathbf{1}(E=2)]\nonumber\\
&\leq &(C_a+5M^2)\left(\text{P}\left(f(\mathbf{X})\leq \frac{2k}{NC_dv_dD^d}\right)+\text{P}\left(f(\mathbf{X})> \frac{2k}{NC_dv_dD^d} ,\rho\geq D\right)\right).
\label{eq:case2}
\end{eqnarray}
From Assumption 1 (b), the first term in the bracket in \eqref{eq:case2} decays with $\mathcal{O}((k/N)^\beta)$. Moreover, if $f(\mathbf{x})>2k/(NC_dv_dD^d)$, then $\text{P}(B(\mathbf{x},D))> 2k/N$, and \eqref{eq:largerho} still holds here. Therefore, the second term in the bracket in \eqref{eq:case2} decays faster than any polynomial. With this observation, and combine with \eqref{eq:case1}, we have
\begin{eqnarray}
R-R^*=\mathbb{E}[(g(\mathbf{X})-\eta(\mathbf{X}))^2]=\left\{
\begin{array}{ccc}
\mathcal{O}\left(\left(\frac{k}{N}\right)^{\min\{\beta,\frac{4}{d}\}} \right)+\mathcal{O}\left(\frac{1}{k}\right) &\text{if} & \beta\neq \frac{4}{d},\\
\mathcal{O}\left(\left(\frac{k}{N}\right)^\beta \ln \frac{N}{k}\right)+\mathcal{O}\left(\frac{1}{k}\right) &\text{if} & \beta=\frac{4}{d}.
\end{array}
\right.
\end{eqnarray}
The fastest rate is attained if 
\begin{eqnarray}
k\sim \left\{
\begin{array}{ccc}
N^\frac{4}{d+4} & \text{if} & \beta\geq \frac{4}{d},\\
N^{\frac{\beta}{\beta+1} }& \text{if} & \beta<\frac{4}{d}.
\end{array}
\right.
\end{eqnarray}
The corresponding optimal convergence rate is
\begin{eqnarray}
R-R^*=\left\{
\begin{array}{ccc}
\mathcal{O}\left(N^{-\min\left\{\frac{4}{d+4},\frac{\beta}{\beta+1}\right\}}\right) &\text{if} & \beta\neq \frac{4}{d},\\
\mathcal{O}\left(N^{-\frac{\beta}{\beta+1}}\ln N\right) &\text{if} & \beta=\frac{4}{d}.
\end{array}
\right.
\end{eqnarray}
\color{black}
\subsection{Lower bound}
We prove the following statements separately:
\begin{eqnarray}
\underset{(f,\eta)\in \mathcal{S}}{\sup} (R-R^*)&\gtrsim& \frac{1}{k};
\label{eq:reglb1}\\
\underset{(f,\eta) \in \mathcal{S}}{\sup} (R-R^*)&\gtrsim& \left(\frac{k}{N}\right)^\frac{4}{d};
\label{eq:reglb2}\\
\underset{(f,\eta)\in \mathcal{S}}{\sup} (R-R^*)&\gtrsim& \left(\frac{k}{N}\right)^\beta.
\label{eq:reglb3}
\end{eqnarray}

\textbf{Proof of \eqref{eq:reglb1}}.
Given arbitrary distribution with pdf $f(\mathbf{X})$, let $Y\sim \mathcal{N}(0,\sigma^2)$, in which $\sigma^2 \leq C_a$, $C_a$ is the constant in Assumption \ref{ass:basic}. Then $\eta(\mathbf{x})=0$ everywhere, and
\begin{eqnarray}
R-R^*=\mathbb{E}[(\hat{\eta}(\mathbf{X})-\eta(\mathbf{X}))^2]=\Var[\eta(\mathbf{X})]=\Var\left[\frac{1}{k}\sum_{i=1}^k Y^{(i)}\right]=\frac{\sigma^2}{k}.
\end{eqnarray}
Hence, \eqref{eq:reglb1} holds.

\textbf{Proof of \eqref{eq:reglb2}}.

For simplicity, in the following proof, we assume that we are using max norm in kNN regression.

Construct the following distribution. Let $\mathbf{X}=(X_1,\ldots,X_d)\sim \text{Uniform}([-1,1]^d)$, and 
\begin{eqnarray}
\eta(\mathbf{x})=\eta_1(x_1)=\left\{
\begin{array}{ccc}
x_1^2+2x_1 &\text{if} & x_1<0\\
-x_1^2+2x_1 &\text{if} & x_1\geq 0
\end{array}
\right.
\end{eqnarray}
Note that $\norm{\nabla^2 \eta}=2$ for $x_1\neq 0$. Then define
\begin{eqnarray}
I_\Delta =\{\mathbf{x}|-1+\Delta<x_1<-\Delta \text{ or } \Delta<x_1<1-\Delta \}.
\end{eqnarray}
For this distribution, 
\begin{eqnarray}
R-R^*&=&\mathbb{E}[(\hat{\eta}(\mathbf{X})-\eta(\mathbf{X}))^2]\nonumber\\
&\geq & \mathbb{E}\left[(\mathbb{E}[\hat{\eta}(\mathbf{X})|\mathbf{X}]-\eta(\mathbf{X}))^2 \mathbf{1}(\mathbf{X}\in I_\Delta)\right].
\end{eqnarray}
For any $\mathbf{x}$, we have $\mathbb{E}[\hat{\eta}(\mathbf{X})]=\mathbb{E}[\eta(B(\mathbf{x},\rho))]$, since $\norm{\nabla^2 \eta(\mathbf{x})}=2$ almost everywhere, 
\begin{eqnarray}
|\mathbb{E}[\hat{\eta}(\mathbf{x})-\eta(\mathbf{x})]|&=&\left|\mathbb{E}[\eta(B(\mathbf{x},\rho))]-\eta(\mathbf{x})\right|\nonumber\\
&=&\left|\mathbb{E}\left[\frac{\int_{x_1-\rho}^{x_1+\rho}( \eta_1(x_1')-\eta_1(x_1))dx_1'}{2\rho}\right]\right|\nonumber\\
&=&\left|\mathbb{E}\left[\frac{1}{2\rho}\int_{-\rho}^\rho \frac{1}{2} \eta^{''}(x_1) t^2 dt \right]\mathbf{1}(\rho\leq \Delta)\right|\nonumber\\
&\geq &\mathbb{E}\left[\frac{1}{3}\rho^2 \mathbf{1}(\rho\leq \Delta)\right]-2\text{P}(\rho>\Delta).
\end{eqnarray} 
Note that with max norm, for uniform distribution, $\text{P}(B(\mathbf{x},\rho))=2^df(\mathbf{x})\rho^d$ if $B(\mathbf{x},\rho)$ does not exceed $[-1,1]^d$. Here $f(\mathbf{x})=1/2^d$, hence $\text{P}(B(\mathbf{x},\rho))=\rho^d$ if $B(\mathbf{x},\rho)\subset [-1,1]^d$. Hence
\begin{eqnarray}
\mathbb{E}[\rho^2 \mathbf{1}(\rho\leq \Delta)]&=&\mathbb{E}[\text{P}^\frac{2}{d}(B(\mathbf{x},\rho))\mathbf{1}(\rho\leq \Delta)]\nonumber\\
&=& \mathbb{E}[\text{P}^\frac{2}{d}(B(\mathbf{x},\rho))]-\mathbb{E}[\text{P}^\frac{2}{d}(B(\mathbf{x},\rho))\mathbf{1}(\rho>\Delta)]\nonumber\\
&\geq & \frac{\Gamma\left(k+1+\frac{2}{d}\right)}{\Gamma(k+1)} \frac{\Gamma(N+1)}{\Gamma\left(N+1+\frac{2}{d}\right)}-\text{P}(\rho>\Delta),
\end{eqnarray}
in which the last step uses the fact that $\text{P}(B(\mathbf{x},\rho))$ follows $\text{Beta}(k+1,N-k)$ distribution. Therefore
\begin{eqnarray}
|\mathbb{E}[\hat{\eta}(\mathbf{x})]-\eta(\mathbf{x})|\geq \frac{1}{3} \frac{\Gamma\left(k+1+\frac{2}{d}\right)}{\Gamma(k+1)} \frac{\Gamma(N+1)}{\Gamma\left(N+1+\frac{2}{d}\right)}-\frac{7}{3}\text{P}(\rho>\Delta).
\end{eqnarray}
Since $\text{P}(\rho>\Delta)$ decays exponentially, $|\mathbb{E}[\hat{\eta}(\mathbf{x})]-\eta(\mathbf{x})|\sim (k/N)^{2/d}$, therefore
\begin{eqnarray}
R-R^* \sim \left(\frac{k}{N}\right)^\frac{4}{d} \text{P}(\mathbf{X}\in I_\Delta)\sim \left(\frac{k}{N}\right)^\frac{4}{d}.
\end{eqnarray}
Hence \eqref{eq:reglb2} holds.

\textbf{Proof of \eqref{eq:reglb3}}.
Construct $(n+1)$ cubes $I_1,\ldots, I_{n+1}$. $\mathbf{X}$ is supported by these cubes, and is uniformly distributed within each cube. Let $m$ be the pdf value in the first $n$ cubes. For the remaining cube, the density is $(1-2^d nm)/2^d$. This ensures that the total probability mass of all $(n+1)$ cubes is $1$. $m$ and $n$ change with $k$ and $N$. The precise definition of each cube $I_j$ is
\begin{eqnarray}
I_j=\left\{ \mathbf{x}|4j-1<x<4j+1 ,x_2,\ldots, x_d\in [-1,1] \right\}
\end{eqnarray}
for $j=1,\ldots, n+1$. Similar to the proof of \eqref{eq:reglb2}, define
\begin{eqnarray}
I_{j\Delta}=\left\{\mathbf{x}| 4j-1+\Delta<x_1<4j-\Delta \text{ or } 4j+\Delta<x_1<4j+1-\Delta, x_2,\ldots, x_d\in [-1,1] \right\}.
\end{eqnarray}
In $I_{(n+1)\Delta}$, let $\eta(\mathbf{x})=0$. Otherwise, let
\begin{eqnarray}
\eta(\mathbf{x})=\eta_1(x_1)=\left\{
\begin{array}{ccc}
(x_1-4j)^2 +2(x_1-4j) &\text{if} & 4j-1\leq x_1<4j\\
-(x_1-4j)^2+2(x_1-4j) &\text{if} & 4j\leq x\leq 4j+1.
\end{array}
\right.
\end{eqnarray}
Then
\begin{eqnarray}
R-R^*&=& \mathbb{E}\left[ (\hat{\eta}(\mathbf{X})-\eta(\mathbf{X}))^2\right]\nonumber\\
&\geq&\sum_{j=1}^n \mathbb{E}[(\mathbb{E}[\hat{\eta}(\mathbf{X})|\mathbf{X}]-\eta(\mathbf{X}))^2\mathbf{1}(\mathbf{X}\in I_{j\Delta})]\nonumber\\
&=& n\mathbb{E}[(\mathbb{E}[\hat{\eta}(\mathbf{X})|\mathbf{X}]-\eta(\mathbf{X}))^2\mathbf{1}(\mathbf{X}\in I_{1\Delta})].
\end{eqnarray}
Ensure that in $I_{1\Delta}$, $\text{P}(B(\mathbf{x},\Delta))= 2(k+1)/N$, i.e. $m2^d\Delta^d= 2(k+1)/N$, then for $\mathbf{x}\in I_{1\Delta}$, 
\begin{eqnarray}
\text{P}(\rho>\Delta)=\text{P}\left(\text{P}(B(\mathbf{x},\rho))>\frac{2(k+1)}{N}\right)\leq e^{-(1-\ln 2)(k+1)},
\end{eqnarray}
which decays exponentially.

The remaining steps are similar to the proof of \eqref{eq:reglb2}. Since $\text{P}(B(\mathbf{x},\rho))=m2^d\rho^d$, for $\mathbf{x}\in I_{1\Delta}$,
\begin{eqnarray}
|\mathbb{E}[\hat{\eta}(\mathbf{x})]-\eta(\mathbf{x})|&=&\frac{1}{3}\mathbb{E}[\rho^2 \mathbf{1}(\rho\leq \Delta)]-2\text{P}(\rho>\Delta)\nonumber\\
&=&\frac{1}{12m^\frac{2}{d}}\mathbb{E}\left[\text{P}^\frac{2}{d}(B(\mathbf{x},\rho))\mathbf{1}\left(\text{P}(B(\mathbf{x},\rho))\leq \frac{2(k+1)}{N}\right)\right]-2\text{P}(\rho>\Delta).
\end{eqnarray}
Note that $\text{P}(\rho>\Delta)$ decays exponentially, $\text{P}(B(\mathbf{x},\rho))\sim \text{Beta}(k+1,N-k-1)$, therefore there exists a constant $c$, such that $|\mathbb{E}[\hat{\eta}(\mathbf{x})]-\eta(\mathbf{x})|\geq c$ for $\mathbf{x}\in I_{1\Delta}$. Hence
\begin{eqnarray}
R-R^*\geq nc^2\text{P}(\mathbf{X}\in I_{1\Delta}).
\end{eqnarray}
Consider that the distribution should satisfy Assumption \ref{ass:basic}(b), 
\begin{eqnarray}
\text{P}(f(\mathbf{X})\leq m)=n\text{P}(\mathbf{X}\in I_1)\leq C_bm^\beta=C_b\left(\frac{2(k+1)}{2^d\Delta^dN}\right)^\beta.
\end{eqnarray}
Therefore, by using an appropriate $n$, let $n\text{P}(\mathbf{X}\in I_{1\Delta})\sim n\text{P}(\mathbf{X}\in I_1)\sim (k/N)^\beta$, then 
\begin{eqnarray}
R-R^*\sim \left(\frac{k}{N}\right)^\beta.
\end{eqnarray}
Hence \eqref{eq:reglb3} holds. The proof is complete.
\color{black}
\section{Proof of Theorem \ref{thm:minimaxreg}: Minimax convergence rate of regression with bounded $\eta$}
The proof of the minimax convergence rate for the regression is similar to the proof for the classification. Define $f(\mathbf{x})$, $r_0$, $L_0$, $\eta_\mathbf{v}(\mathbf{x})$, $r_M$, and $\mathcal{S}^*$ in the same way as \eqref{eq:pdf}, \eqref{eq:r0}, \eqref{eq:L0}, \eqref{eq:etav} and \eqref{eq:Sstar}. Then $(f,\eta_\mathbf{v})$ satisfies Assumptions 3 and 4 if $n_0 mv_dL^d\leq C_bm^\beta$, and $L\leq M$. Let the noise $\epsilon$ be normally distributed with variance $C_a$, in which $C_a$ is the constant in Assumption 3 (a), i.e., $Y=\eta(\mathbf{X})+\epsilon$, $\epsilon\sim \mathcal{N}(0,C_a)$.

Now we follow the proof of Lemma \ref{lem:assouad} shown in Appendix \ref{sec:assouad}.

For $\mathbf{x}\in B(\mathbf{a}_1,L)$, define $N_1$ as the number of training samples falling in $B(\mathbf{a}_1,L)$, then
\begin{eqnarray}
\underset{(f,\eta\in \mathcal{S}^*)}{\sup} \mathbb{E}[(g(\mathbf{x})-\eta(\mathbf{x}))^2]&\geq& \underset{\mathbf{v}(1)\in \{-1,1\}, \mathbf{v}(2)=\ldots=\mathbf{v}(n_0)=0}{\sup} \mathbb{E}[(g(\mathbf{x})-\eta(\mathbf{x})))^2]\nonumber\\
&\overset{(a)}{\geq}& L^4 \text{P}(\hat{\mathbf{v}}(1)\neq \mathbf{v}(1))\nonumber\\
&=&L^4 \mathbb{E}[\text{P}(\hat{\mathbf{v}}(1)\neq \mathbf{v}(1)|N_1)]\nonumber\\
&\overset{(b)}{\geq}&\frac{1}{2}L^4 \mathbb{E}[1-TV(P_+,P_-)]\nonumber\\
&\overset{(c)}{\geq}&\frac{1}{2}L^4 \mathbb{E}\left[1-\sqrt{\frac{1}{2}D(P_+||P_-)}\right]\nonumber\\
&=&\frac{1}{2}L^4 \mathbb{E}\left[1-\sqrt{\frac{N_1}{2}D(\mathcal{N}(L^2,C_a)||\mathcal{N}(-L^2,C_a))}\right]\nonumber\\
&=&\frac{1}{2}L^4 \left(1-\frac{L^2}{\sqrt{C_a}}\mathbb{E}[\sqrt{N_1}]\right)\nonumber\\
&\overset{(d)}{\geq} & \frac{1}{2}L^4 \left(1-\frac{L^2}{\sqrt{C_a}}\sqrt{N\omega}\right).
\end{eqnarray}
In (a), we define $\hat{\mathbf{v}}(1)=1$ if $g(\mathbf{x})>0$, and $-1$ otherwise. If $\hat{\mathbf{v}}(1)\neq \mathbf{v}(1)$, then $g(\mathbf{x})$ and $\eta(\mathbf{x})$ have different signs. According to the construction of $\eta_\mathbf{v}$ in \eqref{eq:etav}, $|g(\mathbf{x})-\eta(\mathbf{x})|>L^2$. Hence (a) holds. In (b), $TV$ denotes the total variation distance, and $P_+$ denotes the joint distribution of $N_1$ independent random variables, which is normal with mean $L^2$ and variance $C_a$, while $P_-$ is defined in the same way as $P_+$ except that the mean of the normal distribution becomes $-L^2$. (c) uses Pinsker's inequality \cite{tsybakov2009introduction}. In (d), $\omega$ is the probability mass of $B(\mathbf{a}_j,L)$ for $j=1,\ldots,n_0$, $\omega=mv_dL^d$, in which $v_d$ is the volume of unit ball.

For $\mathbf{x}\in B(\mathbf{a}_j)$ for $j=2,\ldots,n_0$, we can also get the same bound. Therefore
\begin{eqnarray}
\underset{(f,\eta)\in \mathcal{S}^*}{\sup} (R-R^*)\geq \frac{1}{2}n_0 \omega L^4 \left(1-\frac{L^2}{C_a}\sqrt{N\omega}\right).
\end{eqnarray}
Assumption 3 includes a tail assumption, i.e., Assumption 1 (b), under which we have $n_0 \omega\leq C_bm^\beta$. The proof of this statement can be found in the proof of Lemma \ref{lem:restriction} (2) in Appendix \ref{sec:restriction}. Moreover, from Assumption 4, $L\leq M$. To ensure that the expression in the above bracket is positive, i.e., $1-L^2\sqrt{N\omega/C_a}>0$, we need to ensure that $N\omega L^4\leq C_a$. Consider that $\omega\sim mL^d$, these above arguments show that (1) $n_0 mL^d=\mathcal{O}(m^\beta)$; (2) $L=\mathcal{O}(1)$; (3) $n_0 m L^{d+4}=\mathcal{O}(1)$. We then get the following lower bounds on the excess risk:

\noindent(1) Pick $L\sim N^{-\frac{1}{d+4}} $, $m\sim 1$, and $ n_0\sim N^{\frac{d}{d+4}}$, then
\begin{eqnarray}
\underset{(f,\eta)\in \mathcal{S}^*}{\sup} (R-R^*)\gtrsim N^{-\frac{4}{d+4}}.
\label{eq:mmxlb1}
\end{eqnarray}
\noindent(2) Pick $m\sim N^{-1}, L\sim 1, n_0\sim N^{1-\min\{\beta,1\}}$, then 
\begin{eqnarray}
\underset{(f,\eta)\in \mathcal{S}^*}{\sup} (R-R^*)\gtrsim N^{-\min\{\beta,1\}}.
\label{eq:mmxlb2}
\end{eqnarray}
Combine \eqref{eq:mmxlb1} and \eqref{eq:mmxlb2}, we get
\begin{eqnarray}
\underset{(f,\eta)\in \mathcal{S}^*}{\sup} (R-R^*)\gtrsim N^{-\min\left\{\frac{4}{d+4},\beta \right\}}.
\end{eqnarray}
The proof is complete.
\section{Proof of Theorem \ref{thm:adaptivereg}: Convergence rate of the adaptive kNN regression with bounded $\eta$}\label{sec:ada-b}
Without loss of generality, we assume $D\geq A$, since we have shown that this assumption does not impose further restrictions on the distribution of $\mathbf{X}$ in Section \ref{sec:adaptive}.

Define $h(\mathbf{x})$ in the same way as \eqref{eq:hdf}. Recall that $k$ is selected adaptively according to \eqref{eq:kdef}, since $0<q<1$, for any constant $K$, there exists a critical value $n_c$, so that when $n\geq n_c$, $k\leq n$, which means that the $k$-th nearest neighbor must fall in $B(\mathbf{x},A)$. We then discuss the following two cases:

Case 1: $h(\mathbf{x})>N^{-1}$, $n>N\text{P}(B(\mathbf{x},A))/2$, and $n>n_c$;

Case 2: $h(\mathbf{x})\leq N^{-1}$ or $n\leq N\text{P}(B(\mathbf{x},A))/2$ or $n\leq n_c$.

Now we discuss these two cases separately. Similar to the proof of the standard kNN regression, we still define a binary random variable $E$, in which $E=1$ if case 1 happens and $E=2$ if case 2 happens.

\noindent\textbf{Case 1}. 
For kNN regression, $g(\mathbf{x})=\hat{\eta}(\mathbf{x})$. Therefore $\mathbb{E}[(g(\mathbf{x})-\eta(\mathbf{x}))^2|n]$ can be bounded by Lemma \ref{lem:mseadaptive} of Appendix~\ref{sec:lemmas} when $n>n_c$. From \eqref{eq:smallh}, for any $t>0$, $\text{P}(h(\mathbf{X})<t)\leq C_b't^\beta$. Use Lemma \ref{lem:tail},
\begin{eqnarray}
\mathbb{E}[(g(\mathbf{X})-\eta(\mathbf{X}))^2\mathbf{1}(E=1)]&\leq& C_2 N^{-2\lambda}\mathbb{E}[h^{-2\lambda}(\mathbf{X})\mathbf{1}(h(\mathbf{X})>N^{-1})]\nonumber\\
&=&\left\{
\begin{array}{ccc}
\mathcal{O}(N^{-\beta})+\mathcal{O}(N^{-2\lambda}) &\text{if} & \beta\neq 2\lambda\\
\mathcal{O}(N^{-\beta}\ln N) & \text{if} & \beta=2\lambda.
\end{array}
\right.
\end{eqnarray}
\textbf{Case 2}. Similar to \eqref{eq:case2}, we have
$\mathbb{E}[(g(\mathbf{x})-\eta(\mathbf{x}))^2|n]\leq C_a+2M^2$,
hence
\begin{eqnarray}
\mathbb{E}[(g(\mathbf{X})-\eta(\mathbf{X}))^2\mathbf{1}(E=2)]
\leq  (C_a+2M^2)\left[ \text{P}(h(\mathbf{X})\leq N^{-1})+\text{P}\left(n\leq \frac{1}{2}\text{P}(B(\mathbf{X},A))\right)+\text{P}(n\leq n_c)\right].\nonumber
\end{eqnarray}
Now we bound these three probabilities. According to \eqref{eq:smallh}, the first term can be bounded by $\mathcal{O}(N^{-\beta})$. The second term was defined as $P_1$ in \eqref{eq:p1def}, and it has been proved in Appendix \ref{sec:adaptive} that $P_1=\mathcal{O}(N^{-\beta})$. It remains to bound $\text{P}(n\leq n_c)$. $n\leq n_c$ happens only if at least one of the following two events happen: (1) $n\leq \text{P}(B(\mathbf{X},A))/2$; (2)$\text{P}(B(\mathbf{X},A))/2\leq n_c$. The probability of these two events are both bounded by $\mathcal{O}(N^{-\beta})$, therefore $\text{P}(n\leq n_c)$ is bounded by $\mathcal{O}(N^{-\beta})$. As a result, $\mathbb{E}[(g(\mathbf{X})-\eta(\mathbf{X}))^2\mathbf{1}(E=2)]=\mathcal{O}(N^{-\beta})$.

Combine Case 1 and Case 2, we have
\begin{eqnarray}
\mathbb{E}[(g(\mathbf{X})-\eta(\mathbf{X}))^2]=\left\{
\begin{array}{ccc}
\mathcal{O}(N^{-\beta})+\mathcal{O}(N^{-2\lambda}) &\text{if} & \beta\neq 2\lambda\\
\mathcal{O}(N^{-\beta}\ln N) &\text{if} & \beta=2\lambda.
\end{array}
\right.
\label{eq:mse-b}
\end{eqnarray}
Now we calculate the optimal convergence rate. Recall that $2\lambda$ defined in \eqref{eq:lamdef},
\begin{eqnarray}
2\underset{q}{\max}\lambda =\underset{q}{\max}\left[\min\left\{q,\frac{4}{d}(1-q) \right\}\right] =\frac{4}{d+4},
\end{eqnarray}
with the maximum attained at
$q^*=4/(d+4)$.
Then the optimal convergence rate is:
\begin{eqnarray}
\mathbb{E}[(g(\mathbf{X})-\eta(\mathbf{X}))^2]=\left\{
\begin{array}{ccc}
\mathcal{O}(N^{-\min\left\{\beta,\frac{4}{d+4} \right\}}) &\text{if} & \beta\neq \frac{4}{d+4}\\
\mathcal{O}(N^{-\beta}\ln N) &\text{if} & \beta=\frac{4}{d+4}.
\end{array}
\right.
\end{eqnarray}
\section{Proof of Theorem \ref{thm:nounif}: No regression method is uniformly consistent without the new tail assumption}
In this section, we prove that no regressor can be uniformly consistent with Assumption 1 and 5 (e) but not 5 (b'). This indicates that Assumption 5 (b') is necessary.

Given the constants $C_a,\ldots,C_d,M,L$, let $\mathcal{S}$ be the set of pairs $(f,\eta)$ that satisfy the assumptions. For simplicity, we let $\beta=1$ and $L=1$. Other cases can be proved similarly. We first discuss one dimensional problems, and then generalize to arbitrary fixed dimension.

Define
\begin{eqnarray}
S^*=\{(f,\eta_v)|v\in \{-1,1\} \},
\end{eqnarray}
with
\begin{eqnarray}
f(x)=\left\{
\begin{array}{ccc}
1-\frac{1}{m} &\text{if} & -1<x<0\\
\frac{1}{m} &\text{if} & m<x<m+1
\end{array}
\right.
\label{eq:expf}
\end{eqnarray}
and
\begin{eqnarray}
\eta_1(x)=\left\{
\begin{array}{ccc}
0 & \text{if} & -1<x<0\\
m &\text{if} & m<x<m+1,
\end{array}
\right.
\eta_{-1}(x)=\left\{
\begin{array}{ccc}
0 & \text{if} & -1<x<0\\
-m &\text{if} & m<x<m+1.
\end{array}
\right.
\end{eqnarray}
In addition, define a variable 
\begin{eqnarray}
\hat{v}=\text{sign}\left(\int_m^{m+1} g(x)dx\right).
\end{eqnarray}
Recall that
\begin{eqnarray}
R-R^*=\mathbb{E}[(g(X)-\eta(X))^2]
=\mathbb{E}\left[\int (g(x)-\eta(x))^2 f(x)dx\right].
\end{eqnarray}
To give a lower bound of $R$, we have the following lemma.
\begin{lem}\label{lem:difsign}
	If $\hat{v}$ and $v$ have different sign, then $\int (g(x)-\eta(x))^2 f(x) dx\geq m$.
\end{lem}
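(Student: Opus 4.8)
The plan is to restrict the integral to the tail interval $(m,m+1)$, where the two candidate regression functions $\eta_1$ and $\eta_{-1}$ differ by $2m$, and to exploit the fact that a sign disagreement forces a large average deviation there. Since the integrand $(g(x)-\eta(x))^2 f(x)$ is nonnegative, discarding the contribution from $(-1,0)$ only decreases the integral, so it suffices to bound $\int_m^{m+1}(g(x)-\eta(x))^2 f(x)\,dx$ from below. On this interval $f(x)=1/m$, so the target reduces to showing $\int_m^{m+1}(g(x)-\eta(x))^2\,dx \ge m^2$.

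First I would treat the case $v=1$, so that $\eta(x)=m$ on $(m,m+1)$. The hypothesis $\hat{v}\neq v$ means $\hat{v}=-1$, i.e. $\int_m^{m+1} g(x)\,dx \le 0$. Applying the Cauchy--Schwarz inequality (equivalently Jensen's inequality, using that the interval has unit length) gives
\begin{eqnarray}
\int_m^{m+1}(g(x)-m)^2\,dx \ge \left(\int_m^{m+1}(g(x)-m)\,dx\right)^2 = \left(\int_m^{m+1} g(x)\,dx - m\right)^2.
\end{eqnarray}
Because $\int_m^{m+1} g(x)\,dx \le 0$, the quantity inside the square is at most $-m$, hence its square is at least $m^2$. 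Multiplying by $f(x)=1/m$ then yields the claimed lower bound $m$.

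The case $v=-1$ is symmetric: then $\eta(x)=-m$ on $(m,m+1)$ and $\hat{v}=1$ forces $\int_m^{m+1} g(x)\,dx \ge 0$, so $\int_m^{m+1}(g(x)+m)\,dx \ge m$, and the same Cauchy--Schwarz step gives $\int_m^{m+1}(g(x)+m)^2\,dx \ge m^2$. I do not expect any genuine obstacle here; the only point requiring a little care is making the reduction to the unit-length interval rigorous and keeping track of the factor $f(x)=1/m$, which precisely cancels the $m^2$ to leave $m$. The argument is deliberately crude --- it controls only the mean of $g$ over the tail rather than its pointwise behavior --- which is exactly what makes it robust enough to be combined across the family $\mathcal{S}^*$ in the subsequent step to rule out uniform consistency.
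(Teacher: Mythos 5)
Your proof is correct and follows essentially the same route as the paper: both arguments restrict to the tail interval where $f=1/m$ and $\eta=vm$, expand the square, and use the sign disagreement to conclude $-v\int_m^{m+1} g(x)\,dx\geq 0$ so that the nonnegative terms can be dropped. The only difference is your intermediate Cauchy--Schwarz/Jensen step, which is harmless but not needed --- the paper drops $\int g^2(x)\frac{1}{m}dx\geq 0$ directly instead of first passing to the squared mean.
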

\begin{proof}
	\begin{eqnarray}
	\int (g(x)-\eta(x))^2 f(x) dx&\geq & \int_m^{m+1} (g(x)-\eta(x))^2 \frac{1}{m}dx\nonumber\\
	&=&\int_m^{m+1} (g^2(x)-2g(x)\eta(x) +\eta^2(x))\frac{1}{m} dx\nonumber\\
	&=&m+\int_m^{m+1} g^2(x) \frac{1}{m} dx-2v\int_m^{m+1} g(x) dx.
	\end{eqnarray}
	Note that 
	$\int_m^{m+1} g^2(x)\frac{1}{m}dx\geq 0$,
	and 
	$-v\int g(x)dx\geq 0$,
	because $v$ and $\hat{v}$ have different sign. These facts imply that
	$\int (g(x)-\eta(x))^2 f(x) dx\geq m$.
\end{proof}
With Lemma \ref{lem:difsign}, we let $V$ follow distribution $\text{P}(V=1)=\text{P}(V=-1)=1/2$, and define $n$ as the number of training samples that fall in $[m,m+1]$, then
\begin{eqnarray}
 \underset{(f,\eta)\in \mathcal{S}^*}{\sup} (R-R^*)\geq  \mathbb{E}_V [R-R^*]\geq  \text{P}(V\neq \hat{V})m\overset{(a)}{\geq} \frac{1}{2}\text{P}(n=0) m= \frac{1}{2}\left(1-\frac{1}{m}\right)^N m,
\label{eq:nounif}
\end{eqnarray}
in which (a) is true because if there are no points falling in $[m,m+1]$, then for any detector $\hat{v}$, the conditional error probability given $n=0$ is $1/2$.

\eqref{eq:nounif} shows that if we pick a $\delta>0$, then for given $N$, we can find a $m$, such that $\underset{(f,\eta)\in \mathcal{S}}{\sup} R>\delta$. 

To generalize the above analysis to arbitrary fixed dimension, we only need to let $f(x_1)$ to replace $f(x)$ in \eqref{eq:expf}. Then let $X_2,\ldots,X_d$ follow uniform distribution in $[0,1]$ and $X_1,\ldots,X_d$ be independent. In this case, we can still get \eqref{eq:nounif}. Hence we claim that no regression method can be uniformly consistent without Assumption 5 (b').
\section{Proof of Theorem \ref{thm:stdunbounded}: Convergence rate of the standard kNN regression with unbounded $\eta$}\label{sec:std-u}
Note that from Assumption 5 (b'), we can show that
\begin{eqnarray}
\text{P}(f(\mathbf{X})<t)=\text{P}(e^{-bf(\mathbf{X})}>e^{-bt})\leq \underset{b}{\inf} e^{bt} \mathbb{E}[e^{-bf(\mathbf{X})}]=\underset{b}{\inf} e^{bt} t^{-\beta'}\leq et^{\beta'},
\label{eq:tail-u}
\end{eqnarray}
in which we let $b=1/t$ in the last step. Thus we recovered Assumption 1(b), except that $\beta$ is replaced by $\beta'$.

We still discuss two cases: Case 1: $f(\mathbf{x})>2k/(NC_dv_dD^d)$ and $\rho<D$; and Case 2: $f(\mathbf{x})\leq 2k/(NC_dv_dD^d)$ or $\rho\geq D$. Similarly, define a random variable $E$, for which $E=1$ for case 1 and $E=2$ for case 2.

\noindent\textbf{Case 1}. The analysis in Appendix \ref{sec:std-b} still holds here, because for any $\mathbf{x}'\in B(\mathbf{x},D)$, we have
\begin{eqnarray}
\eta(\mathbf{x}')-\eta(\mathbf{x})\leq L\norm{\mathbf{x}'-\mathbf{x}}\leq LD.
\end{eqnarray}
We can then use $LD$ to replace $M$ in Appendix \ref{sec:std-b}, and we can then get the same upper bound of the risk up to a constant factor. Hence
\begin{eqnarray}
\mathbb{E}[(g(\mathbf{X})-\eta(\mathbf{X}))^2 \mathbf{1}(E=1)]=\left\{
\begin{array}{ccc}
\mathcal{O}\left(\left(\frac{k}{N}\right)^\frac{4}{d}\right)+\mathcal{O}\left(\left(\frac{k}{N}\right)^{\beta'}\right)+\mathcal{O}\left(\frac{1}{k}\right) &\text{if} & {\beta'}\neq \frac{4}{d}\\
\mathcal{O}\left(\left(\frac{k}{N}\right)^{\beta'} \ln \frac{N}{k}\right)+\mathcal{O}\left(\frac{1}{k}\right) &\text{if} & {\beta'}=\frac{4}{d}.
\end{array}
\right.
\end{eqnarray}

\noindent\textbf{Case 2}. We conduct bias and variance decomposition again.
\begin{eqnarray}
\mathbb{E}[(g(\mathbf{x})-\eta(\mathbf{x}))^2 |\rho]=(\mathbb{E}[g(\mathbf{x})|\rho]-\eta(\mathbf{x}))^2+\Var[g(\mathbf{x})|\rho].
\label{eq:mse-u}
\end{eqnarray}
For the first term, i.e., the bias term, we have
\begin{eqnarray}
|\mathbb{E}[g(\mathbf{x})|\rho]-\eta(\mathbf{x})|= \left|\mathbb{E}\left[\left.\frac{1}{k}\sum_{i=1}^k \eta(\mathbf{X}^{(i)})\right\vert\rho \right]-\eta(\mathbf{x})\right|\leq  \frac{1}{k} \sum_{i=1}^k |\mathbb{E}[\eta(\mathbf{X}^{(i)})|\rho]-\eta(\mathbf{x})|\leq L\rho,
\label{eq:bias-u}
\end{eqnarray}
in which the last step uses $|\eta(\mathbf{X}^{(i)})-\eta(\mathbf{x})|\leq L\norm{\mathbf{X}^{(i)}-\mathbf{x}}\leq L\rho$.

Now we give a bound to the variance term:
\begin{eqnarray}
\Var[g(\mathbf{x})|\rho]&=&\Var\left[\left.\frac{1}{k} \sum_{i=1}^k Y^{(i)}\right\vert\rho\right]\nonumber\\
&=&\Var\left[\left.\frac{1}{k} \sum_{i=1}^k Y^{(i)} \right|\rho,\mathbf{X}^{(1)},\ldots,\mathbf{X}^{(k)}\right]+\Var\left[\left.\frac{1}{k}\sum_{i=1}^k \eta(\mathbf{X}^{(i)})\right\vert\rho\right]\nonumber\\
&\leq & \frac{C_a}{k}+\frac{1}{k}\sum_{i=1}^k \Var[\eta(\mathbf{X}^{(i)})|\rho].
\end{eqnarray}
In the last step, we use Assumption 1 (a) in the first term, and Cauchy inequality in the second term. Moreover,
\begin{eqnarray}
\Var[\eta(\mathbf{X}^{(i)})|\rho]\leq \mathbb{E}[(\eta(\mathbf{X}^{(i)})-\eta(\mathbf{X}))^2|\rho]\leq L^2 \mathbb{E}\left[\left.\norm{\mathbf{X}^{(i)}-\mathbf{x}}^2\right\vert\rho\right]\leq  L^2 \rho^2.
\label{eq:var-u0}
\end{eqnarray}
Hence
\begin{eqnarray}
\Var[g(\mathbf{x})|\rho]\leq C_a+L^2\rho^2.
\label{eq:var-u}
\end{eqnarray}
From \eqref{eq:mse-u}, \eqref{eq:bias-u} and \eqref{eq:var-u}, we get
\begin{eqnarray}
\mathbb{E}[(g(\mathbf{x})-\eta(\mathbf{x}))^2|\rho]\leq C_a+2L^2\rho^2.
\label{eq:errx}
\end{eqnarray}
Let $\rho_0$ be the $(k+1)$-th nearest neighbor distance of $\mathbf{x}=\mathbf{0}$. Then there are $k$ points in $B(\mathbf{0},\rho_0)$. Since $B(\mathbf{0},\rho_0)\subset B(\mathbf{x},\norm{\mathbf{x}+\rho_0})$, $B(\mathbf{x},\norm{\mathbf{x}+\rho_0})$ contains at least $k$ points. Hence
\begin{eqnarray}
\rho=\norm{\mathbf{X}^{(k+1)}-\mathbf{x}}\leq \norm{\mathbf{x}}+\rho_0.
\label{eq:rhoexp}
\end{eqnarray}
From Assumption 5, we know that there exists a constant $M_X$ such that $\mathbb{E}[\norm{\mathbf{X}}^2]<M_X<\infty$. Given this, we have the following lemma. 
\begin{lem}\label{lem:rho0}
	For some constant $C_1$ and sufficiently large $N$,	$\mathbb{E}[\rho_0^2]\leq C_1.$
\end{lem}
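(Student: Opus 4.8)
The plan is to avoid any concentration/tail-integration machinery entirely and instead recognize that $\rho_0^2$ is just an order statistic of the squared norms of the training samples, which can be controlled by a one-line averaging bound. Concretely, set $Z_i=\norm{\mathbf{X}_i}^2$ for $i=1,\ldots,N$. Since the query point is $\mathbf{x}=\mathbf{0}$, the distance from the origin to $\mathbf{X}_i$ is exactly $\norm{\mathbf{X}_i}$, so $\rho_0$ is the $(k+1)$-th smallest of $\norm{\mathbf{X}_1},\ldots,\norm{\mathbf{X}_N}$. Because $t\mapsto t^2$ is increasing on $[0,\infty)$, squaring preserves the ordering, and hence $\rho_0^2$ equals $Z_{(k+1)}$, the $(k+1)$-th smallest of the $Z_i$.

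First I would order the values as $Z_{(1)}\le \cdots \le Z_{(N)}$ and observe that the $N-k$ largest of them, namely $Z_{(k+1)},\ldots,Z_{(N)}$, are each at least $Z_{(k+1)}$. This yields the deterministic inequality
\begin{eqnarray}
\sum_{i=1}^N Z_i = \sum_{j=1}^N Z_{(j)} \ge \sum_{j=k+1}^N Z_{(j)} \ge (N-k)\,Z_{(k+1)},
\end{eqnarray}
so that $\rho_0^2 = Z_{(k+1)} \le \frac{1}{N-k}\sum_{i=1}^N Z_i$. Taking expectations, using linearity and the bound $\mathbb{E}[Z_i]=\mathbb{E}[\norm{\mathbf{X}}^2]\le M_X$ supplied by Assumption \ref{ass:unbounded} (b'), gives
\begin{eqnarray}
\mathbb{E}[\rho_0^2] \le \frac{1}{N-k}\sum_{i=1}^N \mathbb{E}[Z_i] \le \frac{N M_X}{N-k} = \frac{M_X}{1-k/N}.
\end{eqnarray}

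To finish, I would invoke the fact that the standard choice of $k$ in Theorem \ref{thm:stdunbounded} grows strictly slower than $N$, i.e. $k/N\to 0$, so there is an $N_0$ with $k\le N/2$ for all $N\ge N_0$, whence $\mathbb{E}[\rho_0^2]\le 2M_X$. Setting $C_1=2M_X$ then proves the claim for sufficiently large $N$.

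The only point requiring care is the dependence of $k$ on $N$: one must use $k/N\to 0$ to convert $M_X/(1-k/N)$ into an $N$-independent constant, which is precisely what the ``sufficiently large $N$'' in the statement supplies. I do not anticipate a genuine obstacle, since the order-statistic averaging sidesteps the more delicate alternative of bounding $\text{P}(\rho_0>r)$ by a binomial lower-tail probability and evaluating $\int_0^\infty 2r\,\text{P}(\rho_0>r)\,dr$; that route would force a Chernoff-type estimate together with a split into a bounded region $r\le R_0$ and a heavy-tail region $r>R_0$, and is strictly more work for the same conclusion.
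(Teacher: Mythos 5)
Your proof is correct, and it takes a genuinely different and more elementary route than the paper's. The paper bounds the tail directly: it applies Chebyshev to get $\text{P}(\norm{\mathbf{X}}>r)\leq M_X/r^2$, feeds this into a Chernoff bound on the number of samples outside $B(\mathbf{0},r)$ to obtain $\text{P}(\rho_0>r)\leq (2eM_X/r^2)^{N/2}$ for $r^2>2eM_X$, and then integrates $\int_0^\infty \text{P}(\rho_0^2>t)\,dt$ to conclude $\mathbb{E}[\rho_0^2]\leq 2eM_X+2/(N-2)$. You instead exploit the deterministic order-statistic inequality $Z_{(k+1)}\leq \frac{1}{N-k}\sum_{i=1}^N Z_i$ with $Z_i=\norm{\mathbf{X}_i}^2$ and finish by linearity of expectation, which requires no concentration machinery at all and even yields a marginally cleaner constant ($2M_X$ versus $2eM_X+2/(N-2)$). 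Both arguments rely on the same two ingredients from the surrounding context --- the second-moment bound $\mathbb{E}[\norm{\mathbf{X}}^2]\leq M_X$ from Assumption \ref{ass:unbounded} (b') and the fact that $k/N\to 0$ for the choices of $k$ in Theorem \ref{thm:stdunbounded} --- so your invocation of ``sufficiently large $N$'' to get $N-k\geq N/2$ is exactly parallel to step (a) in the paper's proof. The only thing the paper's heavier route buys is the exponentially decaying tail estimate $\text{P}(\rho_0>r)\leq (2eM_X/r^2)^{N/2}$ as a byproduct, which would control all higher moments of $\rho_0$; but since the lemma and its single downstream use in \eqref{eq:loss1} only need the second moment bounded by a constant, your argument is fully sufficient.
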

\begin{proof}
	Recall that $\rho_0$ is the $(k+1)$-th nearest neighbor distance of $\mathbf{x}=\mathbf{0}$. Since $\mathbb{E}[\norm{\mathbf{X}}^2]\leq M_X$, according to Chebyshev inequality,
	$\text{P}\left(\norm{\mathbf{X}}>r\right)\leq M_x/r^2.$ Therefore
	$\text{P}(B^c(\mathbf{0},r))\leq M_x/r^2$, in which $B^c(\mathbf{0},r)=\mathbb{R}^d\setminus B(\mathbf{0},r)$.
	Denote $n_r$ as the number of training samples in $B^c(\mathbf{0},r)$. For any $r>r_0>\sqrt{2M_X}$, we have $\text{P}(B^c(\mathbf{0},r))<1/2$. Hence for sufficiently large $N$,
	\begin{eqnarray}
	\text{P}(\rho_0>r)&=&\text{P}(n_r>N-k)\nonumber\\
	&\overset{(a)}{\leq} & \text{P}\left(n_r>\frac{1}{2}N\right)\nonumber\\
	&\overset{(b)}{\leq} & \exp[-N\text{P}(B^c(\mathbf{0},r))]\left(\frac{eN\text{P}(B^c(\mathbf{0},r))}{\frac{1}{2}N}\right)^\frac{N}{2}\nonumber\\
	&\leq &\left(2e\frac{M_X}{r^2}\right)^\frac{N}{2},
	\end{eqnarray}
	in which (a) holds because $k/N\rightarrow 0$, (b) comes from Chernoff inequality. Therefore
	\begin{eqnarray}
	\mathbb{E}[\rho_0^2]&=&\int_0^\infty \text{P}(\rho_0^2>t)dt\nonumber\\
	&=&\int_0^\infty \text{P}(\rho_0>\sqrt{t})dt\nonumber\\
	&=&\int_0^{2eM_X}\text{P}(\rho_0>\sqrt{t})dt+\int_{2eM_X}^\infty \text{P}(\rho_0>\sqrt{t})dt\nonumber\\
	&\leq &2eM_X +\int_{2eM_X}^\infty \left(\frac{2eM_X}{t}\right)^\frac{N}{2} dt\nonumber\\
	&=&2eM_X+\frac{2}{N-2}.
	\end{eqnarray}
	The proof of Lemma \ref{lem:rho0} is complete.
\end{proof}

From \eqref{eq:errx}, we get
\begin{eqnarray}
\mathbb{E}[(g(\mathbf{x})-\eta(\mathbf{x}))^2]&\leq &C_a+2L^2 \mathbb{E}[\rho^2|\mathbf{x}]\nonumber\\
&\overset{(a)}{\leq} & C_a+2L^2 (2\norm{\mathbf{x}}^2+2\mathbb{E}[\rho_0^2])\nonumber\\
&\leq& C_a+4L^2 (\norm{\mathbf{x}}^2+C_1)\nonumber\\
&\leq & 4L^2 \norm{\mathbf{x}}^2 +C_2,
\label{eq:loss1}
\end{eqnarray}
for some constant $C_2$. In (a), we used \eqref{eq:rhoexp} and Cauchy inequality.

Then
\begin{eqnarray}
\mathbb{E}[(g(\mathbf{X})-\eta(\mathbf{X}))^2\mathbf{1}(E=2)]
&\leq& \mathbb{E}\left[(g(\mathbf{X})-\eta(\mathbf{X}))^2 \mathbf{1}\left(f(\mathbf{X})\leq \frac{2k}{NC_dv_dD^d}\right) \right]\nonumber\\
&&+\mathbb{E}\left[(g(\mathbf{X})-\eta(\mathbf{X}))^2\mathbf{1}\left(f(\mathbf{X})>\frac{2k}{NC_dv_dD^d},\rho>D\right)\right].
\label{eq:case2-ub-exp}
\end{eqnarray}
The first term can be be bounded from \eqref{eq:loss1}:
\begin{eqnarray}
&&\mathbb{E}\left[(g(\mathbf{X})-\eta(\mathbf{X}))^2 \mathbf{1}\left(f(\mathbf{X})\leq \frac{2k}{NC_dv_dD^d}\right) \right]\nonumber\\
&\leq& C_2\text{P}\left(f(\mathbf{X})\leq \frac{2k}{NC_dv_dD^d}\right)+4L^2 \int  \mathbf{1}\left(f(\mathbf{x})\leq \frac{2k}{NC_dv_dD^d}\right) \norm{\mathbf{x}}^2 f(\mathbf{x})d\mathbf{x}=\mathcal{O}\left(\left(\frac{k}{N}\right)^{\beta'}\right),\nonumber\\
\label{eq:lowf}
\end{eqnarray}
in which the last step uses \eqref{eq:tail-u} to bound the first term, and Assumption \ref{ass:unbounded}(b') to bound the second term:
\begin{eqnarray}
\int \mathbf{1}\left(f(\mathbf{x})\leq \frac{2k}{NC_dv_dD^d}\right)\norm{\mathbf{x}}^2f(\mathbf{x})d\mathbf{x}\leq \int \exp\left[1-\frac{NC_dv_dD^d}{2k} f(\mathbf{x})\right]\norm{\mathbf{x}}^2 f(\mathbf{x})d\mathbf{x}=\mathcal{O}\left(\left(\frac{k}{N}\right)^{\beta'}\right).\nonumber
\end{eqnarray}
Now we bound the second term in \eqref{eq:case2-ub-exp}. In \eqref{eq:largerho}, we have proved that if $f(\mathbf{x})>2k/(NC_dv_dD^d)$, then $\text{P}(\rho>D|\mathbf{x})\leq \exp[-(1-\ln 2)k]$. Hence
\begin{eqnarray}
&&\mathbb{E}\left[(g(\mathbf{X})-\eta(\mathbf{X}))^2\mathbf{1}\left(f(\mathbf{X})>\frac{2k}{NC_dv_dD^d},\rho\geq D\right)\right]\nonumber\\
&\leq & \exp[-(1-\ln 2)k]\left[C_a+2L^2\int \mathbb{E}[\rho^2|\rho\geq D,\mathbf{x}] f(\mathbf{x})d\mathbf{x} \right],
\label{eq:largerho-u}
\end{eqnarray}
which decays faster than any polynomial. Combine \eqref{eq:lowf} and \eqref{eq:largerho-u}, \eqref{eq:lowf} dominates, i.e.
\begin{eqnarray}
\mathbb{E}[(g(\mathbf{X})-\eta(\mathbf{X}))^2\mathbf{1}(E=2)]=\mathcal{O}\left(\left(\frac{k}{N}\right)^{\beta'}\right).
\end{eqnarray}
Combining Case 1 and Case 2, we have
\begin{eqnarray}
R-R^*=\mathbb{E}[(g(\mathbf{X})-\eta(\mathbf{X}))^2]=\left\{
\begin{array}{ccc}
\mathcal{O}\left(\left(\frac{k}{N}\right)^{\min\{\beta',\frac{4}{d}\}} \right)+\mathcal{O}\left(\frac{1}{k}\right) &\text{if} & \beta'\neq \frac{4}{d}\\
\mathcal{O}\left(\left(\frac{k}{N}\right)^{\beta'} \ln \frac{N}{k}\right)+\mathcal{O}\left(\frac{1}{k}\right) &\text{if} & \beta'=\frac{4}{d}.
\end{array}
\right.
\end{eqnarray}
The fastest rate is attained if 
\begin{eqnarray}
k\sim \left\{
\begin{array}{ccc}
N^\frac{4}{d+4} & \text{if} & \beta'\geq \frac{4}{d}\\
N^{\frac{\beta'}{\beta'+1} }& \text{if} & \beta'<\frac{4}{d}
\end{array}.
\right.
\end{eqnarray}
The corresponding optimal convergence rate is
\begin{eqnarray}
R-R^*=\left\{
\begin{array}{ccc}
\mathcal{O}\left(N^{-\min\left\{\frac{4}{d+4},\frac{\beta'}{\beta'+1}\right\}}\right) &\text{if} & \beta'\neq\frac{4}{d}\\
\mathcal{O}\left(N^{-\frac{\beta'}{\beta'+1}}\ln N\right) &\text{if} & \beta'=\frac{4}{d}
\end{array}.
\right.
\end{eqnarray}
\section{Proof of Theorem \ref{thm:adaunbounded}: Convergence rate of the adaptive kNN regression with unbounded $\eta$}
In this section, we analyze the convergence rate of the adaptive kNN regression method when the regression function is not necessarily bounded. To obtain a bound on the convergence rate, we first consider three different events and then combine them. In particular, we consider:
\begin{itemize}
	\item Event 1: $h(\mathbf{x})>N^{-1}$, $n>N\text{P}(B(\mathbf{x},A))/2$ and $n>n_c$.
	\item Event 2: $n>n_c$, but $h(\mathbf{x})\leq N^{-1}$ or $n\leq N\text{P}(B(\mathbf{x},A))/2$.
	\item Event 3: $n\leq n_c$.
\end{itemize}
Similar to other sections, we define a random variable $E$, which will be equal to $i$ if $i$th event occurs. 

For the first two events, \eqref{eq:mse-b} in Appendix \ref{sec:ada-b} still holds, except that since Assumption \ref{ass:basic}(b) is replaced by \eqref{eq:tail-u}, $\beta$ is now replaced by $\beta'$:
\begin{eqnarray}
\mathbb{E}[(g(\mathbf{X})-\eta(\mathbf{X}))^2\mathbf{1}(E=1 \text{ or } E=2 )]=\left\{
\begin{array}{ccc}
\mathcal{O}(N^{-\beta'})+\mathcal{O}(N^{-2\lambda}) &\text{if} & \beta'\neq 2\lambda\\
\mathcal{O}(N^{-\beta'}\ln N) &\text{if} & \beta'=2\lambda,
\end{array}
\right.
\end{eqnarray}
in which
$\lambda=\min\left\{q/2, (2/d)(1-q) \right\}$.
Now we analyze Event 3. Note that when $n<n_c$, $k$ may be larger than $n$, thus some nearest neighbors may fall outside $B(\mathbf{x},D)$. Note that \eqref{eq:loss1} still holds here. Therefore
\begin{eqnarray}
\mathbb{E}[(g(\mathbf{X})-\eta(\mathbf{X}))^2\mathbf{1}(E=3)]
=\int 4L^2 \norm{\mathbf{x}}^2 \text{P}(n\leq n_c|\mathbf{x})f(\mathbf{x})d\mathbf{x}+C_2\int \text{P}(n\leq n_c|\mathbf{x})f(\mathbf{x})d\mathbf{x}.
\end{eqnarray}
Using similar argument as Appendix \ref{sec:ada-b}, we can show that the second term can be bounded by $\mathcal{O}(N^{-\beta'})$. Now we bound the first term. 
 If $n_c\leq N\text{P}(B(\mathbf{x},A))/2$, from \eqref{eq:smalln},
\begin{eqnarray}
\text{P}(n\leq n_c|\mathbf{x})\leq \exp\left[-\frac{1}{2}(1-\ln 2)NC_dv_dA^df(\mathbf{x})\right].
\end{eqnarray}
If $n_c>N\text{P}(B(\mathbf{x},A))/2$, then we just bound $\text{P}(n\leq n_c|\mathbf{x})$ with $1$. Therefore
\begin{eqnarray}
\int \norm{\mathbf{x}}^2\text{P}(n\leq n_c|\mathbf{x})f(\mathbf{x})d\mathbf{x}&\leq& \text{P}\left(\text{P}(B(\mathbf{X},A))\geq\frac{2n_c}{N}\right)\int \norm{\mathbf{x}}^2 \exp\left[-\frac{1}{2}(1-\ln 2)NC_dv_dA^d f(\mathbf{x})\right]d\mathbf{x}\nonumber\\
&&+\text{P}\left(\text{P}(B(\mathbf{X},A))<\frac{2n_c}{N}\right)\int \norm{\mathbf{x}}^2 f(\mathbf{x})d\mathbf{x}.
\label{eq:I31}
\end{eqnarray}
Both of two terms in \eqref{eq:I31} can be bounded by $\mathcal{O}(N^{-\beta'
})$. Therefore
\begin{eqnarray}
\mathbb{E}[(g(\mathbf{X})-\eta(\mathbf{X}))^2 \mathbf{1}(E=3)]=\mathcal{O}(N^{-\beta'}).
\end{eqnarray}
The overall convergence rate can be bounded by: 
\begin{eqnarray}
\mathbb{E}[(g(\mathbf{X})-\eta(\mathbf{X}))^2]=\left\{
\begin{array}{ccc}
\mathcal{O}(N^{-\beta'})+\mathcal{O}(N^{-\lambda}) &\text{if} & \beta'=\lambda\\
\mathcal{O}(N^{-\beta'}\ln N) &\text{if} & \beta'\neq \lambda.
\end{array}
\right.
\end{eqnarray}
The optimal convergence rate is attained when $q=4/(4+d)$. In this case,
\begin{eqnarray}
\mathbb{E}[(g(\mathbf{X})-\eta(\mathbf{X}))^2]=\left\{
\begin{array}{ccc}
\mathcal{O}\left(N^{-\min\left\{\beta',\frac{4}{d+4} \right\}}\right) &\text{if} & \beta'\neq \frac{4}{d+4}\\
\mathcal{O}(N^{-\beta'}\ln N) &\text{if} & \beta'=\frac{4}{d+4}.
\end{array}
\right.
\end{eqnarray}
\section{Technical Lemmas and Proofs}\label{sec:lemmas}
In this appendix, we state and prove some technical lemmas that are used in the proof of theorems. All of the following lemmas hold for both classification and regression problems.
\begin{lem}\label{lem:tail}
	(1) Under Assumption 1 (b), which says that $\text{P}(f(\mathbf{X})<t)\leq C_bt^\beta$, for any $p>0$ and $b>0$,
	\begin{eqnarray}
	\mathbb{E}[e^{-bf^p(\mathbf{X})}]\leq \frac{C_b\Gamma\left(1+\frac{\beta}{p}\right)}{b^{\frac{\beta}{p}}},
	\label{eq:tb2}
	\end{eqnarray}
	in which $\Gamma$ is the Gamma function defined in~\eqref{eq:Gamma}.
	
	(2)	For two sequences $r_N$, $s_N$ such that $r_N\rightarrow 0$ and $s_N\rightarrow 0$ as $N\rightarrow \infty$, and $r_N>s_N$ for sufficiently large $N$, then for any $p>0$, under Assumption \ref{ass:basic} (b)
	\begin{eqnarray}
	\mathbb{E}[f^{-p}(\mathbf{X})\mathbf{1}(s_N<f(\mathbf{X})<r_N)]=\left\{
	\begin{array}{ccc}
	\mathcal{O}\left(r_N^{\beta-p}\right) &\text{if} & \beta>p;\\
	\mathcal{O}\left(\ln \frac{r_N}{s_N}\right) &\text{if} & \beta=p;\\
	\mathcal{O}\left(s_N^{\beta-p}\right) &\text{if} & \beta<p,
	\end{array}
	\right.
	\label{eq:tb1}
	\end{eqnarray} 
	
	(3) For $\forall p>0$, and any sequence $\{s_N\}$ such that $s_N\rightarrow\infty$ as $N\rightarrow\infty$, then with Assumption \ref{ass:basic} (b), 
	\begin{eqnarray}
	\mathbb{E}[f^{-p}(\mathbf{X})\mathbf{1}(f(\mathbf{X})>s_N))]=\left\{
	\begin{array}{ccc}
	\mathcal{O}(1) &\text{if} & \beta>p\\
	\mathcal{O}\left(\ln \frac{1}{s_N} \right) &\text{if} & \beta=p\\
	\mathcal{O}\left(s_N^{\beta-p}\right) &\text{if} & \beta<p.
	\end{array}
	\right.
	\label{eq:tb3}
	\end{eqnarray}
	
	(4) With Assumption \ref{ass:additional}, the upper bounds of \eqref{eq:tb2}, \eqref{eq:tb1} and \eqref{eq:tb3} also holds for $h(\mathbf{X})$.
\end{lem}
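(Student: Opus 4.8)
The plan is to derive all four parts from the single tail hypothesis $\text{P}(f(\mathbf{X})<t)\le C_b t^\beta$ by means of the layer-cake (tail-integral) representation $\mathbb{E}[\phi(f(\mathbf{X}))]=\int_0^\infty \text{P}(\phi(f(\mathbf{X}))>u)\,du$, valid for any nonnegative $\phi$. For part (1), observe that $e^{-bf^p(\mathbf{X})}>u$ is equivalent to $f(\mathbf{X})<(b^{-1}\ln(1/u))^{1/p}$ when $0<u<1$, and has probability zero when $u\ge 1$. Hence
\begin{eqnarray}
\mathbb{E}[e^{-bf^p(\mathbf{X})}]=\int_0^1 \text{P}\left(f(\mathbf{X})<\left(\frac{\ln(1/u)}{b}\right)^{1/p}\right)du\le C_b\int_0^1 \left(\frac{\ln(1/u)}{b}\right)^{\beta/p}du,
\end{eqnarray}
and the substitution $u=e^{-s}$ turns the last integral into $b^{-\beta/p}\int_0^\infty s^{\beta/p}e^{-s}\,ds=b^{-\beta/p}\Gamma(1+\beta/p)$, which is exactly \eqref{eq:tb2}.

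For part (2), I would apply the same representation to $Z=f^{-p}(\mathbf{X})\mathbf{1}(s_N<f(\mathbf{X})<r_N)$. The event $\{Z>v\}$ equals $\{s_N<f(\mathbf{X})<\min(r_N,v^{-1/p})\}$, which is constant in $v$ for $v<r_N^{-p}$ and empty for $v>s_N^{-p}$, so that
\begin{eqnarray}
\mathbb{E}[Z]\le r_N^{-p}\,\text{P}(s_N<f(\mathbf{X})<r_N)+C_b\int_{r_N^{-p}}^{s_N^{-p}} v^{-\beta/p}\,dv.
\end{eqnarray}
The first term is at most $C_b r_N^{\beta-p}$, and the remaining integral is elementary; the three cases of \eqref{eq:tb1} correspond exactly to $\beta/p>1$, $\beta/p=1$, and $\beta/p<1$. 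When $\beta>p$ the integral scales like the upper-endpoint value $r_N^{\beta-p}$, when $\beta=p$ it produces the logarithm $\ln(r_N/s_N)$, and when $\beta<p$ it scales like the lower-endpoint value $s_N^{\beta-p}$; since $s_N<r_N$ with both tending to $0$, these are the dominant contributions in each regime and absorb the boundary term.

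Part (3) then follows by splitting $\{f(\mathbf{X})>s_N\}$ at a fixed constant, say $1$: on $\{f(\mathbf{X})\ge 1\}$ one has $f^{-p}\le 1$, contributing $\mathcal{O}(1)$, while the piece $\{s_N<f(\mathbf{X})<1\}$ is precisely the estimate of part (2) with $r_N$ replaced by the constant $1$, which reproduces \eqref{eq:tb3}. Finally, part (4) is immediate once I note that, writing $h(\mathbf{x})=\big(f(\mathbf{x})/\text{P}^q(B(\mathbf{x},A))\big)^{1/(1-q)}$, the event $\{h(\mathbf{X})<t\}$ coincides with $\{f(\mathbf{X})/\text{P}^q(B(\mathbf{X},A))<t^{1-q}\}$; by Assumption \ref{ass:additional} (equivalently \eqref{eq:smallh}) this has probability at most $C_b't^\beta$, so $h$ obeys the same tail bound as $f$ with $C_b$ replaced by $C_b'$, and the proofs of (1)--(3) transfer verbatim.

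The main obstacle is not conceptual but bookkeeping: in each of the three regimes $\beta\gtrless p$ one must track carefully which of the two endpoints $r_N,s_N$ dominates the elementary integral, and verify that the constant boundary term from the layer-cake decomposition is absorbed into that dominant contribution rather than overwhelming it. Everything else reduces to the single hypothesis $\text{P}(f(\mathbf{X})<t)\le C_b t^\beta$ and routine integration, so no deeper difficulty is expected.
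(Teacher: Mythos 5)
Your proof is correct and follows essentially the same route as the paper's: the tail-integral (layer-cake) representation combined with the hypothesis $\text{P}(f(\mathbf{X})<t)\leq C_b t^\beta$ for parts (1)--(3), and the observation that Assumption \ref{ass:additional} gives $h$ the same tail bound for part (4). If anything you are slightly more careful than the paper in part (2), where you explicitly retain and absorb the boundary contribution $r_N^{-p}\,\text{P}(s_N<f(\mathbf{X})<r_N)$ that the paper's displayed computation silently drops.
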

\begin{proof}
	(1) Proof of \eqref{eq:tb2}:
	\begin{eqnarray}
	\mathbb{E}[e^{-bf^p(\mathbf{X})}]=\int_0^1 \text{P}\left(e^{-bf^p(\mathbf{X})}>t\right) dt=\int_0^1 \text{P}\left(f(\mathbf{X})<\left(\frac{\ln\frac{1}{t}}{b}\right)^\frac{1}{p}\right)dt &\leq& C_b\int_0^1 \left(\frac{\ln \frac{1}{t}}{b}\right)^\frac{\beta}{p} dt\nonumber\\
	&=&\frac{C_b\Gamma \left(1+\frac{\beta}{p}\right)}{b^\frac{\beta}{p}}.
	\end{eqnarray}
	
	(2) Proof of \eqref{eq:tb1}:
		\begin{eqnarray}
	\mathbb{E}[f^{-p}(\mathbf{X})\mathbf{1}(s_N<f(\mathbf{X})<r_N)]&=&\int_0^\infty \text{P}\left(f^{-p}(\mathbf{X})\mathbf{1}(s_N<f(\mathbf{X})<r_N)>t\right)dt\nonumber\\
	&=&\int_{r_N^{-p}}^{s_N^{-p}} \text{P}(f^{-p}(\mathbf{X})>t)dt\nonumber\\
	&=&\int_{r_N^{-p}}^{s_N^{-p}} \text{P}(f(\mathbf{X})<t^{-\frac{1}{p}})dt\nonumber\\
	&\leq &\int_{r_N^{-p}}^{s_N^{-p}}C_b t^{-\frac{\beta}{p}}dt,
	\end{eqnarray}
	in which the last step comes from Assumption 1(b). We then obtain \eqref{eq:tb1} by simple integral for cases with $\beta>p$, $\beta=p$ and $\beta<p$ separately.
	
	(3) \eqref{eq:tb3} can be proved in similar way as the proof of \eqref{eq:tb1}. We omit the proof for simplicity.
	
	(4) \eqref{eq:smallh} has the same form as Assumption \ref{ass:basic}(b). Thus the above derivation also holds for $h(\mathbf{X})$.
\end{proof}
\begin{lem}\label{lem:conc}
	(1) The expectation of $\hat{\eta}(\mathbf{x})$ is:
	\begin{eqnarray}
	\mathbb{E}[\hat{\eta}(\mathbf{x})|\rho]=\eta(B(\mathbf{x},\rho));
	\label{eq:etaexp}
	\end{eqnarray}
	(2) $\hat{\eta}(\mathbf{x})$ satisfies the following concentration inequality:
	\begin{eqnarray}
	\text{P}(|\hat{\eta}(\mathbf{x})-\mathbb{E}[\hat{\eta}(\mathbf{x})]|>t)\leq 2e^{-\frac{1}{2}kt^2}.
	\label{eq:etaconc}
	\end{eqnarray}
\end{lem}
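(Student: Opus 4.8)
The plan is to prove both parts by conditioning on $\rho$, the distance from $\mathbf{x}$ to its $(k+1)$-th nearest neighbor, since this is precisely the device that restores independence among the ordered neighbors. First I would record the structural fact that does all the work: because the $N$ samples are i.i.d.\ and the feature distribution is continuous (so ties among distances occur with probability zero), conditioned on the value of $\rho$ the $k$ points lying inside $B(\mathbf{x},\rho)$ are distributed as $k$ independent draws from the truncated law $f(\cdot\mid \mathbf{X}\in B(\mathbf{x},\rho))$, and their labels are generated independently through the conditional label distribution. Consequently the pairs $(\mathbf{X}^{(1)},Y^{(1)}),\ldots,(\mathbf{X}^{(k)},Y^{(k)})$ are conditionally i.i.d.\ given $\rho$; in particular the labels $Y^{(1)},\ldots,Y^{(k)}$ are conditionally i.i.d.

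For part (1), I would apply the tower rule: $\mathbb{E}[Y^{(i)}\mid \rho]=\mathbb{E}[\mathbb{E}[Y^{(i)}\mid \mathbf{X}^{(i)},\rho]\mid \rho]=\mathbb{E}[\eta(\mathbf{X}^{(i)})\mid \rho]$, and since $\mathbf{X}^{(i)}\sim f(\cdot\mid \mathbf{X}\in B(\mathbf{x},\rho))$ this equals $\int_{B(\mathbf{x},\rho)}\eta(\mathbf{u})f(\mathbf{u})\,d\mathbf{u}\,/\,\text{P}(B(\mathbf{x},\rho))=\eta(B(\mathbf{x},\rho))$ by the definition $\eta(B(\mathbf{x},r)):=\mathbb{E}[Y\mid \mathbf{X}\in B(\mathbf{x},r)]$. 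Averaging over $i=1,\ldots,k$ then yields $\mathbb{E}[\hat{\eta}(\mathbf{x})\mid \rho]=\eta(B(\mathbf{x},\rho))$, which is exactly \eqref{eq:etaexp}.

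For part (2), the only obstacle is the dependence among the $Y^{(i)}$, which the conditioning on $\rho$ again removes. Given $\rho$, the variables $Y^{(1)},\ldots,Y^{(k)}$ are i.i.d.\ and, in the classification case, bounded in $[-1,1]$, with common mean $\eta(B(\mathbf{x},\rho))=\mathbb{E}[\hat{\eta}(\mathbf{x})\mid\rho]$ established in part (1). The classical Hoeffding inequality for an average of $k$ independent variables each supported on an interval of length $2$ then gives $\text{P}(|\hat{\eta}(\mathbf{x})-\mathbb{E}[\hat{\eta}(\mathbf{x})\mid\rho]|>t\mid\rho)\le 2\exp(-\tfrac{1}{2}kt^2)$, which is the claimed bound \eqref{eq:etaconc}, with the expectation understood conditionally on $\rho$ as it is applied throughout the proofs (e.g.\ in the derivation of \eqref{eq:errprob2}). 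Since this bound is uniform in $\rho$, it survives integration over $\rho$ as well.

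The main subtlety to get right is the claim that conditioning on the $(k+1)$-th nearest-neighbor distance makes the $k$ interior points i.i.d.\ from the truncated distribution. I would justify this by noting that, given that exactly $k$ of the $N$ i.i.d.\ points fall in the open ball $B(\mathbf{x},\rho)$ with one on its boundary, the locations of those $k$ interior points are exchangeable and each carries the truncated density, the continuity of the feature distribution ruling out ties with probability one; once this is in place, everything else is the standard bounded-difference content of Hoeffding's inequality and requires no further work.
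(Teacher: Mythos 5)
Your proposal is correct and follows essentially the same route as the paper: the paper also reduces to conditional independence given the $(k+1)$-th nearest-neighbor distance $\rho$ (via an explicit resampling construction that first draws $\rho$, then the $k$ interior points i.i.d.\ from $f(\cdot\mid\mathbf{X}\in B(\mathbf{x},\rho))$, then the remaining points, and permutes), after which \eqref{eq:etaexp} follows from the tower rule and \eqref{eq:etaconc} from Hoeffding's inequality for averages of $[-1,1]$-valued variables. Your reading of \eqref{eq:etaconc} as a bound conditional on $\rho$, uniform in $\rho$, is also how the paper uses it.
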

\begin{proof}
	Our proof of Lemma \ref{lem:conc} follows the proof of Lemma 9 in \cite{chaudhuri2014rates}. We pick $(\mathbf{X}_i,Y_i)$ in the following way: firstly, pick a point $X_1$ according to the marginal distribution of $(k+1)$-th nearest neighbor of $\mathbf{x}$. Denote $\rho$ as the distance. Then pick $k$ points from conditional distribution $f(\cdot|\mathbf{X}\in B(\mathbf{x},\rho))$. Then pick $(N-k-1)$ points from the conditional distribution $f(\cdot|\mathbf{X} \notin B(\mathbf{x},\rho))$. The next step is to randomly permute these $N$ points. The joint distribution of these $N$ points obtained in this way are i.i.d, with pdf $f(\mathbf{x})$. Finally, assign all of the $N$ points with label $1$ or $-1$, with probability $\text{P}(Y_i=1|\mathbf{X}_i)=\frac{1}{2}(1+|\eta(\mathbf{X}_i)|)$.
	
	Note that the $k$ points picked according to distribution $f(\cdot|\mathbf{X}\in B(\mathbf{x},\rho))$ are i.i.d, and the expectation of the target is $\eta(B(\mathbf{x},\rho))$. This yields \eqref{eq:etaexp}. Besides, according to Hoeffding's inequality, we get \eqref{eq:etaconc}.
\end{proof}
\begin{lem}\label{lem:order}
	\begin{eqnarray}
	\mathbb{E}[\text{P}^\frac{4}{d}(B(\mathbf{x},\rho))]\leq \frac{\left(k+\frac{4}{d}+1\right)^\frac{4}{d}}{N^\frac{4}{d}}.
	\end{eqnarray}
\end{lem}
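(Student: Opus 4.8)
The plan is to reduce the statement to a single moment computation for a Beta random variable and then to control the resulting ratio of Gamma functions by elementary bounds on the digamma function. Write $s=4/d>0$ and set $U:=\text{P}(B(\mathbf{x},\rho))$, the probability mass of the ball whose boundary passes through the $(k+1)$-th nearest neighbor of $\mathbf{x}$.

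First I would establish the distribution of $U$. Mapping each training point by $\mathbf{X}_i\mapsto \text{P}(B(\mathbf{x},\norm{\mathbf{X}_i-\mathbf{x}}))$ sends the i.i.d.\ sample to $N$ i.i.d.\ $\text{Uniform}(0,1)$ variables (this uses only continuity of the feature distribution, the same fact already invoked around \eqref{eq:probratio} and in the lower-bound construction for \eqref{eq:reglb2}). Since $\rho$ is the distance to the $(k+1)$-th nearest neighbor, $U$ is exactly the $(k+1)$-th order statistic of these uniforms, so $U\sim\text{Beta}(k+1,\,N-k)$. The standard Beta moment formula then gives
\begin{equation}
\mathbb{E}\!\left[U^{s}\right]=\frac{B(k+1+s,\,N-k)}{B(k+1,\,N-k)}=\frac{\Gamma(k+1+s)}{\Gamma(k+1)}\cdot\frac{\Gamma(N+1)}{\Gamma(N+1+s)},
\end{equation}
which reduces the lemma to showing $\Gamma(k+1+s)/\Gamma(k+1)\le (k+1+s)^{s}$ and $\Gamma(N+1)/\Gamma(N+1+s)\le N^{-s}$.

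I would prove both Gamma-ratio bounds uniformly in $s$ by writing $\log[\Gamma(x+s)/\Gamma(x)]=\int_0^s \psi(x+u)\,du$ and inserting the elementary digamma estimates $\psi(x)<\log x$ and $\psi(x)>\log(x-\tfrac12)$. For the numerator, $\int_0^s\psi(k+1+u)\,du<\int_0^s\log(k+1+u)\,du$, and a direct integration combined with $a\log(1+s/a)\le s$ (with $a=k+1$) collapses this to $s\log(k+1+s)$, giving the first bound. For the denominator, $\int_0^s\psi(N+1+u)\,du>\int_0^s\log(N+\tfrac12+u)\,du\ge s\log N$, which yields $\Gamma(N+1+s)/\Gamma(N+1)>N^{s}$, i.e.\ the second bound. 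Multiplying the two estimates produces exactly $(k+\tfrac4d+1)^{4/d}/N^{4/d}$.

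The only genuinely delicate point is this last step: when $d\ge 4$ one could finish immediately by Jensen's inequality, since $t\mapsto t^{4/d}$ is then concave and $\mathbb{E}[U]=(k+1)/(N+1)$ already gives the bound; but for $d<4$ the exponent $4/d$ exceeds $1$, concavity fails, and a non-asymptotic control of the Gamma ratio valid for all $s>0$ is required. The digamma argument above is what I expect to carry that case, and keeping the constants in the clean form $(k+1+s)^s$ and $N^{-s}$ (rather than the sharper but messier log-convexity constant $(k+1)^s$) is what matches the stated right-hand side.
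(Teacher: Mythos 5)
Your proposal is correct and follows essentially the same route as the paper: identify $U=\text{P}(B(\mathbf{x},\rho))$ as a $\text{Beta}(k+1,N-k)$ order statistic and express $\mathbb{E}[U^{4/d}]$ as the ratio $\frac{\Gamma(k+1+4/d)}{\Gamma(k+1)}\cdot\frac{\Gamma(N+1)}{\Gamma(N+1+4/d)}$. The only difference is that the paper asserts the final Gamma-ratio bound without justification, whereas you supply a valid digamma argument for it (needed precisely because $4/d>1$ when $d<4$, so Jensen does not apply); this is a welcome completion of a step the paper leaves implicit, not a different proof.
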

\begin{proof}
	Let random variable $U=\text{P}(B(\mathbf{x},\rho))$, using results from the order statistics \cite{david1970order}, we know that $U$ follows Beta distribution:
	$f(u)=(1/\text{Beta}(k+1,N-k)) u^k (1-u)^{N-k-1}$.
	Hence
	\begin{eqnarray}
	\mathbb{E}[\text{P}^{\frac{4}{d}}(B(\mathbf{x},\rho))] =\mathbb{E}[U^\frac{4}{d}]=\frac{\Gamma(N+1)}{\Gamma(N+\frac{4}{d}+1)} \frac{\Gamma\left(k+\frac{4}{d}+1\right)}{\Gamma(k+1)}\leq \frac{\left(k+\frac{4}{d}+1\right)^\frac{4}{d}}{N^\frac{4}{d}}.
	\label{eq:beta}
	\end{eqnarray}
\end{proof}
\begin{lem}\label{lem:mseadaptive}
	For adaptive kNN classification or regression, if $k\leq n$, then
	\begin{eqnarray}
	\mathbb{E}[(\hat{\eta}(\mathbf{x})-\eta(\mathbf{x}))^2|n]\leq C_M h^{-2\lambda}(\mathbf{x})N^{-2\lambda},
	\label{eq:mseadaptive}
	\end{eqnarray}
	in which $\lambda=\min\left\{2(1-q)/d,q/2 \right\}$, $C_M$ is a constant.
\end{lem}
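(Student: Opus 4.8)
The plan is to carry out a conditional bias--variance decomposition, first freezing the $(k+1)$-th nearest neighbor distance $\rho$ and then averaging over $\rho$ given $n$. Conditioning on $\rho$, Lemma \ref{lem:conc} gives $\mathbb{E}[\hat{\eta}(\mathbf{x})\mid\rho]=\eta(B(\mathbf{x},\rho))$, so that
\[
\mathbb{E}[(\hat{\eta}(\mathbf{x})-\eta(\mathbf{x}))^2\mid\rho]=\Var[\hat{\eta}(\mathbf{x})\mid\rho]+(\eta(B(\mathbf{x},\rho))-\eta(\mathbf{x}))^2 .
\]
I would control the variance exactly as in the computation leading to \eqref{eq:variance}: by the total law of variance together with a bounded conditional variance of $Y$ (Assumption \ref{ass:basic}(a)/\ref{ass:basicreg} in the regression case, with $|\eta|\le M$ from Assumption \ref{ass:bounded}; or $|Y|\le 1$ in classification) one obtains $\Var[\hat{\eta}(\mathbf{x})\mid\rho]=\mathcal{O}(1/k)$. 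The squared bias is handled by the smoothness Assumption \ref{ass:basic}(c), giving $(\eta(B(\mathbf{x},\rho))-\eta(\mathbf{x}))^2\le C_c^2\rho^4$. Averaging over $\rho$ given $n$ then leaves
\[
\mathbb{E}[(\hat{\eta}(\mathbf{x})-\eta(\mathbf{x}))^2\mid n]=\mathcal{O}\!\left(\frac{1}{k}\right)+C_c^2\,\mathbb{E}[\rho^4\mid n].
\]

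Next I would bound $\mathbb{E}[\rho^4\mid n]$ by an order-statistics argument. Because $k\le n$, all $k$ nearest neighbors lie inside $B(\mathbf{x},A)$, so, conditioning on the $n$ samples in the ball, the mass ratio $U:=\text{P}(B(\mathbf{x},\rho))/\text{P}(B(\mathbf{x},A))$ follows $\mathrm{Beta}(k+1,n-k)$. Assumption \ref{ass:basic}(d) turns distance into probability mass via $\rho^4\le (U\,\text{P}(B(\mathbf{x},A))/(C_dv_df(\mathbf{x})))^{4/d}$, and the Beta-moment estimate used in Lemma \ref{lem:order}, now with $n$ in place of $N$, gives $\mathbb{E}[U^{4/d}]=\mathcal{O}((k/n)^{4/d})$. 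Inserting $k\sim Kn^q$ from \eqref{eq:kdef} yields
\[
\mathbb{E}[(\hat{\eta}(\mathbf{x})-\eta(\mathbf{x}))^2\mid n]=\mathcal{O}\!\left(n^{-q}\right)+\mathcal{O}\!\left(\left(\frac{k\,\text{P}(B(\mathbf{x},A))}{n\,f(\mathbf{x})}\right)^{\!4/d}\right).
\]

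The crucial step is to re-express everything through $h(\mathbf{x})$ and $N$. Using the definition \eqref{eq:hdf}, i.e. $h^{1-q}(\mathbf{x})=f(\mathbf{x})/\text{P}^q(B(\mathbf{x},A))$, together with $k\sim Kn^q$, the bias term collapses to $\mathcal{O}((\text{P}(B(\mathbf{x},A))/(n\,h(\mathbf{x})))^{4(1-q)/d})$. Invoking the regime in which the lemma is applied, where $n\gtrsim N\text{P}(B(\mathbf{x},A))$ so that $\text{P}(B(\mathbf{x},A))/n\lesssim 1/N$, and the lower bound \eqref{eq:relation}, $\text{P}(B(\mathbf{x},A))\gtrsim h(\mathbf{x})$ so that $n^{-q}\lesssim (Nh(\mathbf{x}))^{-q}$, I would arrive at
\[
\mathbb{E}[(\hat{\eta}(\mathbf{x})-\eta(\mathbf{x}))^2\mid n]=\mathcal{O}\!\left((Nh(\mathbf{x}))^{-q}\right)+\mathcal{O}\!\left((Nh(\mathbf{x}))^{-4(1-q)/d}\right).
\]
Since $Nh(\mathbf{x})$ exceeds an absolute constant strictly larger than $1$ on the region where this lemma is used (recall $h(\mathbf{x})>C_0/N$ on $S_1\cup S_3$), the term with the larger exponent is dominated, and both terms collapse to the common rate governed by $2\lambda=\min\{q,4(1-q)/d\}$ of \eqref{eq:lamdef}, namely $h^{-2\lambda}(\mathbf{x})N^{-2\lambda}$, with all constants absorbed into $C_M$.

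I expect the main obstacle to be this final reduction. First, the bias must be tracked carefully as the two distinct exponents $q$ and $4(1-q)/d$ are re-expressed in terms of $h(\mathbf{x})$; second, and more delicately, one must justify the passage from the $n$-dependent bound to the $N$-dependent one, since the hypothesis $k\le n$ alone does not control $n^{-q}$. The argument therefore relies on $n\gtrsim N\text{P}(B(\mathbf{x},A))$, the regime in which the lemma is actually invoked (see \eqref{eq:phib2}); making this dependence explicit and verifying $Nh(\mathbf{x})\ge C_0>1$ is precisely what allows the two rates to be merged into a single $(Nh(\mathbf{x}))^{-2\lambda}$ factor.
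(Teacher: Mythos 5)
Your proposal is correct and follows essentially the same route as the paper's proof: the same conditional bias--variance decomposition, the variance bound of order $1/k$, the squared-bias bound $C_c^2\rho^4$ from Assumption \ref{ass:basic}(c), the Beta order-statistics moment bound with $n$ in place of $N$, Assumption \ref{ass:basic}(d) to convert mass to distance, and finally \eqref{eq:relation} together with $n\gtrsim N\text{P}(B(\mathbf{x},A))$ and $Nh(\mathbf{x})$ bounded below to merge the exponents into $2\lambda$. You also correctly identify that the hypothesis $k\le n$ alone does not suffice and that the Case-1 conditions under which the lemma is invoked must be imported; the paper's own proof does exactly this (``Under case 1, $n\geq N\text{P}(B(\mathbf{x},A))/2$'' and ``when case 1 happens, $h(\mathbf{x})>N^{-1}$''), so this is a fair reading rather than a gap.
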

\begin{proof}
\begin{eqnarray}
\mathbb{E}[(\hat{\eta}(\mathbf{x})-\eta(\mathbf{x}))^2|n,\rho]=(\mathbb{E}[\hat{\eta}(\mathbf{x})|n,\rho]-\eta(\mathbf{x}))^2+\Var[\hat{\eta}(\mathbf{x})|n,\rho].
\end{eqnarray}
Given $n$, $k$ is fixed, hence the second term has the same bound as \eqref{eq:varg}:
$\Var[\hat{\eta}(\mathbf{x})|n,\rho]\leq (C_a+M^2)/k$.
Besides, we have
$\mathbb{E}[\hat{\eta}(\mathbf{x})|n,\rho]=\eta(B(\mathbf{x},\rho))$.
According to Assumption 1 (c), 
$|\eta(B(\mathbf{x},\rho))-\eta(\mathbf{x})|\leq C_c\rho^2$.
Therefore
\begin{eqnarray}
\mathbb{E}[(\hat{\eta}(\mathbf{x})-\eta(\mathbf{x}))^2|n]\leq C_c^2 \mathbb{E}[\rho^{4}|n]+\frac{C_a+M^2}{k}.
\label{eq:phi}
\end{eqnarray}
We now bound $\mathbb{E}[\rho^{4}|n]$. $k\leq n$ implies $\rho<A$. Moreover, given $n$, the $n$ points in $B(\mathbf{x},A)$ are conditional i.i.d with pdf $f(\mathbf{x})/\text{P}(B(\mathbf{x},A))$. Use Lemma \ref{lem:order}, we have
\begin{eqnarray}
\mathbb{E}\left[\left.\frac{\text{P}^{\frac{4}{d}}(B(\mathbf{x},\rho))}{\text{P}^{\frac{4}{d}}(B(\mathbf{x},A))}\right|n\right]\leq \frac{\left(k+\frac{4}{d}+1\right)^\frac{4}{d}}{n^\frac{4}{d}}.
\label{eq:order2}
\end{eqnarray}
We have the following inequality that holds in general: for $a,b,c>0$,
\begin{eqnarray}
(a+b)^c\leq \left\{
\begin{array}{ccc}
2^{c-1}(a^c+b^c) &\text{if} & c>1\\
a^c+b^c & \text{if} & c\leq 1.
\end{array}
\right.
\end{eqnarray}
Hence
\begin{eqnarray}
\left(\frac{k+\frac{4}{d}+1}{n}\right)^\frac{4}{d}=\left(\frac{\left\lfloor Kn^q\right\rfloor+\frac{4}{d}+2}{n}\right)^\frac{4}{d}\leq  2^{\max\{\frac{4}{d}-1,0\} }\left(K^\frac{4}{d}n^{-\frac{4}{d}(1-q)}+\left(\frac{4}{d}+2\right)^\frac{4}{d} n^{-\frac{4}{d}}\right).\label{eq:abcbound}
\end{eqnarray}
Furthermore, according to Assumption 1 (d), $\text{P}(B(\mathbf{x},\rho))\geq C_dv_d\rho^df(\mathbf{x})$. Using this and~\eqref{eq:abcbound} in \eqref{eq:order2}, we obtain
\begin{eqnarray}
(C_dv_df(\mathbf{x}))^\frac{4}{d}\mathbb{E}[\rho^{4}|n]\leq 2^{\max\{\frac{4}{d}-1,0 \} }\left(K^\frac{4}{d}n^{-\frac{4}{d}(1-q)}+\left(\frac{4}{d}+2\right)^\frac{4}{d} n^{-\frac{4}{d}}\right)\text{P}^\frac{4}{d}(B(\mathbf{x},A)).
\label{eq:rhob1}
\end{eqnarray}
Under case 1, $n\geq N\text{P}(B(\mathbf{x},A))/2$. Plug it into \eqref{eq:rhob1}, and recall \eqref{eq:relation}, after some simplification, we eventually get:
\begin{eqnarray}
\mathbb{E}[\rho^{4}|n]\leq M_A h^{-\frac{4}{d}(1-q)}(\mathbf{x})N^{-\frac{4}{d}(1-q)}+M_Bh^{-\frac{4}{d}}(\mathbf{x})N^{-\frac{4}{d}},
\label{eq:rho4}
\end{eqnarray}
for some constants $M_A$ and $M_B$.

 Besides, note that the condition of case 1 says that $n>N\text{P}(B(\mathbf{x},A))/2$, therefore using \eqref{eq:relation},
\begin{eqnarray}
k=\left\lfloor Kn^q\right\rfloor +1\geq  K\left(\frac{1}{2}N\text{P}(B(\mathbf{x},A))\right)^q\geq 2^{-q} K^q (C_dv_dA^d)^\frac{q}{1-q} h^q(\mathbf{x})N^q.
\label{eq:klb}
\end{eqnarray}
\eqref{eq:phi}, \eqref{eq:rho4} and \eqref{eq:klb} yields
\begin{eqnarray}
\mathbb{E}[(\hat{\eta}(\mathbf{x})-\eta(\mathbf{x}))^2|n]\leq C_c^2 \left(M_A h^{-\frac{4}{d}(1-q)}(\mathbf{x})N^{-\frac{4}{d}(1-q)}+M_B h^{-\frac{4}{d}}(\mathbf{x})N^{-\frac{4}{d}}\right)+C_1 N^{-q} h^{-q}(\mathbf{x}),
\end{eqnarray}
for some constant $C_1$.

Moreover, when case 1 happens, $h(\mathbf{x})>N^{-1}$ always holds. Recall that $\lambda$ is defined as $\lambda:=\min\left\{2(1-q)/d,q/2 \right\}$,  for some constant $C_M$, which satisfies $C_M\leq C_c^2(M_A+M_B)+C_1$,
\begin{eqnarray}
\mathbb{E}[(\hat{\eta}(\mathbf{x})-\eta(\mathbf{x}))^2|n]\leq C_M h^{-2\lambda}(\mathbf{x})N^{-2\lambda}.
\end{eqnarray}
\end{proof}
\small \bibliography{macros,knn}
\bibliographystyle{ieeetran}
\end{document}